\def\resetMathstrut@{%
  \setbox\z@\hbox{%
    \mathchardef\@tempa\mathcode`\(\relax
    \def\@tempb##1"##2##3{\the\textfont"##3\char"}%
    \expandafter\@tempb\meaning\@tempa \relax
  }%
  \ht\Mathstrutbox@1.2\ht\z@ \dp\Mathstrutbox@1.2\dp\z@
}
\theoremstyle{definition}
\newtheorem{remark}{Remark}[section]
\newtheorem{definition}[remark]{Definition}
\newtheorem{notation}[remark]{Notation}
\newtheorem{example}[remark]{Example}
\newtheorem{conventions}[remark]{Conventions}
\theoremstyle{plain}
\newtheorem{theorem}[remark]{Theorem}
\newtheorem{lemma}[remark]{Lemma}
\newtheorem{factit}[remark]{Fact}
\newtheorem{corollary}[remark]{Corollary}
\newtheorem{Klingenberg}[remark]{Klingenberg's lemma}
\newcommand{\comment}[1]{}            % usage: {*** ignored text ***}
\newcommand{\scare}[1]{``#1''}        % for scare quotes
\newcommand{\mt}[1]{{\text{\rm #1}}}  % text in math mode
\newcommand{\dif}{\mt{d}}             % derivative-d
\newcommand{\diff}{\mathop{}\!\dif}   % derivative-d with preceding space (for integrals etc.)
\newcommand{\e}{\mathord{\mt{e}}}     % exp(1)
\newcommand{\eps}{\varepsilon}        % standard epsilon
\newcommand{\set}[1]{\left\{#1\right\}}
\newcommand{\bigset}[1]{\big\{#1\big\}}
\newcommand{\Set}[2]{\left\{#1\mathrel{}\middle|\mathrel{}#2\right\}}
\newcommand{\bigSet}[2]{\big\{#1\mathbin{\big|}#2\big\}}
\newcommand{\BigSet}[2]{\Big\{#1\mathbin{\Big|}#2\Big\}}
\newcommand{\eval}[2]{\left\langle#1,#2\right\rangle}     % scalar product
\newcommand{\abs}[1]{\left\lvert#1\right\rvert}  % absolute value
\newcommand{\smallabs}[1]{\lvert#1\rvert}
\newcommand{\bigabs}[1]{\big\lvert#1\big\rvert}
\newcommand{\Bigabs}[1]{\Big\lvert#1\Big\rvert}
\newcommand{\norm}[1]{\left\lVert#1\right\rVert} % norm
\newcommand{\bigrestrict}[2]{{#1\big|}_{#2}}
\newcommand{\compose}{\mathbin{\circ}}          % composition of maps: f°g
\newcommand{\define}{\mathrel{\rm:=}}
\renewcommand{\implies}{\mathrel{\Rightarrow}}
\newcommand{\AND}{\mathrel{\wedge}}
\newcommand{\without}{\setminus}
\newcommand{\blank}{\text{\textvisiblespace}}
\newcommand{\leer}{\emptyset}         % empty set
\newcommand{\R}{\mathbb{R}}           % real numbers
\newcommand{\N}{\mathbb{N}}           % natural numbers
\newcommand{\Z}{\mathbb{Z}}           % integers
\newcommand{\mfbd}{\partial}          % manifold boundary
\DeclareMathOperator{\diag}{\mt{diag}}         % diagonal matrix
\DeclareMathOperator{\Gr}{\mt{Gr}}             % Grassmannian
\newcommand{\ooi}[2]{\mathord{\left]#1,#2\right[}}  % open interval
\newcommand{\oci}[2]{\mathord{\left]#1,#2\right]}}  % left-open interval
\newcommand{\coi}[2]{\mathord{\left[#1,#2\right[}}  % right-open interval
\newcommand{\cci}[2]{\mathord{\left[#1,#2\right]}}
\newcommand{\smallcoi}[2]{\mathord{[#1,#2[}}  % right-open interval
\newcommand{\bigcci}[2]{\mathord{\big[#1,#2\big]}}
\newcommand{\smatrix}[1]{\left(\begin{smallmatrix} #1 \end{smallmatrix}\right)}
\renewcommand{\ln}{\operatorname{\mt{ln}}}
\renewcommand{\exp}{\operatorname{\mt{exp}}}
\renewcommand{\max}{\operatorname*{\mt{max}}}
\renewcommand{\min}{\operatorname*{\mt{min}}}
\renewcommand{\lim}{\operatorname*{\mt{lim}}}
\renewcommand{\sup}{\operatorname*{\mt{sup}}}
\renewcommand{\inf}{\operatorname*{\mt{inf}}}
\renewcommand{\dim}{\operatorname{\mt{dim}}}
\DeclareMathOperator{\id}{\mt{id}}             % identity map
\DeclareMathOperator{\pr}{\mt{pr}}             % projection 2
\newcommand{\const}{\mathord{\mt{const}}}      % constant map
\DeclareMathOperator{\Sym}{\mt{Sym}}
\DeclareMathOperator{\dom}{\mt{dom}}
\DeclareMathOperator{\End}{\mt{End}}
\newcommand{\kuno}{\varowedge}
\renewcommand{\sec}{\operatorname{\mt{sec}}}
\DeclareMathOperator{\Riem}{\mt{Riem}}
\DeclareMathOperator{\grad}{\mt{grad}}
\DeclareMathOperator{\Hess}{\mt{Hess}}
\newcommand{\SecondFF}{\operatorname{\mathit{II}}}    % second fundamental form
\DeclareMathOperator{\dist}{\mt{dist}}
\DeclareMathOperator{\length}{\mt{length}}
\newcommand{\eucl}{\mt{eucl}}
\newcommand{\jet}{\mt{j}}  % jet of a section
\DeclareMathOperator{\scal}{\mt{scal}}
\DeclareMathOperator{\conv}{\mt{conv}}
\DeclareMathOperator{\conj}{\mt{conj}}
\DeclareMathOperator{\inj}{\mt{inj}}
\newcommand{\Sph}{\mathbb{S}}
\newcommand{\T}{\mathbb{T}}
\DeclareMathOperator{\inte}{\mt{interior}}
\newcommand{\Eseq}{\mathcal{E}}
\newcommand{\Kex}{\mathcal{K}}
\newcommand{\Fol}{{\mathcal{F}}}
\newcommand{\Tens}{\mathord{\mathcal{T}}}
\newcommand{\PC}{\mathord{\mt{PC}}}
\newcommand{\Poly}[2]{\mathord{\R\mt{Poly}_{#1}^{#2}}}
\DeclareMathOperator{\Climbers}{\textsl{Climbers}}
\DeclareMathOperator{\Fct}{\mt{Fct}}
\newcommand{\Wick}{\mt{Wick}}
\begin{document}

\title{Every conformal class contains a metric of bounded geometry}

\author{Olaf M\"uller}
\address{Fakult\"at f\"ur Mathematik, Universit\"at Regensburg
%, Universit\"atsstr.~31, 93053 Regensburg, Germany
}
\email{olaf.mueller@mathematik.uni-regensburg.de}

\author{Marc Nardmann}
%\address{Department of Mathematics, University of Hamburg}
\address{Fachbereich Mathematik, Universit\"at Hamburg}
\email{marc.nardmann@math.uni-hamburg.de}

\begin{abstract}
We show that on every manifold, every conformal class of semi-Riemannian metrics contains a metric $g$ such that each $k$th-order covariant derivative of the Riemann tensor of $g$ has bounded absolute value $a_k$. This result is new also in the Riemannian case, where one can arrange in addition that $g$ is complete with injectivity and convexity radius $\geq1$. One can even make the radii rapidly increasing and the functions $a_k$ rapidly decreasing at infinity. We prove generalizations to foliated manifolds, where curvature, second fundamental form and injectivity radius of the leaves can be controlled similarly. Still more generally, we introduce the notion of a \scare{flatzoomer}: a quantity that involves arbitrary geometric structures and behaves suitably with respect to modifications by a function, e.g.\ a conformal factor. The results on bounded geometry follow from a general theorem about flatzoomers, which might be applicable in many other geometric contexts involving noncompact manifolds.
\end{abstract}

\maketitle

%%%%%%%%%%%%%%%%%%%%%%%%%%%%%%%%%%%%%%%%%%%%%%%%%%%%%%%%%%%%%%%%%%%%%%%%%%%%%%
\section{Introduction. Statement of results} \label{intro}
%%%%%%%%%%%%%%%%%%%%%%%%%%%%%%%%%%%%%%%%%%%%%%%%%%%%%%%%%%%%%%%%%%%%%%%%%%%%%%

A classical result due to R.\ E.\ Greene \cite{Greene} says that every manifold admits a Riemannian metric of bounded geometry. It is therefore natural to ask a more refined question: Which conformal classes of Riemannian metrics on a given manifold contain metrics of bounded geometry? The question is of course trivial on compact manifolds, because every metric there has bounded geometry. The problem on open manifolds has been considered by Eichhorn--Fricke--Lang \cite{EFL}, who proved that certain quite special conformal classes on manifolds of suitable topology contain metrics of bounded geometry. In the present article, we will show that on every manifold, \emph{each} conformal class of Riemannian metrics contains a metric of bounded geometry. We also state and prove generalizations to foliated Riemannian manifolds and to semi-Riemannian manifolds of arbitrary signature, but let us first discuss the plain Riemannian case.

\begin{conventions} \label{conventions}
$0\in\N$. Manifolds are pure-dimensional, second countable, without boundary, and real-analytic. (Recall that the real-analyticity assumption is no loss of generality: For $r\in\N_{\geq1}\cup\set{\infty}$, every maximal $C^r$-atlas contains a real-analytic subatlas, and every two such subatlases are real-analytically diffeo\-morphic; cf.\ e.g.\ \cite{Shiga}.) Semi-Riemannian metrics and foliations are $C^\infty$. A manifold-with-boundary may have an empty boundary. A compact exhaustion of a manifold $M$ is a sequence $(K_i)_{i\in\N}$ of compact subsets of $M$ with $\bigcup_{i\in\N}K_i=M$ such that each $K_i$ is contained in the interior of $K_{i+1}$. A compact exhaustion $(K_i)_{i\in\N}$ is \textbf{smooth} iff all $K_i$ are $C^\infty$ co\-di\-men\-sion-$0$ submanifolds-with-boundary of $M$.
\end{conventions}

\begin{definition} \label{defbounded}
Let $M$ be a manifold, let $k\in\N$, let $\eps,\iota\in C^0(M,\R_{>0})$. A Riemannian metric $g$ on $M$ \textbf{has $k$-geometry bounded by $(\eps,\iota)$} iff
\begin{itemize}
\item $\bigabs{\nabla_g^i\Riem_g}_g \leq \eps$ holds for every $i\in\set{0,\dots,k}$; and
\item for each $x\in M$, the injectivity radius $\inj_g(x)\in\oci{0}{\infty}$ of $g$ at the point $x$ is $\geq\iota(x)$.
\end{itemize}
Here $\nabla_g^i\Riem_g$ denotes the $i$th covariant derivative with respect to $g$ of the Riemann tensor $\Riem_g$. (It does not matter whether we consider $\Riem_g$ as a $(4,0)$-tensor or $(3,1)$-tensor; the resulting functions $\bigabs{\nabla_g^i\Riem_g}_g \in C^0(M,\R_{\geq0})$ are the same in both cases.)

\smallskip
Let $\Kex = (K_i)_{i\in\N}$ be a compact exhaustion of $M$, let $\Eseq=(\eps_i)_{i\in\N}$ be a sequence in $C^0(M,\R_{>0})$. A Riemannian metric $g$ on $M$ \textbf{has $(\infty,\Kex)$-geometry bounded by $(\Eseq,\iota)$} iff
\begin{itemize}
\item for every $i\in\N$, the inequality $\bigabs{\nabla_g^i\Riem_g}_g \leq \eps_i$ holds on $M\without K_i$;
\item $\inj_g\geq\iota$.
\end{itemize}

According to standard terminology, a Riemannian metric $g$ on $M$ \textbf{has bounded geometry} iff there exist a sequence $\Eseq=(\eps_i)_{i\in\N}$ of positive \emph{constants} and a \emph{constant} $\iota\in\R_{>0}$ such that
\begin{itemize}
\item for every $i\in\N$, the inequality $\bigabs{\nabla_g^i\Riem_g}_g \leq \eps_i$ holds on $M$; and
\item $\inj_g \geq \iota$.
\end{itemize}
\end{definition}

For the relation of our \scare{\emph{$k$-geometry}} terminology to notions involving derivatives of the metric coefficients with respect to normal coordinates, see \cite{Eichhorn}.

\begin{factit} \label{equiv}
Let $g$ be a Riemannian metric on a manifold $M$. The following statements are equivalent:
\begin{enumerate}[label=(\arabic*)]
\item\label{st1} $g$ has bounded geometry.
\item\label{st2} There exist a compact exhaustion $\Kex$ of $M$, a sequence $\Eseq=(\eps_i)_{i\in\N}$ of positive constants, and a constant $\iota\in\R_{>0}$ such that $g$ has $(\infty,\Kex)$-geometry bounded by $(\Eseq,\iota)$.
\end{enumerate}
\end{factit}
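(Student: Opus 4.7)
The statement asserts an equivalence; I would prove each direction separately. The direction \ref{st1}$\implies$\ref{st2} is essentially vacuous: given that $g$ has bounded geometry, the uniform constants $\eps_i$ and $\iota$ are supplied by definition, and I can choose the compact exhaustion $\Kex$ arbitrarily (e.g.\ using that every second-countable manifold admits a smooth compact exhaustion, which follows from paracompactness plus the existence of proper smooth functions, or from an exhaustion by geodesic balls in any complete background metric). The bound $\bigabs{\nabla_g^i\Riem_g}_g\leq\eps_i$ on $M\without K_i$ is then automatic from the global bound on all of $M$.

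The nontrivial direction is \ref{st2}$\implies$\ref{st1}. Assume $g$ has $(\infty,\Kex)$-geometry bounded by $(\Eseq,\iota)$ with positive constants $\eps_i$ and $\iota$. The injectivity radius condition $\inj_g\geq\iota$ is already global, so it transfers without change to \ref{st1}. The remaining task is to upgrade each bound $\bigabs{\nabla_g^i\Riem_g}_g\leq\eps_i$ on $M\without K_i$ to a uniform bound on all of $M$. For fixed $i$, the function $\bigabs{\nabla_g^i\Riem_g}_g\in C^0(M,\R_{\geq0})$ is continuous on the compact set $K_i$, so it attains a finite maximum $C_i$ there. Setting $\wt\eps_i\define\max\set{\eps_i,C_i}\in\R_{>0}$ yields a positive constant with $\bigabs{\nabla_g^i\Riem_g}_g\leq\wt\eps_i$ on all of $M$. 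The sequence $(\wt\eps_i)_{i\in\N}$ together with the constant $\iota$ then witnesses that $g$ has bounded geometry.

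There is no real obstacle here; the entire argument rests on the elementary fact that a continuous function on a compact set is bounded. The statement is really a remark clarifying that bounded geometry, as defined via global constants, can equivalently be certified by bounds that only need to hold outside an arbitrary compact exhaustion — a reformulation that will presumably be useful later in the paper because it is the form in which the construction of a bounded-geometry metric within a conformal class naturally yields estimates (one improves control derivative by derivative on larger and larger pieces, and compactness absorbs the remainder).
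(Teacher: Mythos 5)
Your proof is correct and follows exactly the paper's (very short) argument: choose an arbitrary compact exhaustion for \ref{st1}$\implies$\ref{st2}, and for \ref{st2}$\implies$\ref{st1} use that each continuous function $\bigabs{\nabla_g^i\Riem_g}_g$ is bounded on the compact set $K_i$ and hence on all of $M$. Your write-up merely spells out the compactness step that the paper leaves implicit.
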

\begin{proof}
\ref{st1}$\implies$\ref{st2} follows immediately from the fact that every manifold admits a compact exhaustion. \ref{st2}$\implies$\ref{st1} follows from the fact that a function on $M$ which is bounded on the complement of a compact set $K_i$ is bounded on $M$.
\end{proof}

Now we can state our main result for Rie\-mann\-ian metrics:

\begin{theorem} \label{main}
Let $\Kex=(K_i)_{i\in\N}$ be a smooth compact exhaustion of a manifold $M$, let $\iota,u_0\in C^0(M,\R_{>0})$, let $\Eseq$ be a sequence in $C^0(M,\R_{>0})$, let $g_0$ be a Riemannian metric on $M$. Then there exists a real-analytic function $u\colon M\to\R$ with $u>u_0$ such that the metric $\e^{2u}g_0$ is complete and has $(\infty,\Kex)$-geometry bounded by $(\Eseq,\iota)$.
\end{theorem}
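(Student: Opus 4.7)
The plan is to construct $u$ inductively along the given exhaustion $\Kex$, choosing it to grow fast enough from annulus to annulus that the conformal rescaling $g=\e^{2u}g_0$ simultaneously shrinks all the (finitely many) curvature norms and enlarges the injectivity radius over each annulus; real-analyticity will be arranged by an approximation step at the end. The guiding pointwise identity, obtained by inserting $g=\e^{2u}g_0$ into the standard conformal transformation formula for the curvature tensor and then contracting with $g^{-1}=\e^{-2u}g_0^{-1}$, is
\begin{equation*}
  \bigabs{\nabla_g^k\Riem_g}_g \;\leq\; \e^{-(k+2)u}\cdot P_k\bigl(u,\nabla_{g_0}u,\dots,\nabla_{g_0}^{k+2}u;\;\Riem_{g_0},\dots,\nabla_{g_0}^k\Riem_{g_0};\;g_0\bigr),
\end{equation*}
where $P_k$ is a universal polynomial expression. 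Its crucial feature is the exponentially small prefactor $\e^{-(k+2)u}$ set against only polynomial dependence on the derivatives of $u$: making $u$ large while keeping its derivatives polynomially controlled drives $\bigabs{\nabla_g^k\Riem_g}_g$ to zero. A parallel estimate (Rauch comparison for the conjugate radius, Klingenberg's loop argument for the injectivity radius proper) shows that under the same hypotheses $\inj_g(x)$ is bounded below by essentially $\e^{u(x)}$ times the $g_0$-conjugate radius at $x$.

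Concretely I work with the ansatz $u=\sum_{j\in\N}c_j\varphi_j$, where $(\varphi_j)_{j\in\N}$ is a fixed $C^\infty$ partition of unity with $\supp\varphi_j\subseteq\inte(K_{j+2})\without K_{j-1}$ (setting $K_i\define\leer$ for $i<0$), and $c_j\in\R_{>0}$ are constants to be determined. On the annulus $A_j\define K_{j+1}\without\inte(K_j)$ only $\varphi_{j-1},\varphi_j,\varphi_{j+1}$ are nonzero, so $u$ and all its covariant derivatives on $A_j$ depend only on $c_{j-1},c_j,c_{j+1}$. The $g_0$-data on $A_j$, the functions $u_0$ and $\iota$, the $\eps_0,\dots,\eps_j$ that must be obeyed on $A_j$ (recall that $\bigabs{\nabla_g^k\Riem_g}_g\leq\eps_k$ is required only on $M\without K_k$, hence on $A_j$ only for $k\leq j$), and the $C^{j+2}$-norms of the three relevant bump functions are all bounded on $A_j$ by explicit constants depending only on $j$. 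Choosing the $c_j$ by upward induction on $j$ so rapidly that $\e^{-(k+2)c_j}$ dominates every polynomial in $c_{j-1},c_j,c_{j+1}$ of degree bounded by a function of $j$ and $k$, simultaneously produces on $A_j$ the four desired conclusions: $u>u_0$; $\bigabs{\nabla_g^k\Riem_g}_g\leq\eps_k$ for all $k\leq j$; $\inj_g\geq\iota$; and $u\to\infty$ at infinity, which implies completeness of $g$ since the $g$-length of any properly divergent curve then diverges.

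The main technical obstacle is the recursive coupling between $c_j$ and $c_{j+1}$: pinning down $c_j$ seems to need knowledge of $c_{j+1}$, because on the transition zone between $A_j$ and $A_{j+1}$, $\nabla_{g_0}u$ is of order $c_{j+1}-c_j$ and higher covariant derivatives of $u$ are of order $c_{j+1}-c_j$ multiplied by fixed $C^k$-norms of the $\varphi$'s. The standard remedy is to specify in advance sufficiently fast-growing a priori upper bounds $b_j\in\R_{>0}$ for the $c_j$ (say, superexponential in $j$) and then choose $c_j\leq b_j$ inductively; the exponential-versus-polynomial gap in the identity above guarantees solvability at every step. Finally, each of the four inequalities listed above is open in the fine $C^\infty$-topology on $C^\infty(M,\R)$, so Grauert's real-analytic approximation theorem yields a real-analytic $u^\omega$ sufficiently close to the smooth $u$ just built that every condition still holds.
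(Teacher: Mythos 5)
Your overall strategy --- a conformal factor built along the exhaustion, essentially constant on each annulus and chosen inductively so that the prefactor $\e^{-(k+2)u}$ beats the derivative terms --- is the same as the paper's. But there is a genuine gap at exactly the step you wave through as ``the standard remedy'', and the paper flags this step explicitly (Remark \ref{alpineremark}). With the ansatz $u=\sum_j c_j\varphi_j$ for a \emph{fixed} partition of unity, the restriction of $u$ to a transition collar has the form $c_j+n\,\phi(t)$ with $n=c_{j+1}-c_j$ and a fixed profile $\phi$, i.e.\ the transition profiles are the linear rescalings $n\phi$ of one function. The quantity you must control there is $\max_t\e^{-\alpha(c_j+n\phi(t))}\bigl(1+n\smallabs{\phi'(t)}+\dots\bigr)$, and the factor $\max_t\e^{-\alpha n\phi(t)}\bigl(1+n\smallabs{\phi'(t)}\bigr)$ is \emph{not} bounded in $n$: at the point $t_n$ where $\phi(t_n)=1/n$ it is at least $\e^{-\alpha}\,\phi'(t_n)/\phi(t_n)$, which tends to $\infty$ (for instance like $\tfrac1\e\ln(n)^2$ when $\phi=\e^{-1/t}$ near $0$) because $\phi$ is flat at the endpoints and hence $\phi'/\phi$ is unbounded there. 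So ``exponential versus polynomial'' fails precisely on the transition zones; the only rescue is the residual factor $\e^{-\alpha c_j}$, and exploiting it makes the admissible jump $c_{j+1}-c_j$ depend on $c_j$, producing a backward-coupled chain of constraints of the shape $c_j\geq A_j\ln\ln c_{j+1}+B_j$ in which the $A_j,B_j$ are dictated by $\Eseq$, $\iota$ and the $g_0$-geometry near $K_{j+1}$. These are arbitrary, so the constraints need not be simultaneously satisfiable, and your a priori bounds $b_j$ do not help because they must obey the same backward recursion. The paper's fix is Lemma \ref{alpinist}: the linear rescalings $n\phi$ are replaced by the profiles $\phi_n=-\tfrac1c\ln\bigl(1-(1-\e^{-nc})\xi\bigr)$, for which $\max_t\e^{-a\phi_n(t)}\bigl(1+\sum_{i\leq k}\smallabs{\phi_n^{(i)}(t)}\bigr)$ is bounded \emph{uniformly in $n$}; this decouples the condition on $c_j$ from $c_{j+1}$ and lets a plain forward induction close. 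Without this (or an equivalent) device your induction does not close.

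Two smaller points. Your injectivity-radius claim that $\inj_g(x)$ is bounded below by ``essentially $\e^{u(x)}$ times the $g_0$-conjugate radius at $x$'' is not correct as stated: $g$-geodesics are not reparametrized $g_0$-geodesics, $\inj_g(x)$ is governed by short geodesic loops and not only by conjugate points, and it depends on $u$ on a whole neighbourhood of $x$ rather than on the jet of $u$ at $x$; one needs a Klingenberg-type argument bounding from below the $g$-length of self-intersecting $g$-geodesics in terms of $\sup\e^{-u}(1+\abs{\dif u}_{g_0})$ over a neighbourhood, which is the content of Theorem \ref{RiemannInj} and is why the paper treats $1/\inj_{g[u]}$ as a ``quasi''-flatzoomer. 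Also, completeness does not follow from ``$u\to\infty$ at infinity'': if $g_0$ is incomplete, a properly divergent curve can have finite $\e^{2u}g_0$-length even though $u\to\infty$ along it (take $M=\ooi{0}{1}$ with the euclidean $g_0$ and $u(t)=\tfrac12\ln(1/t)$). The correct route is to arrange $\inj_g\geq\max\set{\iota,1}$, which your construction can deliver once the first gap is repaired.
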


The statement that the conformal class of $g_0$ contains a metric with $(\infty,\Kex)$-geometry bounded by $(\Eseq,\iota)$ becomes of course the stronger the more rapidly the elements of $\Eseq$ decay at infinity and the more rapidly $\iota$ increases at infinity.

\begin{corollary} \label{maincor}
Let $M$ be a manifold. Every conformal class of Riemannian metrics on $M$ contains a metric of bounded geometry. Every conformal class of Riemannian metrics on $M$ that contains a real-analytic metric contains a real-analytic metric of bounded geometry.
\end{corollary}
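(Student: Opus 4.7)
The plan is to derive Corollary~\ref{maincor} directly from Theorem~\ref{main} combined with Fact~\ref{equiv}. Given a Riemannian metric $g_0$ on $M$, I first pick any smooth compact exhaustion $\Kex = (K_i)_{i\in\N}$ of $M$; such an exhaustion exists because $M$ is second countable. Then I apply Theorem~\ref{main} with the following \emph{constant} auxiliary data: take $\iota$ to be the constant function $1$, take $\Eseq = (\eps_i)_{i\in\N}$ to be the sequence in which each $\eps_i$ is the constant function $1$, and take $u_0$ to be the constant function $1$ (any positive continuous choice would do here, since the inequality $u > u_0$ will only be used to ensure $u$ is positive). The theorem then produces a real-analytic function $u$ on $M$ with $u > u_0$ such that the metric $g \define \e^{2u}g_0$ is complete and has $(\infty,\Kex)$-geometry bounded by $(\Eseq,\iota)$.

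Since $\iota$ and each $\eps_i$ were chosen to be positive constants, the existential condition~(2) of Fact~\ref{equiv} is now verified for $g$ (with the exhaustion $\Kex$, the sequence $\Eseq$, and the constant $1$). Hence Fact~\ref{equiv} yields that $g$ has bounded geometry. By construction $g$ lies in the conformal class of $g_0$, which proves the first assertion of the corollary.

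For the second assertion, I would start the same construction from a real-analytic representative $g_0$ of the given conformal class, which exists by hypothesis. Because Theorem~\ref{main} asserts that the conformal factor $u$ it produces is itself real-analytic, the metric $\e^{2u}g_0$ is then a real-analytic Riemannian metric. By the first part it has bounded geometry, and it lies in the conformal class of $g_0$.

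The only genuinely non-trivial step, namely producing a conformal factor that simultaneously controls all covariant derivatives of the Riemann tensor and inflates the injectivity radius, is already absorbed into Theorem~\ref{main}. Consequently the corollary itself is pure bookkeeping: the potential obstacle was passing from pointwise bounds by continuous functions (with the freedom to blow up at infinity) to uniform bounds by constants, and this is exactly what is bypassed by feeding \emph{constant} functions $\eps_i$ and $\iota$ into the theorem. No further analytic input is required.
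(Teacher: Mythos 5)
Your proposal is correct and follows essentially the same route as the paper: choose a smooth compact exhaustion and constant data $\iota$ and $\Eseq$, apply Theorem~\ref{main} (to a real-analytic representative $g_0$ when one exists), and conclude via condition~(2) of Fact~\ref{equiv}. The observation that real-analyticity of $u$ makes $\e^{2u}g_0$ real-analytic is exactly the intended justification for the second assertion.
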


\begin{proof}
We choose a smooth compact exhaustion $\Kex$ of $M$, a sequence $\Eseq$ of positive constants, and a constant $\iota>0$. We apply Theorem \ref{main} to a metric $g_0$ --- a real-analytic one if possible --- in the given conformal class. The resulting $g=\e^{2u}g_0$ satisfies \ref{st2} from Fact \ref{equiv} and thus has bounded geometry.
\end{proof}

\noindent\emph{Remark 1.} Every manifold admits a real-analytic Riemannian metric by the Morrey--Grauert embedding theorem; cf.\ \cite{Shiga} and the references therein. But not every conformal class of Riemannian metrics contains a real-analytic one. For instance, on every nonempty manifold of dimension $\geq4$ one can easily construct a metric whose Weyl tensor is not real-analytic.

\medskip\noindent
\emph{Remark 2.} In the introduction to their article \cite{EFL}, Eichhorn--Fricke--Lang state in passing that it be easy to endow $\R^n$ with a metric which is not conformally equivalent to any metric of bounded geometry. Corollary \ref{maincor} disproves that.

\begin{corollary} \label{kbounded}
Let $k\in\N$, let $g_0$ be a Riemannian metric on a manifold $M$, let $\eps,\iota,u_0\in C^0(M,\R_{>0})$. Then there exists a real-analytic $u\colon M\to\R$ with $u>u_0$ such that $\e^{2u}g_0$ has $k$-geometry bounded by $(\eps,\iota)$.
\end{corollary}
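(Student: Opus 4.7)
The plan is to deduce Corollary~\ref{kbounded} as a direct consequence of Theorem~\ref{main} by a suitable reindexing of the exhaustion $\Kex$ and of the sequence $\Eseq$. The subtle point is that Theorem~\ref{main} only bounds $\bigabs{\nabla_g^i\Riem_g}_g$ by $\eps_i$ on the complement $M\without K_i$, while Corollary~\ref{kbounded} asks for such a bound on all of $M$ for each $i\in\set{0,\dots,k}$. I would force $M\without K_i=M$ for these indices by choosing $K_i=\leer$.

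Concretely, I would first pick any smooth compact exhaustion $(L_j)_{j\in\N}$ of $M$ and define $\Kex=(K_i)_{i\in\N}$ by setting $K_i\define\leer$ for $i\leq k$ and $K_i\define L_{i-k-1}$ for $i\geq k+1$. This is a smooth compact exhaustion of $M$: the empty set is (vacuously) a compact $C^\infty$ codimension-$0$ submanifold-with-boundary, since the conventions explicitly allow a manifold-with-boundary to have empty boundary; and the inclusions $K_i\subseteq\inte(K_{i+1})$ hold vacuously for $i<k$ (as $\leer\subseteq\leer$), trivially for $i=k$ (as $\leer\subseteq\inte(L_0)$), and by definition of $(L_j)$ for $i\geq k+1$. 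I would also define $\Eseq=(\eps_i)_{i\in\N}$ by $\eps_i\define\eps$ for $i\leq k$ and, say, $\eps_i\define1$ for $i>k$; this lies in $C^0(M,\R_{>0})$ as required.

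Applying Theorem~\ref{main} to the data $\Kex$, $\iota$, $u_0$, $\Eseq$, $g_0$ then yields a real-analytic $u\colon M\to\R$ with $u>u_0$ such that the metric $g\define\e^{2u}g_0$ has $(\infty,\Kex)$-geometry bounded by $(\Eseq,\iota)$. Unpacking the definition: $\inj_g\geq\iota$, and for every $i\in\set{0,\dots,k}$ the inequality $\bigabs{\nabla_g^i\Riem_g}_g\leq\eps_i=\eps$ holds on $M\without K_i=M\without\leer=M$. This is precisely the statement that $g$ has $k$-geometry bounded by $(\eps,\iota)$. I do not foresee any real obstacle: the whole argument is index bookkeeping around Theorem~\ref{main}, once one notices the admissibility of the empty set as an initial entry of a smooth compact exhaustion.
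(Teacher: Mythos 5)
Your proposal is correct and follows essentially the same route as the paper: choose a smooth compact exhaustion with $K_i=\leer$ for $i\leq k$, pick a suitable sequence $\Eseq$ (the paper simply takes all entries equal to $\eps$), and apply Theorem~\ref{main}. The index bookkeeping and the observation that the empty set is an admissible initial entry of the exhaustion are exactly the paper's argument.
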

\begin{proof}
We choose a smooth compact exhaustion $\Kex=(K_i)_{i\in\N}$ of $M$ with $K_i=\leer$ for $i\leq k$. We define $\Eseq$ to be the sequence all of whose entries are $\eps$. Theorem \ref{main} applied to $\Kex,\Eseq,\iota$ proves the claim.
\end{proof}

\noindent
\emph{Remark.} As stated in \ref{conventions}, we assume metrics to be $C^\infty$ for simplicity. Regularity $C^{k+2}$ would suffice for the Corollary \ref{kbounded} on $k$-bounded geometry, though, as interested readers will have no difficulty to check.

\begin{remark}
Since the standard definition of bounded geometry involves the injectivity radius, we have used it in the statements above. Replacing $\inj_g$ by the convexity radius $\conv_g$ in Definition \ref{defbounded} yields superficially stronger statements \ref{main}, \ref{maincor}, \ref{kbounded}, though, because every Riemannian metric $g$ satisfies $\conv_g\leq\inj_g$ (and even $2\conv_g\leq\inj_g$ holds for complete metrics $g$). However, \ref{equiv}, \ref{main}, \ref{maincor}, \ref{kbounded} remain true with $\conv_g$ instead of $\inj_g$, as we state explicitly in Theorem \ref{maingeneral} and prove in Section \ref{radii}.
\end{remark}

The proof of Theorem \ref{main} is similar to Greene's construction of metrics of bounded geometry \cite{Greene} in several aspects: Like us, Greene uses a compact exhaustion $(K_i)_{i\in\N}$, thereby decomposing $M$ into cylinders $Z_i$ diffeomorphic to $\R\times\mfbd K_i$ and \scare{topology-changing} regions $U_i$; and like us, he modifies a start metric $g_0$ only conformally, the conformal factor being constant on each $U_i$. The extreme simplification compared to our situation occurs on each of the sets $Z_i$, where Greene can choose $g_0$ to be a product metric, namely a very long cylinder, the length depending on the $g_0$-geometry of the neighboring regions $U_i$ and $U_{i+1}$. The only information he needs is that the functions $\bigabs{\nabla_g^i\Riem_g}{}_{g}$ and $\inj_g^{-1}$ become small when $g$ is multiplied by a large constant, and that they depend continuously on $g$ with respect to the compact-open $C^\infty$-topology. As we are not free to choose $g_0$, we have to work harder in the proof of \ref{main}, both with respect to $\bigabs{\nabla_g^i\Riem_g}{}_{g}$ and with respect to $\inj_g$. The non-obvious steps of the proof are Theorem \ref{RiemannInj} about injectivity radii and the choice of the cutoff functions in Lemma \ref{alpinist}. The rest of the argument consists of the conceptual setup and technical parts of the proof.

\medskip
One might ask whether Theorem \ref{main} could be improved with respect to extensions of metrics. For instance, \ref{main} says that for every $\eps\in C^0(M,\R_{>0})$, every conformal class of Riemannian metrics on $M$ contains a metric $g$ with $\abs{\Riem_g}{}_{\!g} < \eps$. In Gromov's h-principle language \cite{EM,GromovPDR}, this means that a certain (open second-order) partial differential relation for functions $M\to\R$ satisfies the h-principle. Whenever something like that happens, one should ask whether the relation satisfies even an \emph{h-principle for extensions}. In our case, the question is this: Given a closed subset $A$ of a manifold $M$ and a function $\eps\in C^0(M,\R_{>0})$, is the following statement true?
\begin{quote}
\scare{\emph{Let $g_0$ be a Riemannian metric on $M$ that fulfills $\abs{\Riem_{g_0}}{}_{\!g_0} < \eps$ on $A$. Then there exists a function $u\in C^\infty(M,\R)$ with $\bigrestrict{u}{A}=0$ such that $g\define\e^{2u}g_0$ fulfills $\abs{\Riem_g}{}_{\!g} < \eps$ on $M$.}}
\end{quote}
One can ask analogous questions for weaker relations like $\scal_g>-n(n-1)\eps$ instead of $\abs{\Riem_g}{}_{\!g}<\eps$, where $n=\dim M$. One can also weaken the statement:
\begin{quote}
\scare{\emph{Let $g_0$ be a Riemannian metric on $M$ that fulfills $\Riem_{g_0}=0$ on $A$. Then there exists a function $u\in C^\infty(M,\R)$ with $\bigrestrict{u}{A}=0$ such that $g\define\e^{2u}g_0$ fulfills $\scal_g > -\eps$ on $M$.}}
\end{quote}
Even this second statement is false: On every manifold $M$ of dimension $n\geq3$, for every compact codimension-$0$ submanifold-with-boundary $A$ of $M$ having $\mfbd A\neq\leer$, for every $\eps\in C^0(M,\R_{>0})$, and for every given Riemannian metric $\tilde{g}_0$ on $M$ that satisfies $\Riem_{\tilde{g}_0}=0$ on $A$, there exists a counterexample $g_0$ to the statement which is equal to $\tilde{g}_0$ on $A$ \cite{MN}. Hence the h-principle for extensions fails completely here.

This means that the differential relations we consider in the present article --- $\abs{\Riem_g}{}_{\!g} < \const\in\R_{>0}$, for instance --- belong to a type which is rare among strict partial differential inequalities arising naturally in geometry: they are flexible enough to admit solutions on manifolds of arbitrary topology, but the reason for this flexibility is \emph{not} that arbitrary solutions given on suitable closed subsets of $M$ could be extended to $M$. In contrast, when we drop the restriction to a given conformal class, then relations like $\abs{\Riem_g}{}_{\!g} < \const\in\R_{>0}$ \emph{are} flexible in the following strong sense: When $A$ is a closed subset of $M$ such that no connected component of $M\without A$ is relatively compact in $M$, then for every Riemannian metric $g_0$ on $M$ which satisfies the relation on $A$, there exists a (possibly not complete) metric $g$ that satisfies the relation on $M$ and is equal to $g_0$ on $A$. (This is a consequence of \cite[Theorem 7.2.4]{EM}; cf.\ \cite{GN} for details and generalizations.)

\medskip
Now that we have seen that Theorem \ref{main} is the best result one can hope for in the \scare{plain Riemannian} setting, let us discuss the announced generalizations to foliated manifolds and semi-Riemannian metrics. The core of our proof of Theorem \ref{main} is the construction of solutions to certain ordinary differential inequalities. This core argument does not involve any geometry. In Section \ref{flatzoomers}, we will axiomatize the general situation it applies to by introducing the notion of a \emph{flatzoomer}: a functional that assigns to functions $u\in C^\infty(M,\R)$ --- which in our context describe conformal factors --- functions $\Phi(u)\in C^0(M,\R_{\geq0})$ that satisfy certain estimates. For instance, for $i\in\N$ and a Riemannian metric $g$ on $M$, the functional $\Phi_i\colon u\mapsto \bigabs{\nabla_{g[u]}^i\Riem_{g[u]}}_{g[u]}$ with $g[u]\define\e^{2u}g$ is a flatzoomer. Leaving some subtleties of the injectivity radius aside, Theorem \ref{main} is obtained as a special case of a result about sequences of flatzoomers (e.g.\ the sequence $(\Phi_i)_{i\in\N}$), namely Theorem \ref{FlatzoomThemAll} below. More generally, one can use this abstract result to prove the following theorem.

\begin{theorem} \label{maingeneral}
Let $\Fol$ be a foliation on a manifold $M$, let $g_0,h_0$ be semi-Riemannian metrics (not necessarily of the same signature) on $M$ which induce (nondegenerate) semi-Riemannian metrics on (the leaves of) $\Fol$. Let $(K_i)_{i\in\N}$ be a smooth compact exhaustion of $M$, let $\iota,u_0\in C^0(M,\R_{>0})$, let $(\eps_i)_{i\in\N}$ be a sequence in $C^0(M,\R_{>0})$. Then there exists a real-analytic function $u\colon M\to\R$ with $u>u_0$ such that the metrics $g\define\e^{2u}g_0$ and $h\define\e^{2u}h_0$ have the following properties:
\begin{enumerate}[label=(\roman*)]
\item\label{gen1} For every $i\in\N$, $\bigabs{\nabla_g^i\Riem_g}_h < \eps_i$ holds on $M\without K_i$.
\item\label{gen2} If $g_0$ is Riemannian, then $g$ is complete; its injectivity and convexity radii satisfy $\inj_g\geq 2\conv_g > \iota$.
\item\label{gen3} For every $i\in\N$, $\bigabs{\nabla_{g_\Fol}^i\Riem_{g_\Fol}}_{h_\Fol} < \eps_i$ holds on $M\without K_i$.
\item\label{gen4} If $(g_0)_\Fol$ is Riemannian, then for each $\Fol$-leaf $L$, $g_L$ is complete with $\inj_{g_L} \geq 2\conv_{g_L} > \bigrestrict{\iota}{L}$.
\item\label{gen5} For every $i\in\N$, $\bigabs{\nabla_g^i\SecondFF^\Fol_g}_h < \eps_i$ holds on $M\without K_i$.
\end{enumerate}
\end{theorem}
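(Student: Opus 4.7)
Proof proposal. The strategy is to derive assertions (i), (iii), (v) from the abstract Theorem~\ref{FlatzoomThemAll} on sequences of flatzoomers, and then to enforce the injectivity- and convexity-radius estimates in (ii) and (iv) by a further pointwise enlargement of $u$ using Theorem~\ref{RiemannInj} together with the material of Section~\ref{radii}. The two steps combine cleanly because a pointwise increase of $u$ can only decrease any flatzoomer, so the radius estimates are bought without giving up the curvature estimates.

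The first technical task is to verify, for each $i\in\N$, that the three functionals
\begin{equation*}
u \mapsto \bigabs{\nabla_g^i\Riem_g}_h,\qquad u \mapsto \bigabs{\nabla_{g_\Fol}^i\Riem_{g_\Fol}}_{h_\Fol},\qquad u \mapsto \bigabs{\nabla_g^i\SecondFF^\Fol_g}_h
\end{equation*}
with $g=\e^{2u}g_0$ and $h=\e^{2u}h_0$ are flatzoomers in the sense of Section~\ref{flatzoomers}. In each case the conformal transformation formulas for the Levi-Civita connection, the Riemann tensor, and the second fundamental form express the underlying tensor as a universal polynomial in the jets of $u$ up to some fixed order $r_i$ (namely $r_i=i+2$ for the curvature functionals and $r_i=i+1$ for the second-fundamental-form functional), with coefficients depending only on $g_0$, $h_0$, and $\Fol$. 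Each monomial, once contracted against $h=\e^{2u}h_0$, carries a strictly negative net power of $\e^{u}$, because every raised index in the $h$-norm costs a factor $\e^{-2u}$ relative to $h_0$ and this dominates the bounded polynomial growth contributed by differentiating $u$. Consequently large values of $u$ drive each functional pointwise below any prescribed positive continuous bound -- the axiomatic behaviour of a flatzoomer. The leafwise version in (iii) is formally identical once one observes that $\Fol$ is preserved under conformal rescaling and $g_\Fol=\e^{2u}(g_0)_\Fol$; for (v) there is an additional summand linear in the normal component of $\nabla u$, but it enters as a lower-order perturbation compatible with the flatzoomer axioms.

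With the flatzoomers identified, I would enumerate the countable family $\bigl\{\Phi_{i,\alpha}\bigsuchthat i\in\N,\,\alpha\in\{1,2,3\}\bigr\}$ as a single sequence $(\Psi_k)_{k\in\N}$, reindex the bounds $(\eps_i)$ and the exhaustion $(K_i)$ with a suitable finite shift, and apply Theorem~\ref{FlatzoomThemAll} to obtain a real-analytic $u_1>u_0$ that simultaneously realises (i), (iii), (v). To upgrade $u_1$ to a $u\geq u_1$ also realising (ii) and (iv) I would invoke Theorem~\ref{RiemannInj}, applied once to $(M,g)$ and once, fibrewise, to each leaf $(L,g_L)$: given the ambient and leafwise curvature bounds already in place, it yields a further pointwise enlargement of the conformal factor that forces $\inj_g\geq 2\conv_g>\iota$ and, when $(g_0)_\Fol$ is Riemannian, the analogous bound $\inj_{g_L}\geq 2\conv_{g_L}>\bigrestrict{\iota}{L}$, without disturbing the curvature bounds by monotonicity. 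Completeness of $g$ and of every $g_L$ then follows automatically from positivity of the respective injectivity radii.

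The main obstacle I anticipate is the compatibility check in the second paragraph: one must verify that the leafwise curvature functional and the second-fundamental-form functional satisfy the flatzoomer axioms \emph{uniformly in $i$ and across leaves}, so that a single ambient conformal factor controls the ambient geometry, the intrinsic geometry of every leaf, and the extrinsic geometry of the foliation at once. A subordinate but non-trivial point is that the norms on the right-hand side are taken with respect to the (possibly indefinite) metric $h$ rather than $g$; here one must track carefully how the chosen signature of $h_0$ affects the sign bookkeeping in the $\e^u$-exponents, to make sure that the net exponent really is strictly negative. Once these compatibilities are established, the conclusion is an orchestration of Theorem~\ref{FlatzoomThemAll} and Theorem~\ref{RiemannInj} as indicated above.
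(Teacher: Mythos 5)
Your identification of the three curvature/second-fundamental-form functionals as flatzoomers and the single application of Theorem~\ref{FlatzoomThemAll} to obtain \ref{gen1}, \ref{gen3}, \ref{gen5} matches the paper (the paper avoids your re-indexing of the triple family by simply summing the three functionals for each $i$, using Example~\ref{quasiex2}; note that your proposed enumeration with a ``finite shift'' pairs $\Phi_{i,\alpha}$ with a later set $K_k\supseteq K_i$ and hence only yields the estimate on the \emph{smaller} set $M\without K_k$, so the summation trick is not just cosmetic).

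The genuine gap is in your treatment of \ref{gen2} and \ref{gen4}. Your two-step plan rests on the claim that ``a pointwise increase of $u$ can only decrease any flatzoomer,'' and this is false: the flatzoomer bound is $\e^{-\alpha u(x)}P(x)\bigl(u(x),\abs{\nabla^1_\eta u}_\eta(x),\dots\bigr)$, which involves the derivatives of $u$, and an uncontrolled pointwise enlargement of $u$ can make the polynomial factor grow faster than $\e^{-\alpha u}$ decays. This failure of monotonicity is precisely the difficulty the paper isolates in Remark~\ref{alpineremark} and overcomes with the alpinist construction of Lemma~\ref{alpinist}; a second, independent enlargement of $u_1$ would in general destroy the curvature estimates already obtained. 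Moreover, Theorem~\ref{RiemannInj} does not ``yield a further pointwise enlargement of the conformal factor'': it only certifies that $u\mapsto 1/\conv^\Fol_{(g_0)_\Fol[u]}$ (and $u\mapsto 1/\conv_{g_0[u]}$) is a \emph{quasi}-flatzoomer --- this is exactly why the quasi-flatzoomer notion exists, since the inverse convexity radius is not controlled by a jet of $u$ at a point and need not even be continuous leafwise. The correct route is to append this quasi-flatzoomer as $\Phi_0'$ to the shifted exhaustion $K_0'\define\leer$, $K_i'\define K_{i-1}$, with $\eps_0'\define 1/(\iota+1)$, and apply Theorem~\ref{FlatzoomThemAll} \emph{once} to the augmented sequence. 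This simultaneously secures all five conclusions and gives $\conv\geq\iota+1\geq1$; the uniform bound $\geq1$ (not mere positivity, which every Riemannian metric has pointwise and which would not suffice if $\iota$ decays at infinity) is what yields completeness, and $\inj\geq2\conv$ then follows from Chavel's inequality for complete metrics.
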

Here $\nabla_g^i\SecondFF^\Fol_g$ denotes the $i$th covariant derivative with respect to $g$ of the second fundamental form of $\Fol$ with respect to $g$ (cf.\ \ref{ex4} for details); for a semi-Riemannian metric $\eta$ on $M$, $\eta_\Fol$ denotes the field of bilinear forms induced by $\eta$ on (the leaves of) $\Fol$; for every leaf $L$ of $\Fol$, $g_L$ denotes the metric on $L$ induced by $g$; $\nabla_{g_\Fol}^i\Riem_{g_\Fol}$ is the tensor field on $\Fol$ which assigns to each $x\in M$ the value of $\nabla_{g_L}^i\Riem_{g_L}$ at $x$, where $L$ is the leaf through $x$ (cf.\ \ref{ex3}); and the absolute value $\abs{T}_\eta$ of a tensor field $T$ with respect to a semi-Riemannian metric $\eta$ is defined to be the function $\abs{\eval{T}{T}_\eta}{}^{1/2}$ (cf.\ \ref{notation}).

\medskip\noindent
\emph{Remark 1.} Theorem \ref{main} is the special case of \ref{maingeneral} where $g_0=h_0$ is Riemannian.

\medskip\noindent
\emph{Remark 2.} The Riemannianness assumptions in Theorem \ref{maingeneral}\ref{gen2},\ref{gen4} cannot be avoided in general, as we discuss briefly in Section \ref{radii} below. In particular, not every conformal class of Lorentzian metrics contains a geodesically complete one, even on closed manifolds.

\medskip
The information that the metric $g$ we get from Theorem \ref{maingeneral} lies in a given conformal class is particularly important for indefinite metrics: then the causal structure (which is an invariant of the conformal class) plays a crucial role in many considerations. For instance, if the given $g_0$ is a globally hyperbolic or stably causal Lorentzian metric, then the metric $g$ provided by \ref{maingeneral} has the same property.

A typical special case of Theorem \ref{maingeneral} is the following Corollary \ref{mainlorentz} about stably causal Lorentzian metrics. Since every globally hyperbolic metric is stably causal, \ref{mainlorentz} applies in particular to the globally hyperbolic setting. Recall that a Lorentzian manifold $(M,g)$ is stably causal if and only if it admits a temporal function \cite{BS}, that is, a function $t\in C^\infty(M,\R)$ whose gradient $\grad_gt$ is $g$-timelike: $g(\grad_gt,\grad_gt)<0$. In this situation, we consider the Wick rotation of $g$ around the timelike subbundle $\R\grad_gt$ of $TM$; i.e., the Riemannian metric $\Wick(g,t)$ on $M$ defined by
\[
\Wick(g,t)(v,w) \define g(v,w) -\frac{2g(v,\grad_gt)g(w,\grad_gt)}{g(\grad_gt,\grad_gt)}
= g(v,w) -\frac{2\diff t(v)\diff t(w)}{\eval{\diff t}{\diff t}_g} \,.
\]
(On each level set of $t$, $g$ and $\Wick(g,t)$ induce the same Riemannian metric; whereas for $X\define\grad_gt$, we have $g(X,X)=-\Wick(g,t)(X,X)$.)

\begin{corollary} \label{mainlorentz}
Let $(M,g_0)$ be a Lorentzian manifold, let $t\in C^\infty(M,\R)$ have timelike $g_0$-gradient, let $\Fol$ denote the foliation of $M$ whose leaves are the level sets of $t$. Let $(K_i)_{i\in\N}$ be a smooth compact exhaustion of $M$, let $\iota,u_0\in C^0(M,\R_{>0})$, let $(\eps_i)_{i\in\N}$ be a sequence in $C^0(M,\R_{>0})$. Then there exists a real-analytic function $u\colon M\to\R$ with $u>u_0$ such that $g\define\e^{2u}g_0$ has the following properties:
\begin{enumerate}[label=(\roman*)]
\item\label{gen1b} For every $i\in\N$, $\bigabs{\nabla_g^i\Riem_g}_{\Wick(g,t)} < \eps_i$ holds on $M\without K_i$.
\item For every level set $L$ of $t$, $g_L$ is a complete Riemannian metric on $L$ with $\inj_{g_L} \geq 2\conv_{g_L} > \bigrestrict{\iota}{L}$.
\item\label{gen3b} For every $i\in\N$ and every level set $L$ of $t$, $\bigabs{\nabla_{g_L}^i\Riem_{g_L}}_{g_L} < \eps_i$ holds on $M\without K_i$.
\item\label{gen5b} For every $i\in\N$, $\bigabs{\nabla_g^i\SecondFF^\Fol_g}_{\Wick(g,t)} < \eps_i$ holds on $M\without K_i$.
\end{enumerate}
\end{corollary}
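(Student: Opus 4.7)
The plan is to derive Corollary \ref{mainlorentz} directly from Theorem \ref{maingeneral} by taking $\Fol$ to be the foliation of $M$ by the level sets of $t$ (as stated) and setting $h_0 := \Wick(g_0,t)$. The latter is a genuine Riemannian metric by construction of Wick rotation around a timelike line subbundle. Moreover, since $\grad_{g_0}t$ is $g_0$-timelike, its $g_0$-orthogonal complement -- which is precisely $T\Fol$ -- is spacelike, so $(g_0)_\Fol$ is Riemannian as well. All signature hypotheses of \ref{maingeneral} are therefore satisfied.

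Applying \ref{maingeneral} to $(g_0, h_0, \Fol, (K_i), \iota, u_0, (\eps_i))$ yields a real-analytic $u > u_0$ and metrics $g := \e^{2u}g_0$, $h := \e^{2u}h_0$. The crucial point linking the conclusions of \ref{maingeneral} to those of \ref{mainlorentz} is that Wick rotation is covariant under conformal rescaling:
\[
\Wick(g,t) \;=\; \e^{2u}\,\Wick(g_0,t) \;=\; h.
\]
This follows from the defining formula for $\Wick$ together with the identity $\eval{\diff t}{\diff t}_g = \e^{-2u}\eval{\diff t}{\diff t}_{g_0}$, the two exponential factors conspiring with the $g(v,w)$ term to yield the overall factor $\e^{2u}$. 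A second elementary but essential observation: on any leaf $L$ and any $v$ tangent to $L$ one has $\diff t(v) = 0$, whence $g|_L = \Wick(g,t)|_L = h|_L$, so in particular $g_L = h_L$ on every level set $L$.

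With these two identifications in hand, the four items of Corollary \ref{mainlorentz} translate directly from Theorem \ref{maingeneral}: item (i) is \ref{maingeneral}\ref{gen1} with $h = \Wick(g,t)$; item (iv) is \ref{maingeneral}\ref{gen5} with the same $h$; item (iii) follows from \ref{maingeneral}\ref{gen3} because $\bigabs{\nabla_{g_\Fol}^i\Riem_{g_\Fol}}_{h_\Fol}$ coincides pointwise on each leaf with $\bigabs{\nabla_{g_L}^i\Riem_{g_L}}_{g_L}$ thanks to $g_L = h_L$; and item (ii) is \ref{maingeneral}\ref{gen4}, which applies precisely because $(g_0)_\Fol$ is Riemannian. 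I do not foresee a substantive obstacle here: once the two elementary observations above are in place -- that $\Wick(g_0,t)$ is Riemannian, and that Wick rotation commutes with the conformal scaling $\e^{2u}$ -- the corollary is a pure translation of the theorem.
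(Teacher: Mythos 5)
Your proposal is correct and is exactly the intended derivation: the paper leaves the proof implicit (the remark after the corollary says one applies Theorem \ref{maingeneral} with the Riemannian metric $h_0=\Wick(g_0,t)$), and your two key observations --- the conformal covariance $\Wick(\e^{2u}g_0,t)=\e^{2u}\Wick(g_0,t)$ and the identity $g_\Fol=\Wick(g,t)_\Fol$ on the leaves --- are precisely what is needed to translate the conclusions of \ref{maingeneral} into those of \ref{mainlorentz}.
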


\noindent
\emph{Remark 1.} If one is only interested in estimates of finitely many derivatives of $\Riem_g$, $\Riem_{g_L}$ and $\SecondFF^\Fol_g$, one can choose a compact exhaustion whose first $N$ elements are empty (as in the proof of \ref{kbounded}) and thus gets estimates in \ref{gen1b}, \ref{gen3b}, \ref{gen5b} that hold on all of $M$.

\medskip\noindent
\emph{Remark 2.} In the situation of \ref{mainlorentz}, one wants to apply \ref{maingeneral} to a \emph{Riemannian} metric $h$ (instead of taking for instance $h=g$) in order to get sharper estimates; see the Remark after Example \ref{ex2} below. The metric $h=\Wick(g,t)$ is just the most natural choice.

\bigskip
Let us consider the case where $g_0=h_0$ is Riemannian in Theorem \ref{maingeneral}. Even if one is not interested in having a solution metric $g$ in each conformal class, the conformal class construction is probably the only chance to prove, for an \emph{arbitrary} foliation $\Fol$ and any given $\eps\in C^0(M,\R_{>0})$, the existence of a metric $g$ satisfying e.g.\ $\abs{\Riem_{g_\Fol}}_{g_\Fol}<\eps$ and $\bigabs{\SecondFF_g^\Fol}_g < \eps$. Since the foliation $\Fol$ will usually not fit to the structure of \emph{any} compact exhaustion $(K_i)_{i\in\N}$ of $M$ (in the sense that the boundaries $\mfbd K_i$ are not leaves of $\Fol$), a Greene-style construction would not work, for instance. The problem becomes even more severe when $g_0$ or $h_0$ is not Riemannian.

\medskip
Our method of proof, in particular Theorem \ref{FlatzoomThemAll}, should be regarded as a construction kit for all kinds of theorems in the spirit of \ref{maingeneral}. Instead of a foliation, such theorems might involve other geometric objects, e.g.\ bundles, almost complex structures or symplectic forms. Functions built from a metric $g$ and from such objects will often define flatzoomers via conformal change of $g$; cf.\ Remark \ref{zoomremark}. The flatzoomer condition is always easy to check for a given example. Whenever it holds, one gets a theorem of the form \ref{maingeneral} saying that the considered function is small for some (complete) metric $g$ in the desired conformal class.

Since bounded geometry entails nice analytic properties --- in particular Sobolev embeddings: cf.\ \cite[\S1.3]{EichhornGlobAna} ---, Theorem \ref{maingeneral} should be useful in several contexts: In conformally invariant field theories on curved Lorentzian or Riemannian backgrounds, it allows to choose a convenient background metric without leaving the conformal class. In the context of conformal-class-preserving flows like the Yamabe flow for Riemannian metrics on open manifolds, it could be used to choose a suitable start metric, which might then flow to a special metric within the given conformal class.

\medskip
The article is organized as follows: Flatzoomers are introduced in Section \ref{flatzoomers}. Injectivity and convexity radii are discussed in Section \ref{radii}. Section \ref{mainproof} contains the proofs of our main results.

\subsection*{Acknowledgments.} We thank Nadine Gro{\ss}e for pointing out reference \cite{EFL}; Stefan Suhr for remarks on the completeness of Lorentzian metrics; and Hans-Bert Rademacher for pointing out a mistake in an earlier version of Lemma \ref{babyKling}.

%%%%%%%%%%%%%%%%%%%%%%%%%%%%%%%%%%%%%%%%%%%%%%%%%%%%%%%%%%%%%%%%%%%%%%%%%%%%%%
\section{Flatzoomers} \label{flatzoomers}
%%%%%%%%%%%%%%%%%%%%%%%%%%%%%%%%%%%%%%%%%%%%%%%%%%%%%%%%%%%%%%%%%%%%%%%%%%%%%%

In this section, we introduce the notions \emph{flatzoomer} and \emph{quasi-flatzoomer} and give several examples.

\begin{notation}[{\protect $g[u]$, $\nabla_g^iT$, $\abs{T}{}_g$, $\Poly{m}{d}$}] \label{notation}
Let $(M,g)$ be a semi-Riemannian manifold.
\begin{itemize}
\item For $u\in C^\infty(M,\R)$, we denote the semi-Riemannian metric $\e^{2u}g$ by $g[u]$.
\item For $i\in\N$, the $i$th covariant derivative with respect to $g$ of a $C^\infty$ tensor field $T$ on $M$ is denoted by $\nabla_g^iT$.
\item The function $\eval{T}{T}_g\in C^\infty(M,\R)$ is the total contraction of $T\otimes T$ via $g$ in corresponding tensor indices. If $T$ is for instance a field of $k$-multilinear forms, this means that for every $x\in M$ and every $g$-orthonormal basis $(e_1,\dots,e_n)$ of $T_xM$ (where $g$-orthonormality means $g(e_i,e_j)=\eps_i\delta_{ij}$ for some $\eps_1,\dots,\eps_n\in\set{1,-1}$), we have
    \[
    \eval{T}{T}_g(x) = \sum_{i_1=1}^n\dots\sum_{i_k=1}^n\eps_{i_1}\cdots\eps_{i_k}T(e_{i_1},\dots,e_{i_k})^2 \,.
    \]
    Note that $\eval{T}{T}_g$ is not necessarily nonnegative if $g$ is not Riemannian.
\item The function $\abs{T}_g\in C^0(M,\R_{\geq0})$ is defined to be $\abs{\eval{T}{T}_g}{}^{\!1/2}$.
\item $\Riem_g$ denotes the Riemann tensor, viewed as a tensor field of type $(4,0)$. We adopt the Besse sign convention for $\Riem_g$ \cite{Besse}.
\item For $m,d\in\N$, $\Poly{m}{d}$ denotes the (finite-dimensional) $\R$-vector space of real polynomials of degree $\leq d$ in $m$ variables, equipped with its unique Hilbert space topology.
\end{itemize}
\end{notation}

\noindent
\emph{Remark.} Recall that $\eval{T}{T}_g$ does not change when we raise or lower indices of $T$ via $g$. In particular, functions like $\bigabs{\nabla_g^i\Riem_g}{}_g$ do not depend on whether one considers $\Riem_g$ as a $(4,0)$- or $(3,1)$-tensor field. However, when $h$ is another semi-Riemannian metric on $M$, then in general $\bigabs{\nabla_g^i\Riem_g}{}_h$ depends on this choice. The difference would not matter anywhere in this article, though, because the crucial flatzoomer properties would not be affected; see \ref{zoomremark} for related remarks.

\begin{definition} \label{fzdef}
Let $M$ be a manifold. A functional $\Phi\colon C^\infty(M,\R)\to C^0(M,\R_{\geq0})$ is a \textbf{flatzoomer} iff for some --- and hence every --- Riemannian metric $\eta$ on $M$, there exist $k,d\in\N$, $\alpha\in\R_{>0}$, $u_0\in C^0(M,\R)$ and a polynomial-valued map $P\in C^0\big(M,\Poly{k+1}{d}\big)$ such that
\[
\Phi(u)(x) \;\leq\; \e^{-\alpha u(x)}P(x)\left(u(x),\, \bigabs{\nabla_\eta^1u}_\eta(x),\, \dots,\, \bigabs{\nabla_\eta^ku}_\eta(x)\right)
\]
holds for all $x\in M$ and all $u\in C^\infty(M,\R)$ which satisfy $u(x)>u_0(x)$.
\end{definition}
\begin{proof}[Proof of \scare{and hence every}]
This is essentially straightforward and similar to but simpler than the proof of Example \ref{ex3} below. We omit the details.
\end{proof}

\begin{example}[covariant derivatives of the Riemann tensor] \label{ex1}
Let $(M,g)$ be a semi-Riemannian manifold, let $k\in\N$. Then $\Phi\colon C^\infty(M,\R)\to C^0(M,\R_{\geq0})$ defined by
\[
\Phi(u) \define \abs{\nabla^k_{g[u]}\Riem_{g[u]}}_{g[u]}
\]
is a flatzoomer. (This will be proved after Example \ref{ex3} below.)
\end{example}

\noindent
\emph{Remark.} Examples of this form, where $\Phi$ results from a given function like $\bigabs{\nabla^k_g\Riem_g}{}_g$ by varying $g$ conformally, motivate the terminology \scare{\emph{flatzoomer}}: As the \scare{zoom factor} $u$ becomes larger, $\Phi(u)$ becomes smaller (because $\e^{-\alpha u}$ tends to $0$) locally uniformly, provided the derivatives of $u$ are bounded in a suitable way described by $P$ and $\eta$. For instance, the curvature of $g[u]$ becomes smaller in the sense that $\abs{\Riem_{g[u]}}{}_{g[u]}$ tends to $0$; i.e., $g[u]$ becomes flatter.

\bigskip
We can generalize Example \ref{ex1}:
\begin{example}[covariant derivatives of the Riemann tensor again] \label{ex2}
Let $g,h$ be semi-Riemannian metrics (not necessarily of the same signature) on a manifold $M$, let $k\in\N$. Then $\Phi\colon C^\infty(M,\R)\to C^0(M,\R_{\geq0})$ defined by
\[
\Phi(u) \define \abs{\nabla^k_{g[u]}\Riem_{g[u]}}_{h[u]}
\]
is a flatzoomer. (This will be proved after Example \ref{ex3} below.)
\end{example}

\noindent
\emph{Remark.} Especially interesting is the case where $g$ is Lorentzian and $h$ is Riemannian. There are many situations, in particular in General Relativity, where one would like to have a Lorentzian metric $g$ on a manifold $M$ which makes a certain codimension-$1$ foliation $\Fol$ on $M$ spacelike, such that the curvature of $g$ is controlled in a stronger sense than $\abs{\Riem_g}{}_{\!g}$ being small: Typically, one wants to control certain components $\Riem_g(e_i,e_j,e_k,e_l)$ of $\Riem_g$, where $(e_0,\dots,e_{n-1})$ is a local orthonormal frame such that $e_1,\dots,e_{n-1}$ are tangential to the spacelike foliation $\Fol$ (and thus $e_0$ is timelike). However, the terms $\Riem_g(e_i,e_j,e_k,e_l)^2$ occur with different signs in the sum $\eval{\Riem_g}{\Riem_g}{}_{\!g}$. Thus the condition of $\abs{\Riem_g}{}_{\!g}$ being small is too weak; one wants that $\abs{\Riem_g}{}_{\!h}$ is small for some \emph{Riemannian} metric $h$. (When $\Fol$ is already given, it is natural to take the $h$ which one obtains from $g$ by changing the sign in the direction orthogonal to $\Fol$, as in Theorem \ref{mainlorentz}. Example \ref{ex2} works with an arbitrary $h$, though.)

\bigskip
Even more generally than Example \ref{ex2} (in the sense that \ref{ex2} results from considering the codimension-$0$ foliation whose only leaf is $M$), we can consider the curvature of the leaves of a foliation on $M$ instead of the curvature of the whole manifold $M$:

\begin{example}[covariant derivatives of the Riemann tensor of a foliation] \label{ex3}
Let $\Fol$ be a foliation on a manifold $M$, let $g,h$ be semi-Riemannian metrics on $\Fol$ (i.e., $g,h$ are smooth sections in the bundle $\Sym^2_\mt{nd}T^\ast\Fol\to M$, whose fiber over $x$ consists of the nondegenerate symmetric bilinear forms on the tangent space $T_x\Fol$ of the $\Fol$-leaf that contains $x$), let $k\in\N$. Generalizing \ref{notation}, we write $\tilde{g}[u]\define\e^{2u}\tilde{g}$ for any semi-Riemannian metric $\tilde{g}$ on $\Fol$; and similarly for the other notation in \ref{notation}. Then $\Phi\colon C^\infty(M,\R)\to C^0(M,\R_{\geq0})$ defined by
\[
\Phi(u) \define \abs{\nabla^k_{g[u]}\Riem_{g[u]}}_{h[u]}
\]
is a flatzoomer.
\end{example}

\noindent
\emph{Remark.} Note that the signature of $g$ resp.\ $h$ is automatically constant on each connected component of $M$, for continuity reasons.

\begin{proof}[Proof of \ref{ex3}, and thus of \ref{ex1} and \ref{ex2}]
For $r\in\N$, let $\Tens_r(\Fol)\to M$ denote the $\R$-vector bundle of $(r,0)$-tensors on $\Fol$; thus the fiber over $x\in M$ consists of the $r$-multilinear forms on $T_x\Fol$.

\smallskip
For $u\in C^\infty(M,\R)$, the $(4,0)$-Riemann curvature of $g[u]$ is \cite[Theorem 1.159b]{Besse}
\begin{equation} \label{confRiem}
\Riem_{g[u]} = \e^{2u}\Big(\Riem_{g} -g\kuno\Big(\Hess_{g}u -\dif u\otimes\dif u +\tfrac{1}{2}\abs{\dif u}_{g}^2\,g\Big)\Big) \,.
\end{equation}
With the notation $\nabla^{\tilde{g}} \equiv \nabla_{\tilde{g}}^1$, we have \cite[Theorem 1.159a]{Besse} for all sections $X$ in $T\Fol\to M$ and all $v\in T\Fol$:
\begin{equation} \label{confLC}
\nabla^{g[u]}_vX = \nabla^{g}_vX +\dif u(X)v +\dif u(v)X -g(v,X)\grad_{g}u \,.
\end{equation}

Let $k,m\in\N$. We consider the (finite-dimensional) $\R$-vector space $\PC^{g,\Fol}_{k,m}$ of base-preserving vector bundle morphisms $\Tens_{k+4+2m}(\Fol)\to\Tens_{k+4}(\Fol)$ which is spanned by all morphisms of the form $\xi\compose\pi$, where $\pi\colon\Tens_{k+4+2m}(\Fol)\to\Tens_{k+4+2m}(\Fol)$ is a permutation of tensor indices (the same permutation over each $x\in M$) and $\xi\colon\Tens_{k+4+2m}(\Fol)\to\Tens_{k+4}(\Fol)$ contracts each of the first $m$ pairs of indices via $g$.

\smallskip
We claim that for every $k\in\N$, there exist a number $\mu_k\in\N$ and, for each $i\in\set{1,\dots,\mu_k}$,
\begin{itemize}
\item a number $a_{k,i}\in\N$ and a section $\omega_{k,i}$ in $\Tens_{a_{k,i}}(\Fol)\to M$,
\item numbers $c_{k,i,1},\dots,c_{k,i,k+2}\in\N$,
\item a number $m_{k,i}\in\N$ with $a_{k,i} +\sum_{\nu=1}^{k+2}\nu c_{k,i,\nu} = k+4+2m_{k,i}$,
\item and a morphism $\psi_{k,i}\in\PC^{g,\Fol}_{k,m_{k,i}}$
\end{itemize}
such that the following equation holds for all $u\in C^\infty(M,\R)$:
\begin{equation} \label{Riemclaim} \begin{split}
\nabla^k_{g[u]}\Riem_{g[u]}
\;=\; \e^{2u}\;\sum_{i=1}^{\mu_k}\;\psi_{k,i}\bigg( \omega_{k,i}
\otimes \left(\nabla^1_{g}u\right)^{\otimes c_{k,i,1}}
\otimes\dots\otimes \left(\nabla^{k+2}_{g}u\right)^{\otimes c_{k,i,k+2}} \bigg) \,.
\end{split} \end{equation}

We prove this by induction over $k$. Equation \eqref{confRiem} shows that in the case $k=0$, \eqref{Riemclaim} holds with $\mu_0=4$,
\begin{align*}
a_{0,1} &= 4 \,,
&a_{0,2} &= 2 \,,
&a_{0,3} &= 2 \,,
&a_{0,4} &= 4 \,,\\
\omega_{0,1} &= \Riem_{g} \,,
&\omega_{0,2} &= g \,,
&\omega_{0,3} &= g \,,
&\omega_{0,4} &= g\kuno g \,,\\
c_{0,1,1} &= 0 \,,
&c_{0,2,1} &= 0 \,,
&c_{0,3,1} &= 2 \,,
&c_{0,4,1} &= 2 \,,\\
c_{0,1,2} &= 0 \,,
&c_{0,2,2} &= 1 \,,
&c_{0,3,2} &= 0 \,,
&c_{0,4,2} &= 0 \,,\\
m_{0,1} &= 0 \,,
&m_{0,2} &= 0 \,,
&m_{0,3} &= 0 \,,
&m_{0,4} &= 1 \,,
\end{align*}
for suitable morphisms $\psi_{0,1},\psi_{0,2},\psi_{0,3}\in\PC^{g,\Fol}_{0,0}$ and $\psi_{0,4}\in\PC^{g,\Fol}_{0,1}$.

\smallskip
Now we assume that \eqref{Riemclaim} holds for some $k\in\N$ and verify it for $k+1$. Since all elements of $\PC^{g,\Fol}_{k,\ast}$ are $g$-parallel, we obtain (using $\nabla^1_{g[u]}u = \dif u = \nabla^1_{g}u$ and the product and chain rules)
\[ \begin{split}
&\nabla_{g[u]}^{k+1}\Riem_{g[u]} = \nabla_{g[u]}\nabla_{g[u]}^k\Riem_{g[u]}\\
&= \e^{2u}\;\sum_{i=1}^{\mu_k}\;\nabla_{g[u]}\left( \psi_{k,i}\bigg( \omega_{k,i}
\otimes \left(\nabla^1_{g}u\right)^{\otimes c_{k,i,1}}
\otimes\dots\otimes \left(\nabla^{k+2}_{g}u\right)^{\otimes c_{k,i,k+2}} \bigg) \right)\\
&\mspace{16mu}+2\e^{2u}\diff u\otimes\sum_{i=1}^{\mu_k}\;\psi_{k,i}\bigg( \omega_{k,i}
\otimes \left(\nabla^1_{g}u\right)^{\otimes c_{k,i,1}}
\otimes\dots\otimes \left(\nabla^{k+2}_{g}u\right)^{\otimes c_{k,i,k+2}} \bigg)\\
&= \underbrace{\e^{2u}\;\sum_{i=1}^{\mu_k}\;\hat{\psi}_{k,i}\bigg( \nabla_{g[u]}\omega_{k,i}
\otimes \left(\nabla^1_{g}u\right)^{\otimes c_{k,i,1}}
\otimes\dots\otimes \left(\nabla^{k+2}_{g}u\right)^{\otimes c_{k,i,k+2}} \bigg)}_{\mt{I}\define}\\
&\mspace{16mu}+\underbrace{\e^{2u}\;\sum_{i=1}^{\mu_k}\sum_{j=1}^{k+2}\;\psi_{k,i,j}\bigg( \omega_{k,i}
\otimes \left(\nabla^1_{g}u\right)^{\otimes c_{k,i,1}} \otimes\dots\otimes
\nabla_{g[u]}\left(\left(\nabla^j_{g}u\right)^{\otimes c_{k,i,j}}\right)
\otimes\dots\otimes \left(\nabla^{k+2}_{g}u\right)^{\otimes c_{k,i,k+2}} \bigg)}_{\mt{II}\define}\\
&\mspace{16mu}+\underbrace{\e^{2u}\;\sum_{i=1}^{\mu_k}\;\tilde{\psi}_{k,i}\bigg( \omega_{k,i}
\otimes \left(\nabla^1_{g}u\right)^{\otimes c_{k,i,1}+1} \otimes \left(\nabla^2_{g}u\right)^{\otimes c_{k,i,2}}
\otimes\dots\otimes \left(\nabla^{k+2}_{g}u\right)^{\otimes c_{k,i,k+2}} \bigg)}_{\mt{III}\define}
\end{split} \]
for suitable $\hat{\psi}_{k,i},\; \psi_{k,i,j},\; \tilde{\psi}_{k,i}\in\PC^{g,\Fol}_{k+1,m_{k,i}}$. Summand III has already the desired form of the right-hand side of \eqref{Riemclaim}. Now we consider I. Writing $V_u(v,X)\define \dif u(X)v +\dif u(v)X -g(v,X)\grad_{g}u$ for $v,X\in T_xM$, we deduce from \eqref{confLC} (by applying the product rule twice):
\[ \begin{split}
&\left(\nabla^{g[u]}_v\omega_{k,i}\right)\left(v_1,\dots,v_{a_{k,i}}\right)
= \left(\nabla^{g}_v\omega_{k,i}\right)\left(v_1,\dots,v_{a_{k,i}}\right)
-\sum_{l=1}^{a_{k,i}}\omega_{k,i}\left(v_1,\dots,v_{l-1},V_u(v,v_l),v_{l+1},\dots,v_{a_{k,i}}\right)\\
&= \left(\nabla^{g}_v\omega_{k,i}\right)\left(v_1,\dots,v_{a_{k,i}}\right)
-\sum_{l=1}^{a_{k,i}}\omega_{k,i}\left(v_1,\dots,v_{l-1},v,v_{l+1},\dots,v_{a_{k,i}}\right)\nabla_{g}^1u(v_l)\\
&\mspace{16mu}-\sum_{l=1}^{a_{k,i}}\omega_{k,i}\left(v_1,\dots,v_{a_{k,i}}\right)\nabla_{g}^1u(v)
+\eval{\,{}\sum_{l=1}^{a_{k,i}}\omega_{k,i}\left(v_1,\dots,v_{l-1},\blank,v_{l+1},\dots,v_{a_{k,i}}\right)}{ \;g(v,v_l)\nabla_{g}^1u(\blank)}_{\!\!g} \,;
\end{split} \]
hence
\[ \begin{split}
&\hat{\psi}_{k,i}\bigg( \nabla_{g[u]}\omega_{k,i}
\otimes \left(\nabla^1_{g}u\right)^{\otimes c_{k,i,1}}
\otimes\dots\otimes \left(\nabla^{k+2}_{g}u\right)^{\otimes c_{k,i,k+2}} \bigg)\\
&= \hat{\psi}_{k,i}\bigg( \nabla_{g}\omega_{k,i}
\otimes \left(\nabla^1_{g}u\right)^{\otimes c_{k,i,1}}
\otimes\dots\otimes \left(\nabla^{k+2}_{g}u\right)^{\otimes c_{k,i,k+2}} \bigg)\\
&\mspace{16mu}+\sum_{l=1}^{a_{k,i}}\varphi_{k,i,l}\bigg( \omega_{k,i}
\otimes \left(\nabla^1_{g}u\right)^{\otimes c_{k,i,1}+1}
\otimes \left(\nabla^2_{g}u\right)^{\otimes c_{k,i,1}} \otimes\dots\otimes \left(\nabla^{k+2}_{g}u\right)^{\otimes c_{k,i,k+2}} \bigg)\\
&\mspace{16mu}+\sum_{l=1}^{a_{k,i}}\chi_{k,i,l}\bigg( g\otimes\omega_{k,i}
\otimes \left(\nabla^1_{g}u\right)^{\otimes c_{k,i,1}+1}
\otimes \left(\nabla^2_{g}u\right)^{\otimes c_{k,i,1}} \otimes\dots\otimes \left(\nabla^{k+2}_{g}u\right)^{\otimes c_{k,i,k+2}} \bigg)
\end{split} \]
for some $\varphi_{i,k,l}\in\PC^{g,\Fol}_{k+1,m_{k,i}}$ and $\chi_{i,k,l}\in\PC^{g,\Fol}_{k+1,m_{k,i}+1}$. This shows that also summand I has the desired form. A similar formula holds for each summand of
\[ \begin{split}
\nabla_{g[u]}\left(\left(\nabla^j_{g}u\right)^{\otimes c_{k,i,j}}\right)
= \sum_{\nu=1}^{c_{k,i,j}} \left(\nabla^j_{g}u\right)^{\otimes\nu-1} \otimes \nabla_{g[u]}\nabla^j_{g}u \otimes \left(\nabla^j_{g}u\right)^{\otimes c_{k,i,j}-\nu} \,,
\end{split} \]
which takes care of term II. Thus $\nabla_{g[u]}^{k+1}\Riem_{g[u]}$ has the required form \eqref{Riemclaim}. This completes the proof of our claim involving \eqref{Riemclaim}.

\smallskip
Let $u\in C^\infty(M,\R)$. To compute $\Phi(u)$ at a point $x\in M$, we choose an $h$-orthonormal basis $(e_1,\dots,e_n)$ of $T_x\Fol$. Then $(e_1[u],\dots,e_n[u])$ defined by $e_i[u]\define \e^{-u}e_i$ is an $h[u]$-orthonormal basis of $T_x\Fol$. Let $\eps_i\define h(e_i,e_i)\in\set{-1,1}$. Thus
\[ \begin{split}
\Phi(u) &= \abs{\nabla^k_{g[u]}\Riem_{g[u]}}_{h[u]}
= \abs{ \sum_{a\in\set{1,\dots,n}^{k+4}}\eps_{a_1}\dots\eps_{a_{k+4}} \left(\nabla^k_{g[u]}\Riem_{g[u]}\right)\left(e_{a_1}[u],\dots,e_{a_{k+4}}[u]\right)^2 }^{1/2}\\
&= \abs{ \e^{-2(k+4)u}\sum_{a\in\set{1,\dots,n}^{k+4}}\eps_{a_1}\dots\eps_{a_{k+4}} \left(\nabla^k_{g[u]}\Riem_{g[u]}\right)\left(e_{a_1},\dots,e_{a_{k+4}}\right)^2 }^{1/2}
= \e^{-(k+4)u}\abs{\nabla^k_{g[u]}\Riem_{g[u]}}_h \,.
\end{split} \]
Let $\eta$ be any Riemannian metric on $M$. For suitable $d\in\N$ and $P\in C^0(M,\Poly{k+2}{d})$ not depending on $u$, we obtain at every $x\in M$, using \eqref{Riemclaim},
\[ \begin{split}
\Phi(u)(x)
&= \e^{-(k+4)u(x)}\abs{\nabla^k_{g[u]}\Riem_{g[u]}}_{h}(x)\\
&= \e^{-(k+2)u(x)}\abs{ \sum_{i=1}^{\mu_k}\;\psi_{k,i}\bigg( \omega_{k,i}
\otimes \left(\nabla^1_{g}u\right)^{\otimes c_{k,i,1}}
\otimes\dots\otimes \left(\nabla^{k+2}_{g}u\right)^{\otimes c_{k,i,k+2}} \bigg) }_h(x)\\
&\leq \e^{-(k+2)u(x)}P(x)\left(\bigabs{\nabla_\eta^1u}_\eta(x),\dots,\bigabs{\nabla_\eta^{k+2}u}_\eta(x)\right) \,.
\end{split} \]
Hence $\Phi$ is a flatzoomer.
\end{proof}

\begin{example}[covariant derivatives of the second fundamental form of a foliation] \label{ex4}
Let $g,h$ be semi-Riemannian metrics on a manifold $M$, let $k\in\N$. Let $\Fol$ be a foliation on $M$ such that $g$ induces a semi-Riemannian metric $g_\Fol$ on the leaves of $\Fol$. (The condition that $g$ induces a semi-Riemannian metric on $\Fol$ is satisfied for instance when $g$ is Riemannian; more generally, when $\Fol$ is $g$-spacelike or $g$-timelike.) Let $\pr_g\colon TM\to T\Fol$ denote the $g$-orthogonal projection onto $T\Fol$; then $\pr_g^\bot\define\id_{TM}-\pr_g$ is pointwise the $g$-orthogonal projection from $T_xM$ onto the $g$-orthogonal complement of $T_x\Fol$ in $T_xM$. We consider the second fundamental form $\SecondFF^\Fol_g$ of $\Fol$ in $(M,g)$ as a field of trilinear forms on $M$; i.e., for all $x\in M$ and $v,w,z\in T_xM$, we let
\[
\SecondFF^\Fol_g(v,w,z) \define g\left(\nabla^g_{\pr_g(v)}\big(\pr_g\compose\hat{w}\big),\, \pr_g^\bot(z)\right) \,,
\]
where $\hat{w}$ is any vector field on $M$ with $\hat{w}(x)=w$ (the choice does not matter). Thus $\SecondFF^\Fol_g$ projects the input vectors $v,w\in T_xM$ to $T_x\Fol$, evaluates the second fundamental form of the $\Fol$-leaf through $x$ in these projections, and translates the resulting vector (which is normal to $T_x\Fol$) into a $1$-form.

Then $\Phi\colon C^\infty(M,\R)\to C^0(M,\R_{\geq0})$ defined by
\[
\Phi(u) \define \abs{\nabla^k_{g[u]}\SecondFF^\Fol_{g[u]}}_{h[u]}
\]
is a flatzoomer.
\end{example}
\begin{proof}
Let $u\in C^\infty(M,\R)$. Clearly $\pr\define\pr_g=\pr_{g[u]}$ and $\pr^\bot\define \pr_g^\bot=\pr_{g[u]}^\bot$. All $v\in TM$ and vector fields $X$ on $M$ satisfy \cite[Theorem 1.159a]{Besse}
\begin{equation} \label{confLC2}
\nabla^{g[u]}_vX = \nabla^g_vX +\dif u(X)v +\dif u(v)X -g(v,X)\grad_gu \,.
\end{equation}
This yields for all $x\in M$ and $v,w,z\in T_xM$:
\begin{equation} \label{confFF} \begin{split}
\SecondFF^\Fol_{g[u]}(v,w,z)
&= g[u]\left(\nabla^{g[u]}_{\pr(v)}\left(\pr\compose\hat{w}\right),\; \pr^\bot(z)\right)\\
&= g[u]\left(\nabla^g_{\pr(v)}\left(\pr\compose\hat{w}\right) -g\left(\pr(v),\pr(w)\right)\grad_gu,\; \pr^\bot(z)\right)\\
&= \e^{2u}\SecondFF^\Fol_g(v,w,z) -\e^{2u}g\left(\pr(v),\pr(w)\right)\, \dif u\left(\pr^\bot(z)\right) \,.
\end{split} \end{equation}

For $r\in\N$, we define $\Pi^g_r$ to be the set of sections in $\End(TM)^{\otimes r}\to M$ which have the form $p_1\otimes\dots\otimes p_r$ with $p_1,\dots,p_r\in\set{\pr,\,\pr^\bot,\,\id_{TM}}$. Using the notation $\Tens_r(M)$ and $\PC^{g,M}_{k,m}$ from the proof of Example \ref{ex3} (where $M$ stands for the foliation on $M$ whose only leaf is $M$), we claim that for every $k\in M$, there exist a number $\mu_k\in\N$ and, for each $i\in\set{1,\dots,\mu_k}$,
\begin{itemize}
\item a number $a_{k,i}\in\N$ and a section $\omega_{k,i}$ in $\Tens_{a_{k,i}}(M)\to M$,
\item numbers $c_{k,i,1},\dots,c_{k,i,k+1}\in\N$,
\item a number $m_{k,i}\in\N$ with $a_{k,i}+\sum_{\nu=1}^{k+1}\nu c_{k,i,\nu} = k+3+2m_{k,i}$,
\item a section $p_{k,i}\in\Pi^g_{k+3+2m_{k,i}}$ and a morphism $\psi_{k,i}\in\PC^{g,M}_{k-1,m_{k,i}}$
\end{itemize}
such that the following equation holds for all $u\in C^\infty(M,\R)$:
\begin{equation} \label{fundclaim}
\nabla^k_{g[u]}\SecondFF^\Fol_{g[u]} = \e^{2u}\sum_{i=1}^{\mu_k}\psi_{k,i}\bigg(\bigg(
\omega_{k,i}\otimes\left(\nabla_g^1u\right)^{\otimes c_{k,i,1}} \otimes\dots\otimes \left(\nabla_g^{k+1}u\right)^{\otimes c_{i,k,k+1}} \bigg)\compose p_{k,i}\bigg) \,.
\end{equation}
This claim is proved by induction over $k$ in a similar way as in the proof of Example \ref{ex3}, with \eqref{confFF} as induction start and \eqref{confLC2} being applied in the induction step. We omit the details.

Let $u\in C^\infty(M,\R)$. An estimate analogous to the end of the proof of Example \ref{ex3} yields now
\[
\forall x\in M\colon\; \Phi(u)(x)
\leq \e^{-(k+1)u(x)}P(x)\left(\bigabs{\nabla_\eta^1u}_\eta(x),\dots,\bigabs{\nabla_\eta^{k+1}u}_\eta(x)\right)
\]
for any Riemannian metric $\eta$ on $M$ and suitable $d\in\N$ and $P\in C^0(\Poly{k+1}{d})$ not depending on $u$. Hence $\Phi$ is a flatzoomer.
\end{proof}

\begin{remark} \label{zoomremark}
When we replace $h[u]$ by $h$ in the definitions of the respective maps $\Phi$ in the Examples \ref{ex2}, \ref{ex3}, \ref{ex4}, then these maps are no longer flatzoomers, as one can tell easily from the proofs above. In contrast, after replacing one or both of the symbols $g[u]$ by $g$ (while keeping $h[u]$) in one of the definitions, the resulting map $\Phi$ is still a flatzoomer. Replacing $\nabla_{g[u]}$ by $\nabla_{h[u]}$ or an arbitrary fixed connection $\tilde{\nabla}$ does not affect the flatzoomer property either.

As mentioned in Section \ref{intro}, when additional geometric objects --- e.g.\ an almost complex structure $J$ or a symplectic form --- are given on $M$, one can construct many other examples of flatzoomers. Up to some power --- e.g.\ $\frac{1}{2}$ in our examples above ---, these will typically be total $h[u]$-contractions $\Phi(u)$ of some tensor field $T_u$ built from the additional objects and $g[u]$ resp.\ $h[u]$; e.g., $T_u$ may be the $h[u]$-covariant derivative of the Nijenhuis tensor $N_J$, which has up to a sign exactly one not a priori vanishing total contraction. After lowering all upper indices of $T_u$ via $h[u]$, we may assume that $T_u$ is a field of multilinear forms. This $T_u$ will usually for some $c\in\Z$ have the form $\e^{cu}$ times a polynomial in $u$ and its derivatives; e.g., $c=4$ in Example \ref{ex3} with $T_u = \nabla^k\Riem_{g[u]}\otimes\nabla^k\Riem_{g[u]}$ (cf.\ \eqref{Riemclaim}), $c=4$ in Example \ref{ex4} with $T_u = \nabla^k\SecondFF_{g[u]}^\Fol\otimes\nabla^k\SecondFF_{g[u]}^\Fol$ (cf.\ \eqref{fundclaim}), and $c=2$ in the Nijenhuis derivative example. If the multilinear form $T_u$ has more than $c$ slots --- which is the case in all these examples ---, then the functional $\Phi$ is a flatzoomer.
\end{remark}

\begin{example} \label{ex5}
Let $M$ be a manifold, let $m\in\N$. For $i\in\set{1,\dots,m}$, let $\Phi_i\colon C^\infty(M,\R)\to C^0(M,\R_{\geq0})$ be a flatzoomer. Assume that $Q\in C^0\big(M\times(\R_{\geq0})^m,\,\R_{\geq0}\big)$ is homogeneous-polynomially bounded in the sense that there exist $r\in\R_{>0}$ and $c\in C^0(M,\R_{\geq0})$ with
\[
\forall x\in M\colon \forall v_1,\dots,v_m\in\cci{0}{1}\colon Q(x,v_1,\dots,v_m) \leq c(x)\cdot\big(v_1+\dots+v_m\big)^r \,.
\]
Then the functional $\Phi\colon C^\infty(M,\R)\to C^0(M,\R_{\geq0})$ defined by
\[
\Phi(u)(x) \define Q\big(x,\Phi_1(u)(x),\dots,\Phi_m(u)(x)\big)
\]
is a flatzoomer; cf. the proof sketch below.

This applies in particular to the function $Q$ given by $Q(x,v) = \sum_{i=1}^mv_i$. Thus $\Phi\define\sum_{i=1}^m\Phi_i$ is a flatzoomer. In this way, finitely many flatzoomers can be controlled by a single flatzoomer: if $\Phi(u)\leq\eps$ holds for some $u\in C^\infty(M,\R)$ and $\eps\in C^0(M,\R_{>0})$, then $\Phi_i(u)\leq\eps$ for every $i\in\set{1,\dots,m}$.

Another example is obtained by taking $m=1$ and $Q(x,s)=s^{1/2}$.
\end{example}
\begin{proof}[Sketch of proof of the flatzoomer property]
This is completely analogous to the proof of \ref{quasiex2} below: in the proof there, just replace every term of the form $\sup\Set{\,\mt{something}(y)}{y\in K_{l+1}\without K_{l-2}}$ by $\mt{something}(x)$; every \scare{$u>u_?$ on $K_{l+1}\without K_{l-2}$} by \scare{$u(x)>u_?(x)$}; and the last sentence by \scare{Thus $\Phi$ is a flatzoomer.}.
\end{proof}

In order to prove Theorem \ref{main}, we have to control not only the functions $\bigabs{\nabla_g^i\Riem_g}{}_g$ but also the inverse $\inj_g^{-1}\in C^0(M,\R_{>0})$ of the injectivity radius. However, the functional $\Phi\colon u\mapsto\inj_{g[u]}^{-1}$ is not a flatzoomer, because $\Phi(u)(x)$ cannot be bounded just in terms of some $k$-jet $\jet^k_xu$ of $u$ \emph{at the point $x$}; one has to take the values of $u$ on a whole neighborhood of $x$ into account. The following more general definition covers such functionals.

\smallskip
For a manifold $M$, let $\Fct(M,\R_{\geq0})$ denote the set of (not necessarily continuous) functions $M\to \R_{\geq0}$.

\begin{definition} \label{quasidef}
Let $\Kex=(K_i)_{i\in\N}$ be a compact exhaustion of a manifold $M$, let $K_{-2}\define K_{-1}\define\leer$. A functional $\Phi\colon C^\infty(M,\R)\to \Fct(M,\R_{\geq0})$ is a \textbf{quasi-flatzoomer for $\Kex$} iff for some --- and hence every --- Riemannian metric $\eta$ on $M$, there exist $k,d\in\N$, $\alpha\in\R_{>0}$, $u_0\in C^0(M,\R)$ and $P\in C^0\big(M,\Poly{k+1}{d}\big)$ such that
\[
\Phi(u)(x) \;\leq\; \sup\Set{\e^{-\alpha u(y)} P(y)\left(u(y),\, \bigabs{\nabla_\eta^1u}_\eta(y),\, \dots,\, \bigabs{\nabla_\eta^ku}_\eta(y)\right)}{y\in K_{i+1}\without K_{i-2}}
\]
holds for all $i\in\N$ and $x\in K_i\without K_{i-1}$ and $u\in C^\infty(M,\R)$ which satisfy $u>u_0$ on $K_{i+1}\without K_{i-2}$.
\end{definition}
\begin{proof}[Proof of \scare{and hence every}]
This is analogous to the proof of \ref{fzdef}.
\end{proof}

\begin{example} \label{quasiex1}
Every flatzoomer $\Phi\colon C^\infty(M,\R)\to C^0(M,\R_{\geq0})$ is a quasi-flatzoomer for every compact exhaustion of $M$.\qed
\end{example}

\begin{example} \label{quasiex2}
Let $\Kex=(K_l)_{l\in\N}$ be a compact exhaustion of a manifold $M$, let $m\in\N$. For $i\in\set{1,\dots,m}$, let $\Phi_i\colon C^\infty(M,\R)\to \Fct(M,\R_{\geq0})$ be a quasi-flatzoomer for $\Kex$. Assume $Q\in C^0\big(M\times(\R_{\geq0})^m,\,\R_{\geq0}\big)$ is homogeneous-polynomially bounded in the sense of Example \ref{ex5}. Then $\Phi\colon C^\infty(M,\R)\to \Fct(M,\R_{\geq0})$ defined by
\[
\Phi(u)(x) \define Q\big(x,\Phi_1(u)(x),\dots,\Phi_m(u)(x)\big)
\]
is a quasi-flatzoomer for $\Kex$.
\end{example}
\begin{proof}
Let $K_{-2}\define K_{-1}\define\leer$, let $\eta$ be a Riemannian metric on $M$. For each $i\in\set{1,\dots,m}$, there exist $k_i,d_i\in\N$, $\alpha_i\in\R_{>0}$ and $b_i,u_i\in C^0(M,\R_{\geq0})$ such that
\[ \begin{split}
\Phi_i(u)(x) &\;\leq\; \sup\Set{\e^{-\alpha_iu(y)}\,b_i(y)\cdot\left(1+\sum_{j=0}^{k_i}\abs{\nabla_\eta^ju}_\eta(y)\right)^{d_i} }{y\in K_{l+1}\without K_{l-2}}
\end{split} \]
holds for all $l\in\N$ and $x\in K_l\without K_{l-1}$ and $u\in C^\infty(M,\R)$ which satisfy $u>u_i$ on $K_{l+1}\without K_{l-2}$. We consider $k\define\max\set{k_1,\dots,k_m}$, $d\define\max\set{d_1,\dots,d_m}$, $\alpha\define\min\set{\alpha_1,\dots,\alpha_m}$ and the pointwise maxima $u_0\define\max\set{u_1,\dots,u_m}$, $b\define\max\set{b_1,\dots,b_m}$ in $C^0(M,\R_{\geq0})$. For every $i\in\set{1,\dots,m}$,
\[ \begin{split}
\Phi_i(u)(x) &\;\leq\; \sup\Set{ \e^{-\alpha u(y)}\,b(y)\cdot\left(1+\sum_{j=0}^k\abs{\nabla_\eta^ju}_\eta(y)\right)^d }{y\in K_{l+1}\without K_{l-2}}
\end{split} \]
holds for all $l\in\N$ and $x\in K_l\without K_{l-1}$ and $u\in C^\infty(M,\R)$ which satisfy $u>u_0$ on $K_{l+1}\without K_{l-2}$. This implies for all $l\in\N$ and $x\in K_l\without K_{l-1}$ and $u>u_0$:
\[ \begin{split}
\Phi(u)(x) &\;=\; Q\big(x,\Phi_1(u)(x),\dots,\Phi_m(u)(x)\big)
\;\leq\; c(x)\cdot\big(\Phi_1(u)(x)+\dots+\Phi_m(u)(x)\big)^r\\
&\;\leq\; m^r c(x)\left(\sup\Set{ b(y)\; \e^{-\alpha u(y)} \left(1+\sum_{j=0}^k\abs{\nabla_\eta^ju}_\eta(y)\right)^d }{y\in K_{l+1}\without K_{l-2}}\right)^r\\
&\;\leq\; \sup\Set{ \e^{-\alpha ru(y)}\; m^rc(y)b(y)^r\;\left(1+\sum_{j=0}^k\abs{\nabla_\eta^ju}_\eta(y)\right)^{dr} }{y\in K_{l+1}\without K_{l-2}} \,.
\end{split} \]
Thus $\Phi$ is a quasi-flatzoomer for $\Kex$.
\end{proof}

%%%%%%%%%%%%%%%%%%%%%%%%%%%%%%%%%%%%%%%%%%%%%%%%%%%%%%%%%%%%%%%%%%%%%%%%%%%%%%
\section{Lower bounds on Riemannian injectivity and convexity radii} \label{radii}
%%%%%%%%%%%%%%%%%%%%%%%%%%%%%%%%%%%%%%%%%%%%%%%%%%%%%%%%%%%%%%%%%%%%%%%%%%%%%%

While the injectivity radius function $\inj_g\colon M\to\oci{0}{\infty}$ of a not necessarily complete Riemannian manifold $(M,g)$ is defined for instance in \cite[p.~118]{Chavel}, an analogously general discussion of the convexity radius $\conv_g\colon M\to\oci{0}{\infty}$ is hard to find in the literature. It does not really matter how we generalize the definition from the complete case \cite[pp.~403, 406]{Chavel}, though, because the metrics we construct in the end in Theorem \ref{maingeneral} will be complete anyway; only the proofs involve metrics that are not a priori complete. So, for simplicity, let us define for a not necessarily complete $(M,g)$ the notion of a strongly convex subset in the usual way \cite[p.~403]{Chavel}, and let us define for $x\in M$ the convexity radius of $g$ at $x$ by
\[
\conv_g(x) \define \sup\Set{\rho\in\bigcci{0}{\inj_g(x)}}{\forall r\in\smallcoi{0}{\rho}\colon\,\mt{$\bigSet{z\in M}{\dist_g(x,z)<r}$ is $g$-strongly convex}} \in \oci{0}{\infty} \,.
\]
It is a priori $\leq$ any other definition we have seen and thus yields a priori the strongest Theorem \ref{maingeneral}.

\smallskip
Our aim in the present section is to prove that the inverse convexity radius (and thus also the inverse injectivity radius) of Riemannian metrics on a manifold $M$ --- or, more generally, on a foliation on $M$ --- is a quasi-flatzoomer $\Phi$ with respect to conformal factors. In Section \ref{mainproof}, we will construct a function $u\in C^\infty(M,\R_{\geq0})$ with $\Phi(u)\leq1$. Then we have $\inj_{g[u]}\geq1$, so in particular $g[u]$ is complete.

At the end of the section, we explain why this construction cannot be generalized to arbitrary semi-Riemannian metrics.

\medskip
The standard lower estimates of the injectivity radius of a Riemannian metric due to Heintze--Karcher \cite[Corollary 2.3.2]{HK} and Cheeger--Gromov--Taylor \cite[Theorem 4.7]{CGT} do apparently not imply the desired quasi-flatzoomer property directly in our situation. But we can just as well argue in a more elementary way. We use the following version of Klingenberg's lemma; it does not assume completeness but involves an assumption on all self-intersecting geodesics, not just on periodic geodesics. (Here a \textbf{self-intersecting geodesic} is the image of a geodesic $\gamma\colon\cci{a}{b}\to M$ with $\gamma(a)=\gamma(b)$.)
\begin{Klingenberg} \label{Klingenberg}
Let $(M,g)$ be a Riemannian manifold, let $x\in M$, let $\delta,\ell,r\in\R_{>0}$. Assume
\begin{itemize}
\item the ball $B^g_r(x)\define\Set{z\in M}{\dist_g(x,z)\leq r}$ (which is closed in $M$) is compact;
\item $\sec_g(\sigma)\leq\delta$ holds for every $y\in B^g_r(x)$ and every $2$-plane $\sigma\subseteq T_yM$;
\item every self-intersecting geodesic in $(M,g)$ which is contained in $B^g_r(x)$ has length $\geq\ell$.
\end{itemize}
Then $\inj_g(x) \geq \min\set{\frac{\pi}{\sqrt{\delta}},\, \frac{\ell}{2},\, r}$.
\end{Klingenberg}
As we do not know a reference for \ref{Klingenberg}, let us review the proof. We need the following two results.
\begin{lemma}[{\protect Morse--Sch\"onberg \cite[Theorem II.6.3]{Chavel}}] \label{MS}
Let $\delta,r\in\R_{>0}$, let $\gamma\colon\cci{0}{r}\to M$ be a unit-speed geodesic in a Riemannian manifold $(M,g)$ such that $\sec_g(\sigma)\leq\delta$ holds for all $t\in\cci{0}{r}$ and all $2$-planes $\sigma\subseteq T_{\gamma(t)}M$. If $r<\frac{\pi}{\sqrt{\delta}}$, then there is no conjugate point of $\gamma(0)$ along $\gamma$.\qed
\end{lemma}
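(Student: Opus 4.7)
The plan is to apply a Sturm-type comparison to the norm of a Jacobi field. Suppose for contradiction that $\gamma(t_0)$ is conjugate to $\gamma(0)$ along $\gamma$ for some $t_0\in\smalloci{0}{r}$; then there is a nontrivial Jacobi field $J$ along $\gamma$ with $J(0)=0=J(t_0)$. I may take $t_0$ to be the \emph{first} positive zero of $J$, so $J(t)\neq 0$ for all $t\in\smallooi{0}{t_0}$. The tangential component $g(J,\dot\gamma)$ of a Jacobi field is affine in $t$ and vanishes at $0$ and $t_0$, hence is identically zero; since the normal component of $J$ is itself a Jacobi field, I may assume $J\perp\dot\gamma$ on $\smallcci{0}{t_0}$.

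Next I would analyze $j(t)\define\abs{J(t)}_g$. On $\smallooi{0}{t_0}$, $j$ is smooth and $jj'=g(J',J)$ (where $J'\define\nabla_{\dot\gamma}J$), so differentiating once more yields
\[
jj'' = g(J'',J)+\abs{J'}_g^2-(j')^2 \,.
\]
The Jacobi equation $J''=-R(J,\dot\gamma)\dot\gamma$ combined with the sectional curvature bound $\sec_g\leq\delta$, together with $J\perp\dot\gamma$ and $\abs{\dot\gamma}_g=1$, gives $g(J'',J)\geq-\delta j^2$. Cauchy--Schwarz yields $(j')^2=g(J',J)^2/j^2\leq\abs{J'}_g^2$. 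Combining these, $j''+\delta j\geq 0$ on $\smallooi{0}{t_0}$.

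Finally I would compare $j$ with the model solution $y(t)\define\sin(\sqrt{\delta}\,t)$, which satisfies $y''+\delta y=0$, $y(0)=0$, and $y>0$ on $\smallooi{0}{\pi/\sqrt{\delta}}$. The Wronskian $W\define j'y-jy'$ satisfies $W'=(j''+\delta j)\,y\geq 0$ on $\smallooi{0}{t_0}\cap\smallooi{0}{\pi/\sqrt{\delta}}$, and the Taylor expansions $J(t)=tJ'(0)+O(t^2)$ and $y(t)=\sqrt{\delta}\,t+O(t^3)$ force $\lim_{t\to 0^+}W(t)=0$. Hence $W\geq 0$ there, so $(j/y)'=W/y^2\geq 0$, and together with $\lim_{t\to 0^+}j(t)/y(t)=\abs{J'(0)}_g/\sqrt{\delta}>0$ this yields $j(t)\geq\abs{J'(0)}_g\,y(t)/\sqrt{\delta}$ on $\smallcci{0}{\min\smallset{t_0,\pi/\sqrt{\delta}}}$ by continuity. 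Since the hypothesis ensures $t_0\leq r<\pi/\sqrt{\delta}$, we have $y(t_0)>0$, and therefore $j(t_0)>0$, contradicting $J(t_0)=0$.

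The main delicate point is that $j$ is only Lipschitz at $t=0$, not smooth; but both $W(0^+)=0$ and $j(t)/y(t)\to\abs{J'(0)}_g/\sqrt{\delta}$ follow cleanly from the Taylor expansion of the Jacobi field at $0$, which is the standard way to bridge this regularity issue. Everything else is a routine application of Sturm comparison.
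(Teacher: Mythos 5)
Your argument is correct and complete. Note that the paper does not prove this lemma at all --- it is stated with an immediate \qed and a citation to Chavel [Theorem II.6.3], since it is the classical Morse--Sch\"onberg theorem. Your proof is the standard Sturm-comparison argument applied to $j=\abs{J}_g$: reducing to a normal Jacobi field via the affineness of $g(J,\dot\gamma)$, deriving $j''+\delta j\geq 0$ from the Jacobi equation, the curvature bound and Cauchy--Schwarz, and then comparing with $\sin(\sqrt{\delta}\,t)$ via the Wronskian, with the boundary behaviour at $t=0$ handled correctly through the expansion $J(t)=tJ'(0)+O(t^2)$. Chavel's own proof runs instead through the index form: for a vector field $X$ vanishing at the endpoints one bounds $I(X,X)\geq\int(\abs{X'}^2-\delta\abs{X}^2)$ and invokes Wirtinger's inequality to conclude positive definiteness when $r<\pi/\sqrt{\delta}$, hence absence of conjugate points. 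The two routes are essentially equivalent in content; yours has the small advantage of being entirely elementary ODE comparison, while the index-form version generalizes more readily to the full Morse index theorem. Either is a perfectly acceptable substitute for the citation.
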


Recall that the \textbf{conjugate radius} $\conj_g(x)\in\oci{0}{\infty}$ of a point $x$ in a (possibly incomplete) Riemannian manifold $(M,g)$ is the number $\inf\Set{\varrho(v)}{v\in T_xM,\, g(v,v)=1}$, where $\varrho(v)$ is the supremum of all $a\in\R_{>0}$ such that the maximal $g$-geodesic $\gamma$ with $\gamma'(0)=v$ is defined on $\cci{0}{a}$ and has no conjugate point of $\gamma(0)=x$ along $\bigrestrict{\gamma}{\cci{0}{a}}$.

\begin{lemma} \label{babyKling}
Let $(M,g)$ be a Riemannian manifold, let $x\in M$, let $\ell,r\in\R_{>0}$. Assume that
\begin{itemize}
\item $B\define B^g_r(x)$ is compact;
\item every self-intersecting geodesic in $(M,g)$ which is contained in $B$ has length $\geq\ell$.
\end{itemize}
Then $\inj_g(x) \geq \min\bigset{\conj_g(x),\, \ell/2,\, r}$.
\end{lemma}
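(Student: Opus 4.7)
The plan is to adapt Klingenberg's classical argument to the incomplete setting, with compactness of $B\define B^g_r(x)$ supplying what geodesic completeness would ordinarily give. Set $\rho\define\min\set{\conj_g(x),\ell/2,r}$ and argue by contradiction: suppose $t_0\define\inj_g(x)<\rho$. The aim is to produce a geodesic loop at $x$ of length exactly $2t_0$ whose image lies in $B$; being a self-intersecting geodesic inside $B$, such a loop has length $\geq\ell$ by hypothesis, forcing $2t_0\geq\ell$ and contradicting $t_0<\ell/2$.

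Since $t_0<r$ and $B$ is compact, every unit-speed geodesic starting at $x$ of length at most $t_0$ stays in $B$ and can be extended slightly beyond, so $\exp_x$ is defined and smooth on an open ball $B_\rho(0)\subset T_xM$; because $t_0<\conj_g(x)$, it is moreover a local diffeomorphism at every point of the closed ball $\bar B_{t_0}(0)$. Since $\exp_x$ fails to be a diffeomorphism on any ball $B_{t_0+\delta}(0)$ with $\delta>0$, one gets sequences $(v_n),(w_n)$ in $T_xM$ with $v_n\neq w_n$, $|v_n|,|w_n|\to t_0$ and $\exp_x(v_n)=\exp_x(w_n)$. By finite-dimensionality of $T_xM$, convergent subsequences yield limits $v,w$ with $|v|=|w|=t_0$ and $\exp_x(v)=\exp_x(w)\invdef y$; the local-diffeomorphism property at the common limit excludes $v=w$. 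The resulting unit-speed geodesics $\gamma_1,\gamma_2\colon\cci{0}{t_0}\to M$ from $x$ to $y$ are both minimizing, since $\exp_x$ is a diffeomorphism on every open ball of radius strictly less than $t_0$.

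A standard first-variation shortcut argument then forces $\gamma_1'(t_0)=-\gamma_2'(t_0)$: if instead $X\define-\gamma_1'(t_0)-\gamma_2'(t_0)$ were nonzero, then $g(X,\gamma_i'(t_0))<0$ for $i\in\set{1,2}$, and the inverse function theorem applied at $v$ and $w$ together with the first variation formula would produce, for small $s>0$, two distinct geodesics from $x$ to $\exp_y(sX)$ of lengths both strictly less than $t_0$, contradicting $t_0=\inj_g(x)$. Hence the concatenation of $\gamma_1$ with the time-reversal of $\gamma_2$ is a smooth geodesic loop $\gamma\colon\cci{0}{2t_0}\to M$ at $x$. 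Every point of $\gamma$ lies on a minimizing segment from $x$ of length $\leq t_0$, so its image is contained in $\bar B^g_{t_0}(x)\subseteq B$, completing the contradiction.

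The main obstacles lie in the two places where the incompleteness of $(M,g)$ has to be circumvented: ensuring $\exp_x$ is defined and a local diffeomorphism on a large enough ball in $T_xM$, and extracting genuine limits of the non-injectivity sequences. Both are handled by combining compactness of $B$ with the non-conjugacy inequality $t_0<\conj_g(x)$; the latter is also essential for the opposite-tangent step, where it is what lets the shortcut point $\exp_y(sX)$ inherit two distinct minimizing connections back to $x$ via local inversion of $\exp_x$ near $v$ and $w$.
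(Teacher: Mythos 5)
Your argument is correct and takes essentially the same route as the paper's own (sketched) proof, which reduces to the dichotomy ``conjugate point on the boundary sphere or non-injectivity of $\exp_x$'' and then defers to the classical Klingenberg loop construction of Cheeger--Ebin, Lemma 5.6. You have simply written out the details of that construction --- compactness of $B$ giving definedness of $\exp_x$ on a sufficiently large ball, the limit extraction of two distinct minimizing geodesics, and the first-variation argument forcing opposite endpoint tangents --- exactly as the paper intends.
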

\begin{proof}[Sketch of proof]
Assume $\rho\define\inj_g(x)<r$. Because of the compactness of $B$, the tangent space ball $B^g_{x,r}(0)\define \Set{v\in T_xM}{\abs{v}_g\leq r}$ is contained in the domain of $\exp^g_x$. Since $\inj_g(x)$ is the supremum of all $s\in\R_{>0}$ for which $\exp^g_x$ restricted to $B^g_{x,s}(0)$ is a smooth embedding, we have $\inj_g(x)=\conj_g(x)$ (if $\exp^g_x$ has a critical point at the boundary of $B^g_{x,\rho}(0)$) or $\exp^g_x$ is not injective on $B^g_{x,\rho}(0)$. In the latter case, there is a point $y\in\inte(B)$ with $\dist_g(x,y) = \inj_g(x)$ for which $x$ and $y$ are connected by two distinct length-minimizing geodesics such that $y$ is not conjugate to $x$ along either. Now repeat the proof of the complete case \cite[Lemma 5.6]{CE} verbatim to get a self-intersecting geodesic of length $2\inj_g(x)$.
\end{proof}

\begin{proof}[Proof of \ref{Klingenberg}]
The compactness assumption implies that every maximal unit-speed geodesic starting in $x$ is defined at least on $\cci{0}{r}$. By the sectional curvature assumption, Lemma \ref{MS} yields therefore $\conj_g(x)\geq\min\set{\pi/\delta^{1/2},\, r}$. From Lemma \ref{babyKling}, we obtain $\inj_g(x) \geq \min\set{\pi/\delta^{1/2},\, \ell/2,\, r}$.
\end{proof}

In order to derive a similar estimate for the convexity radius, we recall J.~H.~C.\ Whitehead's lower bound \cite[Theorem 5.14]{CE}:
\begin{lemma} \label{Whitehead}
Let $(M,g)$ be a Riemannian manifold, let $x\in M$, let $\delta,\iota,r\in\R_{>0}$. Assume that
\begin{itemize}
\item the ball $B\define B_r^g(x)$ is compact;
\item $\sec_g(\sigma)\leq\delta$ holds for every $y\in B$ and every $2$-plane $\sigma\subseteq T_yM$;
\item $\inj_g(y)\geq\iota$ holds for every $y\in B$.
\end{itemize}
Then $\conv_g(x) \geq \min\set{\frac{\pi}{2\sqrt{\delta}},\, \frac{\iota}{2},\, r}$.
\end{lemma}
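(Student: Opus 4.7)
The plan is to show that, for every $\rho < \rho_0 := \min\bigl\{\pi/(2\sqrt\delta),\, \iota/2,\, r\bigr\}$, the open ball $B_\rho^g(x)$ is strongly $g$-convex; by definition of $\conv_g$ this gives $\conv_g(x) \geq \rho_0$. Given $p, q \in B_\rho^g(x)$, the triangle inequality combined with the injectivity hypothesis yields $\dist_g(p,q) < 2\rho < \iota \leq \inj_g(p)$, so there exists a unique minimizing $g$-geodesic $\gamma\colon[0,L]\to M$ from $p$ to $q$, with $L < 2\rho$. A second use of the triangle inequality gives $\dist_g(x, \gamma(t)) \leq \rho + L/2 < 2\rho < \iota$ for every $t$, so $\gamma$ stays inside the injectivity ball of $x$ and, after absorbing a small factor into $\rho$ to secure $\gamma \subseteq B$, the curvature bound $\sec_g \leq \delta$ holds along $\gamma$; in particular $\gamma$ misses the cut locus of $x$, hence $\phi(t) := \tfrac12\dist_g(x, \gamma(t))^2$ is smooth on $[0,L]$.

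The crux is to promote the a priori containment $\gamma \subset B_{2\rho}^g(x)$ to $\gamma \subset B_\rho^g(x)$, i.e., to show $\phi < \rho^2/2$ throughout $[0,L]$ rather than merely at its endpoints. My main tool would be Hessian comparison: under $\sec_g \leq \delta$, the Rauch theorem yields, at every $\gamma(t)$ where $s := \dist_g(x, \gamma(t)) < \pi/(2\sqrt\delta)$, the estimate
\[
\phi''(t) \;\geq\; g(\gamma'(t), \nabla s)^2 \,+\, s\sqrt\delta\cot(\sqrt\delta\,s)\,|\gamma'(t)_\perp|_g^2 \;>\; 0
\]
whenever $\gamma'(t) \neq 0$ (both summands nonnegative on that radius range, the second strictly positive). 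Consequently $\phi$ is strictly convex along any subinterval of $[0,L]$ on which that radius bound holds, and so cannot exceed its endpoint values there.

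The main obstacle is precisely the gap between the a priori estimate $\dist_g(x, \gamma) < 2\rho$ and the radius $\pi/(2\sqrt\delta)$ required for Hessian positivity: the hypothesis $\rho < \pi/(2\sqrt\delta)$ does not directly entail $2\rho < \pi/(2\sqrt\delta)$. I would close this gap along the classical Toponogov/CAT($\delta$) route taken in Whitehead's original proof: a Riemannian manifold with $\sec_g \leq \delta$ is locally CAT($\delta$) inside any metric ball of radius $< \pi/(2\sqrt\delta)$ contained in an injectivity ball, and the CAT($\delta$) inequality directly implies convexity of $\phi$ along $\gamma$, forcing $\phi(t) \leq \max(\phi(0), \phi(L)) < \rho^2/2$. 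A more elementary alternative is a bootstrap on $T := \sup\bigl\{\tau \in [0,L] : \phi < \rho^2/2 \text{ on } [0,\tau]\bigr\}$: the Hessian estimate genuinely applies on $[0,T]$ (where $\dist_g(x,\gamma) \leq \rho < \pi/(2\sqrt\delta)$), making $\phi$ strictly convex there, so $T < L$ would force $\phi(T) = \rho^2/2$ and $\phi'(T) > 0$; the symmetric argument run from the endpoint $L$ then produces the opposite sign at the corresponding right endpoint, yielding a contradiction and forcing $T = L$.
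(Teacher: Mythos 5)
First, a point of reference: the paper does not prove this lemma at all --- it is quoted as a known result with the citation [CE, Theorem 5.14] --- so there is no in-paper argument to compare yours with. Your overall strategy (reduce to showing that $\phi(t)=\tfrac12\dist_g(x,\gamma(t))^2$ stays below $\rho^2/2$, using that $\Hess_g(\tfrac12\dist_g(x,\cdot)^2)$ is positive definite wherever $\dist_g(x,\cdot)<\min\{r,\pi/(2\sqrt\delta)\}$) is the right one, and you correctly isolate the crux: a priori the minimizing geodesic only stays in $B^g_{2\rho}(x)$, and $2\rho$ may exceed both $r$ (so the curvature hypothesis is unavailable) and $\pi/(2\sqrt\delta)$ (so the Hessian need not be positive). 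But neither of your two proposed resolutions closes this gap. The CAT($\delta$) route is circular: the standard proof that small balls are CAT($\delta$) (Alexandrov's patchwork) takes their strong convexity --- i.e., precisely Whitehead's theorem --- as input. The bootstrap fails at its last step: writing $T$ for the first time $\phi$ reaches $\rho^2/2$ and $T'$ for the last such time, your argument correctly yields $\phi'(T)>0$ and $\phi'(T')<0$ with $T\le T'$; but if $T<T'$ this is no contradiction, because you control nothing on $(T,T')$, where $\gamma$ may sit at distance up to $2\rho$ from $x$ --- and $T<T'$ is exactly what happens if $\gamma$ genuinely leaves the ball ($\phi'(T)>0$ already forces $\phi>\rho^2/2$ just after $T$, so the two crossing times cannot coincide). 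A length count does not save you either: it only gives $\min\{r,\pi/(2\sqrt\delta)\}<2\rho$, which is consistent with the hypotheses. A single-geodesic argument of this kind can only prove the lemma with the weaker constants $\pi/(4\sqrt\delta)$ and $r/2$ (for which $2\rho$ stays inside the comparison range).

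The missing idea that yields the stated constants is a connectedness argument in the endpoints. Fix $\rho<\min\{\pi/(2\sqrt\delta),\iota/2,r\}$ and let $A$ be the set of pairs $(p,q)\in B^g_\rho(x)\times B^g_\rho(x)$ (closed balls, in the paper's notation) whose unique minimizing geodesic $\gamma_{pq}$ lies in $B^g_\rho(x)$. Then $A$ contains the diagonal; $A$ is closed because $\gamma_{pq}(t)=\exp^g_p\big(t\,(\exp^g_p)^{-1}(q)\big)$ depends continuously on $(p,q)$ (here $\dist_g(p,q)<\iota\le\inj_g(p)$ enters); and $A$ is open because if $\gamma_{pq}\subseteq B^g_\rho(x)$, then for $(p',q')$ close to $(p,q)$ the geodesic $\gamma_{p'q'}$ still lies in $B^g_R(x)$ for some fixed $R$ with $\rho<R<\min\{\pi/(2\sqrt\delta),\iota,r\}$, so your Hessian estimate applies along \emph{all} of $\gamma_{p'q'}$, making $\phi$ convex there and forcing $\max\phi\le\max\{\phi(0),\phi(L)\}\le\rho^2/2$, i.e.\ $(p',q')\in A$. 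Since $B^g_\rho(x)\times B^g_\rho(x)$ is connected, $A$ is everything; strictness of the convexity then upgrades this to strong convexity of the open balls of radius $<\min\{\pi/(2\sqrt\delta),\iota/2,r\}$, which is what the paper's definition of $\conv_g(x)$ requires. With this replacement (and keeping your correct preliminary steps on existence, uniqueness and smoothness), the proof goes through.
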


\begin{corollary}[to \ref{Klingenberg} and \ref{Whitehead}] \label{convestimate}
Let $(M,g)$ be a Riemannian manifold, let $x\in M$, let $\delta,\ell,r\in\R_{>0}$. Assume
\begin{itemize}
\item the ball $B\define B_r^g(x)$ is compact;
\item $\sec_g(\sigma)\leq\delta$ holds for every $y\in B$ and every $2$-plane $\sigma\subseteq T_yM$;
\item every self-intersecting geodesic in $(M,g)$ which is contained in $B$ has length $\geq\ell$.
\end{itemize}
Then $\conv_g(x) \geq \frac{1}{2}\min\set{\frac{\pi}{\sqrt{\delta}},\, \frac{\ell}{2},\, \tfrac{r}{2}}$.
\end{corollary}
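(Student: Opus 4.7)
The plan is to combine Klingenberg's lemma (applied uniformly on a smaller ball around $x$) with Whitehead's lemma (applied at $x$ with the resulting uniform injectivity radius bound). The only real decision is how to split the given radius $r$ between the two applications so that the factor $\tfrac{1}{2}$ in the final bound comes out cleanly; the choice $r/2$ will work.

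First I would set $B' \define B_{r/2}^g(x)$ and show that Klingenberg's lemma \ref{Klingenberg} applies with ball of radius $r/2$ to every point $y \in B'$. By the triangle inequality, $B_{r/2}^g(y) \subseteq B_r^g(x) = B$, so $B_{r/2}^g(y)$ is a closed subset of the compact set $B$ and hence compact; the sectional curvature bound $\sec_g \leq \delta$ on $B$ restricts to the same bound on $B_{r/2}^g(y)$; and every self-intersecting geodesic contained in $B_{r/2}^g(y)$ is also contained in $B$, so it has length $\geq \ell$ by hypothesis. Klingenberg's lemma therefore yields, for every $y \in B'$,
\[
\inj_g(y) \;\geq\; \iota \define \min\set{\tfrac{\pi}{\sqrt{\delta}},\, \tfrac{\ell}{2},\, \tfrac{r}{2}} \,.
\]

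Next I would apply Whitehead's lemma \ref{Whitehead} at $x$ with radius $r/2$, curvature upper bound $\delta$, and injectivity radius lower bound $\iota$. The three hypotheses hold: $B'$ is compact as a closed subset of $B$; $\sec_g \leq \delta$ on $B' \subseteq B$; and $\inj_g \geq \iota$ on $B'$ by the previous step. The conclusion is
\[
\conv_g(x) \;\geq\; \min\set{\tfrac{\pi}{2\sqrt{\delta}},\, \tfrac{\iota}{2},\, \tfrac{r}{2}}
\;=\; \min\set{\tfrac{\pi}{2\sqrt{\delta}},\, \tfrac{\pi}{2\sqrt{\delta}},\, \tfrac{\ell}{4},\, \tfrac{r}{4},\, \tfrac{r}{2}}
\;=\; \tfrac{1}{2}\min\set{\tfrac{\pi}{\sqrt{\delta}},\, \tfrac{\ell}{2},\, \tfrac{r}{2}} \,,
\]
which is the claimed estimate.

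There is no serious obstacle here: both ingredients are cited lemmas and the proof is essentially bookkeeping. The only thing worth double-checking is that the radius split $r = r/2 + r/2$ is the one that produces the stated constant $\tfrac{1}{2}$ in front of the overall minimum; any other split would either weaken the $r$-term in Klingenberg or the radius parameter in Whitehead, and it is convenient that the symmetric choice happens to give a result whose three entries all share the common factor $\tfrac{1}{2}$.
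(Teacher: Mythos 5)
Your proof is correct and coincides with the paper's own argument: apply Klingenberg's lemma on balls $B^g_{r/2}(y)\subseteq B$ for each $y$ in $B'=B^g_{r/2}(x)$ to get the uniform bound $\inj_g\geq\iota=\min\{\pi/\sqrt{\delta},\ell/2,r/2\}$ on $B'$, then feed this into Whitehead's lemma with radius $r/2$. The bookkeeping at the end is also right.
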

\begin{proof}
$B'\define B^g_{r/2}(x)\subseteq B$ is compact. For every $y\in B'$, the ball $B_y\define B^g_{r/2}(y)$ is contained in $B$. Thus $B_y$ is compact, $\sec_g\leq\delta$ holds on $B_y$, and every self-intersecting geodesic in $B_y$ has length $\geq\ell$. Hence Klingen\-berg's lemma implies $\inj_g(y) \geq \iota\define \min\bigset{\pi/\sqrt{\delta},\, \ell/2,\, r/2}$. Lemma \ref{Whitehead}, with $r/2$ and $B'$ in the roles of $r$ and $B$, yields
\[
\conv_g(x) \geq \tfrac{1}{2}\min\set{\tfrac{\pi}{\sqrt{\delta}},\, \iota,\, r}
= \tfrac{1}{2}\min\set{\tfrac{\pi}{\sqrt{\delta}},\, \tfrac{\ell}{2},\, \tfrac{r}{2}} \,.\qedhere
\]
\end{proof}

\medskip
Now we introduce the quantities that Theorem \ref{RiemannInj} is about:
\begin{definition}
Let $\Fol$ be a foliation on a manifold $M$, let $g$ be a Riemannian metric on $\Fol$ (cf.\ \ref{ex3}). For each leaf $L$ of $\Fol$, $g_L$ denotes the Riemannian metric on $L$ that is the restriction of $g$. We define $\conv_g^\Fol\colon M\to\oci{0}{\infty}$ to be the function whose restriction to each $\Fol$-leaf $L$ is $\conv_{g_L}\colon L\to\oci{0}{\infty}$. Analogously, $\inj_g^\Fol\colon M\to\oci{0}{\infty}$ denotes the function whose restriction to each $\Fol$-leaf $L$ is $\inj_{g_L}\in C^0(L,\oci{0}{\infty})$. Then $1/\conv_g^\Fol$ and $1/\inj_g^\Fol$ are functions $M\to\coi{0}{\infty}$.
\end{definition}

\begin{remark}
In the situation of the preceding definition, the functions $\conv_g^\Fol$ and $\inj_g^\Fol$ are in general not continuous. For example, take the foliation $\Fol$ on $M\define (\R^n\times\R)\without\set{(0_n,0)}$ whose leaves are the sets $L_0\define(\R^n\without\set{0_n})\times\set{0}$ and $L_t\define\R^n\times\set{t}$ with $t\in\R\without\set{0}$, and take $g$ to be the metric on $\Fol$ whose restriction to each $L_i$ is the euclidean metric there. At each point of $L_0$, $\conv_g^\Fol = \inj_g^\Fol$ is not continuous, because it is constant $\infty$ on $\bigcup_{t\in\R\without\set{0}}L_t$ but finite-valued on $L_0$.

This is the reason why in Definition \ref{quasidef} we allowed the $\Phi(u)$ to be arbitrary functions $M\to\R$ instead of continuous ones.
\end{remark}

Now we are ready to prove the main result of this section.

\begin{theorem} \label{RiemannInj}
Let $\Fol$ be a foliation on a manifold $M$, let $g$ be a Riemannian metric on $\Fol$, let $\Kex=(K_i)_{i\in\N}$ be a compact exhaustion of $M$. Then $\Phi\colon C^\infty(M,\R)\to \Fct(M,\R_{\geq0})$ given by
\[
\Phi(u) \define 1/\conv_{g[u]}^\Fol
\]
is a quasi-flatzoomer for $\Kex$. The same holds with $\inj_{g[u]}^\Fol$ instead of $\conv_{g[u]}^\Fol$.
\end{theorem}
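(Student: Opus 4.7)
The plan is to bound $1/\conv_{g[u]_L}(x)$ leaf-by-leaf via Corollary \ref{convestimate}, by controlling each of the three quantities $\pi/\sqrt{\delta}$, $\ell/2$, $r/2$ appearing in its lower bound by a quasi-flatzoomer expression. Fix auxiliary Riemannian metrics $\eta$ on $M$ and $\tilde\eta$ on $\Fol$, as well as $C\in C^0(M,\R_{>0})$ with $\tfrac{1}{C}\tilde\eta\leq g\leq C\tilde\eta$. Using the $\eta$-distance from $x$ to the complement of $K_{l+1}\setminus K_{l-2}$ together with the leafwise $\tilde\eta$-convexity-radius function, choose a continuous $r_0\colon M\to\R_{>0}$ such that for every $l\in\N$ and every $x\in K_l\setminus K_{l-1}$ the reference ball $B(x)\define B_{r_0(x)}^{\tilde\eta_L}(x)$ (with $L$ the leaf through $x$) is compact, $\tilde\eta_L$-strongly convex, diffeomorphic to a Euclidean ball, and contained in $K_{l+1}\setminus K_{l-2}$ as a subset of $M$.

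Now set $r(x,u)\define c_0(x)\exp\bigl(\inf_{y\in B(x)}u(y)\bigr)$ with $c_0\in C^0(M,\R_{>0})$ small enough that $B^{g[u]_L}_{r(x,u)}(x)\subseteq B(x)$ for every $u$; this $g[u]_L$-ball is then compact and may serve as the radius in Corollary \ref{convestimate}, and $2/r(x,u)\leq C_1(x)\sup_{y\in B(x)}\e^{-u(y)}$ with $C_1$ continuous has quasi-flatzoomer form with $\alpha=1$. For the curvature, define $\delta(x,u)\define C_2\sup_{y\in B(x)}\bigabs{\Riem_{g[u]_L}}_{g[u]_L}(y)$ with $C_2$ a dimensional constant forcing $\sec_{g[u]_L}(\sigma)\leq\delta$ for every $2$-plane $\sigma$ tangent to $L$ at any point of $B(x)$. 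Example \ref{ex3} (with $k=0$, $h=g$) shows $u\mapsto\bigabs{\Riem_{g[u]_L}}_{g[u]_L}$ is a flatzoomer; Examples \ref{quasiex1} and \ref{quasiex2} (the latter with $Q(s)=s^{1/2}$) then yield a quasi-flatzoomer bound on $\sqrt{\delta}/\pi$.

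The technical heart is the length term $2/\ell$: I plan to show that for some continuous $u_0\colon M\to\R$, whenever $u>u_0$ on $B(x)$, every self-intersecting $g[u]_L$-geodesic contained in $B^{g[u]_L}_{r(x,u)}(x)$ has $g[u]_L$-length at least $\e^{\inf_{B(x)}u}r_0(x)/C_3(x,u)$, where $C_3$ is continuous in $x$ and polynomially bounded in $|\nabla_\eta u|_\eta$ and $|\nabla_\eta^2 u|_\eta$. The idea is a quantitative perturbation argument in $\tilde\eta_L$-normal coordinates centered at $x$ on $L$: in these coordinates the metric $g[u]_L=\e^{2u}g$ equals, after the constant linear change of basis diagonalising $\e^{2u(x)}g(x)$, a $C^2$-perturbation of the Euclidean metric on the coordinate ball of radius $r_0(x)$, whose $C^2$-size is bounded by fixed data from $g$ and $\tilde\eta$ together with the $C^2$-size of $u$ on $B(x)$. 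For $u$ large enough --- controlled by $u_0$ --- this perturbation becomes small enough that the $g[u]_L$-exponential map at any $p\in B(x)$ is injective on a $g[u]_L$-ball of radius comparable to $\e^{u(p)}r_0(x)$, the flat model admitting no self-intersecting geodesics in a Euclidean ball at all. Consequently, any self-intersecting $g[u]_L$-geodesic trapped in $B^{g[u]_L}_{r(x,u)}(x)$ has the claimed length, giving a quasi-flatzoomer bound on $2/\ell$.

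Combining the three estimates via Corollary \ref{convestimate} and packaging through Example \ref{quasiex2} (finitely many quasi-flatzoomers combine through $Q(v)=\max(v_1,\dots,v_m)$ to a quasi-flatzoomer, that $Q$ being homogeneous-polynomially bounded by $v_1+\dots+v_m$) produces the required quasi-flatzoomer bound for $\Phi(u)(x)=1/\conv_{g[u]_L}(x)$ as a supremum over $B(x)\subseteq K_{l+1}\setminus K_{l-2}$. The injectivity-radius statement is entirely parallel, invoking Klingenberg's lemma \ref{Klingenberg} instead of Corollary \ref{convestimate}. The main obstacle is the uniform perturbation argument of paragraph $3$: all thresholds, the exponent $\alpha$, and the comparison constants must be shown to depend continuously on $x$, and one must keep careful track of how derivatives of $u$ enter the perturbation estimate so that the final dependence is polynomial in $|\nabla_\eta u|_\eta$ and $|\nabla_\eta^2 u|_\eta$, as required for the quasi-flatzoomer form.
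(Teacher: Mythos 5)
Your overall architecture is the same as the paper's: reduce to Corollary \ref{convestimate}, bound the curvature term via Example \ref{ex3} plus the square-root construction of \ref{ex5}/\ref{quasiex2}, and choose the radius $r$ proportional to $\e^{\inf u}$ over a fixed reference ball sitting inside a chart and inside $K_{l+1}\without K_{l-2}$. The gap is in your third paragraph, the bound on the length $\ell$ of self-intersecting geodesics, which is indeed the technical heart. The mechanism you propose --- ``for $u$ large enough, controlled by $u_0$, the perturbation becomes small enough that $\exp^{g[u]_L}_p$ is injective on a ball of radius comparable to $\e^{u(p)}r_0(x)$'' --- cannot work. The threshold $u_0$ in Definition \ref{quasidef} is a pointwise lower bound on $u$ only; it gives no control whatsoever on $\bigabs{\!\dif u}$ or $\bigabs{\nabla^2u}$, and the $C^2$-distance of the normalized metric $\e^{2(u-u(x))}g$ from its flat model on a coordinate ball of fixed radius $r_0(x)$ is governed precisely by those derivatives (the metric varies by a factor $\e^{2\operatorname{\mt{osc}}u}$ over the ball). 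In the eventual application (the proof of Theorem \ref{FlatzoomThemAll}) the conformal factor climbs by arbitrarily large amounts across each collar, so its derivatives are unbounded and the perturbation is never small. Moreover the intermediate claim itself is false with a constant independent of the derivatives of $u$: already for $g$ flat, a rapidly oscillating $u$ makes the curvature $\sim\e^{-2u}\abs{\nabla^2u}$ large and positive, producing conjugate points at distance far smaller than $\e^{u(p)}r_0(x)$.

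What is needed, and what you correctly anticipate in your last sentence but do not supply, is a length bound that \emph{degrades polynomially} in the derivatives of $u$. The paper gets one involving only first derivatives, by an elementary argument that avoids any smallness hypothesis: in chart coordinates the Christoffel symbols of $g[u]$ satisfy $\bigabs{{}^{g[u]}\Gamma^c_{ab}}\leq A\big(1+\bigabs{\!\dif u}_g\big)$ (the $\e^{2u}$ factors cancel). Given a self-intersecting unit-speed $g[u]_L$-geodesic $\gamma$ of length $\ell$ in the ball, one picks $s_0$ minimizing $u\compose\gamma$ and, since $\int_0^\ell\gamma'=0$ for a closed coordinate curve, some $s_1$ with $\smalleval{\gamma'(s_1)}{\gamma'(s_0)}_{\eucl}\leq0$; then $1=\abs{\gamma'(s_0)}_{g[u]}$ is estimated against $\abs{\gamma'(s_1)-\gamma'(s_0)}_{\eucl}=\bigabs{\int_{s_0}^{s_1}\gamma''}_{\eucl}$ via the geodesic equation, yielding $4/\ell\leq H\sup\e^{-u}\big(1+\bigabs{\!\dif u}_g\big)$, which is exactly the required quasi-flatzoomer form. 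If you try to make your perturbation argument quantitative you will be forced into essentially this computation; as written, your proof of the length estimate does not go through.
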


\begin{proof}
Let $\mathcal{A}$ be a foliation atlas for $\Fol$. We choose a (parametrized) locally finite cover $\mathcal{U}=(U_i)_{i\in\N}$ of $M$ by open sets $U_i$ each of which has compact closure contained in the domain of some $\mathcal{A}$-chart $\varphi_i$.

Let $n\define\dim\Fol$. For $i\in\N$, $\varphi_i$ induces for each leaf $L$ coordinates on $U_i\cap L$. For any $u\in C^\infty(M,\R)$, we can consider the Christoffel symbols ${}^{g[u]}\Gamma_{ab}^c$ of the (leafwise) metric $g[u]$ with respect to these coordinates. Since $U_i$ has compact closure in $\dom(\varphi_i)$, there exists a constant $A_i\in\R_{>0}$ such that
\[
\abs{{}^{g[u]}\Gamma_{ab}^c} \leq A_i\left(1 +\bigabs{\!\diff u}_g\right)
\]
holds pointwise on $U_i\cap L$ for every $\Fol$-leaf $L$ and every $u\in C^\infty(M,\R)$: we have
\[ \begin{split}
{}^{g[u]}\Gamma_{ab}^c &= \frac{1}{2}\sum_{m=1}^ng[u]^{cm}\big(\partial_ag[u]_{bm} +\partial_bg[u]_{am} -\partial_mg[u]_{ab}\big)\\
&= \frac{1}{2}\sum_{m=1}^n\frac{g^{cm}}{\e^{2u}}\bigg(\e^{2u}\big(\partial_ag_{bm} +\partial_bg_{am} -\partial_mg_{ab}\big) +2\e^{2u}\big(\partial_au\;g_{bm} +\partial_bu\;g_{am} -\partial_mu\;g_{ab}\big)\bigg) \,.
\end{split} \]

For $i\in\N$, we denote the (leafwise) euclidean metric on $\bigrestrict{\Fol}{\dom(\varphi_i)}$, obtained via $\varphi_i$-pullback, by $\eucl_i$. There exists a constant $C_i\in\R_{>0}$ such that
\begin{align*}
C_i\abs{v}_{\eucl_i} &\geq \abs{v}_g \geq C_i^{-1}\abs{v}_{\eucl_i}
\end{align*}
holds for every $\Fol$-leaf $L$ and every $x\in U_i\cap L$ and every $v\in T_xL$. We define $H_i\define 4n^2A_iC_i^3\in\R_{>0}$.

Since $\mathcal{U}$ is locally finite, there exists an $H\in C^0(M,\R_{>0})$ with $\forall x\in M\colon \forall i\in\N\colon \big(x\in U_i \implies H(x)\geq H_i\big)$.

\smallskip
The Examples \ref{ex3} and \ref{ex5} (with $Q(x,s)=\frac{2}{\pi}s^{1/2}$) tell us that $\Phi_0\colon C^\infty(M,\R)\to C^0(M,\R_{\geq0})$ given by
\[
\Phi_0(u) \define \frac{2}{\pi}\abs{\Riem_{g[u]}}^{1/2}_{g[u]}
\]
is a flatzoomer. Moreover, $\Phi_1\colon C^\infty(M,\R)\to C^0(M,\R_{\geq0})$ given by
\[
\Phi_1(u) \define \e^{-u}H\cdot\left(1 +\bigabs{\!\diff u}_g\right)
\]
is obviously a flatzoomer.

Let $K_{-2}\define K_{-1}\define\leer$. There exists a (sufficiently large) function $u_1\in C^0(M,\R)$ such that for every $i\in\N$, for every leaf $L$ and for every $x\in\left(K_i\without K_{i-1}\right)\cap L$, there is a $j\in\N$ with
\[
B^{g[u_1]_L}_1(x) \subseteq U_j\cap\left(K_{i+1}\without K_{i-2}\right) \,.
\]
Trivially, also $\Phi_2\colon C^\infty(M,\R)\to C^0(M,\R_{\geq0})$ given by $\Phi_2(u) \define 4\e^{-u}\e^{u_1}$ is a flatzoomer.

\smallskip
By Example \ref{ex5}, $\Psi\define\Phi_0+\Phi_1+\Phi_2$ is a flatzoomer; i.e., there exist $k,d\in\N$, $\alpha\in\R_{>0}$, $u_0\in C^0(M,\R)$, $P\in C^0(M,\Poly{k+1}{d})$ and a Riemannian metric $\eta$ on $M$ such that
\[
\Psi(u)(x) \;\leq\; \e^{-\alpha u(x)}P(x)\left(u(x),\, \bigabs{\nabla_\eta^1u}_\eta(x),\, \dots,\, \bigabs{\nabla_\eta^ku}_\eta(x)\right)
\]
holds for all $x\in M$ and all $u\in C^\infty(M,\R)$ with $u(x)>u_0(x)$. Without loss of generality, we may assume that $u_0$ is $\geq$ than each of the analogous functions which appear in the flatzoomer conditions of $\Phi_0,\Phi_1,\Phi_2$.

\smallskip
We claim that
\[
1/\inj_{g[u]}^\Fol(x) \;\leq\;
1/\conv_{g[u]}^\Fol(x) \;\leq\; \sup\Set{\e^{-\alpha u(y)} P(y)\left(u(y),\, \bigabs{\nabla_\eta^1u}_\eta(y),\, \dots,\, \bigabs{\nabla_\eta^ku}_\eta(y)\right)}{y\in K_{i+1}\without K_{i-2}}
\]
holds for all $i\in\N$ and $x\in K_i\without K_{i-1}$ and $u\in C^\infty(M,\R)$ which satisfy $u>u_0$ on $K_{i+1}\without K_{i-2}$. This claim implies by Definition \ref{quasidef} that the theorem is true.

\smallskip
In order to prove the claim, only the second \scare{$\leq$} has to be checked. By Corollary \ref{convestimate}, it suffices to verify that for all $i\in\N$ and leaves $L$ and $x\in (K_i\without K_{i-1})\cap L$ and $u\in C^\infty(M,\R)$ which satisfy $u>u_0$ on $K_{i+1}\without K_{i-2}$, there exists an $r\in\R_{>0}$ such that $B^{g[u]_L}_r(x)$ is compact (in the leaf topology on $L$) and the following inequalities hold (where $\sup\leer\define 0$):
\begin{align}
\frac{2}{\pi}\abs{\max\BigSet{\sec_{g[u]_L}(\sigma)}{z\in B^{g[u]_L}_r(x),\; \sigma\in\Gr_2(T_zL)}}^{1/2}
&\leq \sup\BigSet{\Phi_0(u)(y)}{y\in K_{i+1}\without K_{i-2}} \,, \label{check1} \\
\sup\BigSet{4/\length(\gamma)}{\text{$\gamma\subset B^{g[u]_L}_r(x)$ is a self-inters.\ $g[u]_L$-geodesic}} &\leq \sup\BigSet{\Phi_1(u)(y)}{y\in K_{i+1}\without K_{i-2}} \,, \label{check2} \\
\frac{4}{r} &\leq \sup\BigSet{\Phi_2(u)(y)}{y\in K_{i+1}\without K_{i-2}} \,. \label{check3}
\end{align}

We will show that $r\define 1/\sup\Set{\e^{u_1(y)-u(y)}}{y\in K_{i+1}\without K_{i-2}}$ has these properties. It satisfies \eqref{check3} tautologically. Moreover, with $q\define \inf\Set{\e^{u(y)-u_1(y)}}{y\in K_{i+1}\without K_{i-2}}$ we obtain
\[
B\define B^{g[u]_L}_r(x) = B^{\exp(2u-2u_1)g[u_1]_L}_r(x) \subseteq B^{q^2g[u_1]_L}_r(x)
= B^{g[u_1]_L}_{r/q}(x) = B^{g[u_1]_L}_1(x) \subseteq U_j\cap\left(K_{i+1}\without K_{i-2}\right)
\]
for some $j\in\N$. The ball $B$ is a connected closed subset of $L$ with respect to the leaf topology, and $B$ is contained in $U_j$, whose closure in $M$ is a compact subset of a foliation chart domain. All this together implies that $B$ is compact in the leaf topology on $L$ (and also in the topology on $M$).

\smallskip
Inequality \eqref{check1} is true: For each $z\in B$ and each $\sigma\in\Gr_2(T_zL)$, we choose a $g[u]_L$-orthonormal basis $(e_1,e_2)$ of $\sigma$. This yields $\abs{\sec_{g[u]_L}(\sigma)} = \abs{\Riem_{g[u]_L}(e_1,e_2,e_1,e_2)} \leq \abs{\Riem_{g[u]_L}}_{g[u]_L}$. Since $z$ lies in $K_{i+1}\without K_{i-2}$, the definition of $\Phi_0(u)$ implies \eqref{check1}.

\vspace*{-1.0ex}
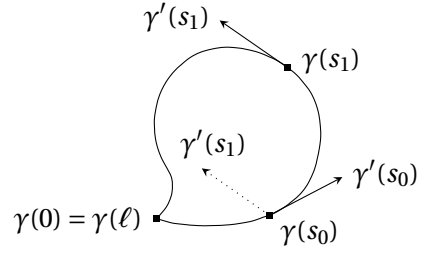
\begin{figure}[h]
\begin{minipage}{0.59\textwidth} \setlength\parindent{1em}
It remains to check \eqref{check2}. Let $\gamma\colon\cci{0}{\ell}\to B$ be an arclength-pa\-ram\-e\-trized $g[u]_L$-geodesic with $\gamma(0) = \gamma(\ell)$. There exists an $s_0\in\cci{0}{\ell}$ with $u(\gamma(s_0)) = \min_{s\in\cci{0}{\ell}}u(\gamma(s))$.

\medskip
Since $B\subseteq U_j\subseteq\dom(\varphi_j)$, the euclidean metric $\eucl_j$ is defined on $B$ and we can regard $B$ as a subset of the vector space $\R^n$. There is an $s_1\in\cci{0}{\ell}$ with $\eval{\gamma'(s_1)}{\gamma'(s_0)}_{\eucl_j} \leq 0$, because the map $w\colon\cci{0}{\ell}\ni t\mapsto \eval{\gamma'(t)}{\gamma'(s_0)}_{\eucl_j}$ satisfies $\int_0^\ell w(t)\diff t = \eval{\gamma(\ell)-\gamma(0)}{\gamma'(s_0)}_{\eucl_j} = 0$.

\medskip
In particular, we have $\abs{\gamma'(s_0)}_{\eucl_j} \leq \abs{\gamma'(s_1)-\gamma'(s_0)}_{\eucl_j}$.
\end{minipage}
\begin{minipage}{0.39\textwidth}
\centering
\begin{tikzpicture}[>=stealth,scale=0.16]
\draw plot[smooth,tension=0.7] coordinates { (9.7362,2.1197) (10.861,1.7682) (11.8803,1.6276) (13.1105,1.5221) (14.2704,1.5221) (15.7467,1.5924) (17.1527,1.8033) (17.8557,1.9791) (18.5,2.22) (19,2.38) (19.5,2.6) (20,2.9) (20.5,3.24) (20.75,3.45) (21,3.65) (21.5,4.1) (22,4.73) (22.3755,5.407) (22.727,6.2506) (22.9379,7.0942) (23.1136,7.973) (23.2191,8.9571) (23.1615,10.0116) (23.0226,10.9136) (22.7514,11.7924) (22.3647,12.6008) (21.8726,13.2686) (21.2399,14.0067) (20.4667,14.6394) (19.6934,15.1666) (18.9552,15.5181) (18.1116,15.8696) (17.1626,16.1157) (16.2839,16.2563) (15.2997,16.2988) (14.2804,16.1508) (13.2962,15.8696) (12.4877,15.4478) (11.7848,14.8854) (11.1872,14.3231) (10.6486,13.6406) (10.2356,12.9376) (9.9363,12.2346) (9.7254,11.4613) (9.5554,10.442) (9.5572,9.5984) (9.638,8.7196) (9.8542,7.8409) (10.1931,6.986)  (10.5798,6.283) (10.9724,5.4156) (11.0015,4.3498) (10.7555,3.5062) (10.3689,2.8384) (9.7362,2.1197) };
\node[fill=black,inner sep=0.25ex] at (9.7362,2.1197) {};
\node[outer sep=0ex,inner sep=0.3ex,label=0:${\gamma(0)=\gamma(\ell)}$] at (-3.2,2.0) {};
\node[fill=black,inner sep=0.25ex] at (19,2.38) {};
\node[outer sep=0ex,inner sep=0.3ex,label=275:$\gamma(s_0)$] at (19.0,3.1) {};
\node[fill=black,inner sep=0.25ex] at (20.4667,14.6394) {};
\node[outer sep=0ex,inner sep=0.3ex,label=0:$\gamma(s_1)$] at (20.5,15.2) {};
\draw[->] (19,2.38) -- (25,5.54);
\node[outer sep=0ex,inner sep=0.3ex,label=0:$\gamma'(s_0)$] at (24.7,5.8) {};
\draw[->] (20.4667,14.6394) -- (14.9083,18.5239);
\node[outer sep=0ex,inner sep=0.3ex,label=0:$\gamma'(s_1)$] at (7.5,18.8) {};
\draw[->,dotted] (19,2.38) -- (13.4416,6.2645);
\node[inner sep=0.2ex,label=85:$\gamma'(s_1)$] at (10.8,6.0) {};
\end{tikzpicture}
\caption{A self-inter\-secting $g[u]_L$-geodesic $\gamma$ in $B\subseteq\R^n$.}
\end{minipage}
\end{figure}

\vspace*{-1.0ex}
Denoting the components (with respect to the chosen coordinates) of a vector $v\in T_xL$ with $x\in B$ by $v_1,\dots,v_n$, we have the following estimates:
\begin{align*}
C_j\abs{v}_{\eucl_j} &\geq \abs{v}_g \geq C_j^{-1}\abs{v}_{\eucl_j} \,,
&n^{1/2}\abs{v}_{\eucl_j} &\geq \sum_{a=1}^n\abs{v_a} \,.
\end{align*}
In particular,
\[
\forall s\in\cci{0}{\ell}\colon\; n^{1/2}C_j\e^{-u(\gamma(s))}
= n^{1/2}C_j\e^{-u(\gamma(s))}\abs{\gamma'(s)}_{g[u]}
= n^{1/2}C_j\abs{\gamma'(s)}_g
\geq \sum_{a=1}^n\abs{\gamma_a'(s)} \,.
\]

Using this and $\forall c\colon \abs{\partial_c}_{\eucl_j}=1$ and the $g[u]_L$-geodesic equation
\[
\forall s\in\cci{0}{\ell}\colon\; \gamma''(s) = \sum_{c=1}^n\gamma_c''(s)\;\partial_c\big(\gamma(s)\big) = -\sum_{a,b,c=1}^n{}^{g[u]_L}\Gamma_{ab}^c\big(\gamma(s)\big)\; \gamma_a'(s)\gamma_b'(s)\;\partial_c\big(\gamma(s)\big) \,,
\]
we obtain
\[ \begin{split}
1 &\;=\; \abs{\gamma'(s_0)}_{g[u]}
\;=\; \e^{u(\gamma(s_0))}\abs{\gamma'(s_0)}_g\\
&\;\leq\; C_j\e^{u(\gamma(s_0))}\abs{\gamma'(s_0)}_{\eucl_j}\\
&\;\leq\; C_j\e^{u(\gamma(s_0))}\abs{\gamma'(s_1)-\gamma'(s_0)}_{\eucl_j}
\;=\; C_j\e^{u(\gamma(s_0))}\abs{\int_{s_0}^{s_1}\gamma''(s)\diff s}_{\eucl_j}\\
&\;\leq\; C_j\e^{u(\gamma(s_0))}\abs{ \sum_{a,b,c=1}^n\; \int_{s_0}^{s_1} \Bigabs{{}^{g[u]_L}\Gamma_{ab}^c\big(\gamma(s)\big)} \cdot \bigabs{\gamma_a'(s)} \cdot \bigabs{\gamma_b'(s)} \;\diff s }\\
&\;\leq\; C_j\e^{u(\gamma(s_0))} \abs{ \sum_{a,b,c=1}^n\; \int_{s_0}^{s_1} A_j\cdot\Big(1+\bigabs{\!\diff u}_g(\gamma(s))\Big) \cdot \bigabs{\gamma_a'(s)} \cdot \bigabs{\gamma_b'(s)} \;\diff s }\\
&\;=\; nA_jC_j\e^{u(\gamma(s_0))}\; \abs{ \int_{s_0}^{s_1} \Big(1+\bigabs{\!\diff u}_g(\gamma(s))\Big)\cdot \left(\sum_{a=1}^n\abs{\gamma_a'(s)}\right)^2\diff s }\\
&\;\leq\; n^2A_jC_j^3\; \abs{ \int_{s_0}^{s_1} \Big(1+\bigabs{\!\diff u}_g(\gamma(s))\Big) \cdot \e^{u(\gamma(s_0))}\;\e^{-2u(\gamma(s))}\diff s }\\
&\;\leq\; n^2A_jC_j^3\; \abs{ \int_{s_0}^{s_1} \Big(1+\bigabs{\!\diff u}_g(\gamma(s))\Big)\cdot\e^{-u(\gamma(s))}\diff s }\\
&\;\leq\; \ell n^2A_jC_j^3\;\norm{\e^{-u}\Big(1+\bigabs{\!\diff u}_g\Big)}_{C^0(U_j\cap(K_{i+1}\without K_{i-2}))} \,,
\end{split} \]
and thus
\[ \begin{split}
4/\ell
&\leq H_j\norm{\e^{-u}\Big(1+\bigabs{\!\diff u}_g\Big)}_{C^0(U_j\cap(K_{i+1}\without K_{i-2}))}\\
&\leq \norm{H\e^{-u}\Big(1+\bigabs{\!\diff u}_g\Big)}_{C^0(K_{i+1}\without K_{i-2})}
= \sup\BigSet{\Phi_1(u)(y)}{y\in K_{i+1}\without K_{i-2}} \,.
\end{split} \]
Hence also \eqref{check2} is true. This completes the proof.
\end{proof}

It remains to explain why the statements \ref{gen2} and \ref{gen4} of Theorem \ref{maingeneral} cannot be generalized to arbitrary semi-Riemannian metrics. One problem is that not every conformal class of, say, Lorentzian metrics contains a complete metric. (Recall that since there is no Lorentzian analogue of the Hopf--Rinow theorem, the notion of \emph{completeness} of Lorentzian metrics refers always to geodesic completeness.)

\begin{example} \label{incomplete}
Let $m\in\N$, let $M$ be a manifold which contains an open subset $U$ diffeomorphic to $\R\times\Sph^1\times\R^m$; we identify $U$ and $\R\times\Sph^1\times\R^m$ by the diffeomorphism. Let $g_0$ be a Lorentzian metric on $M$ which has in a neighborhood of the circle $L\define\set{0}\times\Sph^1\times\set{0_m}\subset M$ the form
\[
(g_0)_{(x,y,z)}= \smatrix{ 0&1\\ 1&x\\ &&1\\ &&&\ddots\\ &&&&1 } \,,
\]
where $x$ and $z$ are the standard coordinates on $\R$ resp.\ $\R^m$ and where $y\in\Sph^1$. Then the conformal class of $g_0$ contains no metric all of whose lightlike geodesics are complete: For every $g$ in the conformal class of $g_0$, the maximal domain $I\subseteq\R$ of the (lightlike) $g$-geodesic $\gamma\in C^\infty(I,M)$ with $\gamma(0)=(0,0,0)$ and $\gamma'(0)=(0,1,0)$ is bounded from above. (Here we consider $0\in\R/\Z=\Sph^1$.) The image of $\gamma$ is $L$.
\end{example}
\begin{proof}
Let $u\in C^\infty(M,\R_{>0})$, let $g=ug_0$. We compute $\gamma$ in the universal covering of $\R\times\Sph^1\times\R^m$, where we can use the standard global coordinates $(x_1,\dots,x_{m+2})$ (with $x=x_1$, $y=x_2$). The components $\gamma_1,\dots,\gamma_{m+2}$ solve the geodesic equation
\[
\forall k\in\set{1,\dots,m+2}\colon \forall t\in I\colon\;\;
\gamma_k''(t) = -\sum_{i,j=1}^{m+2}\Gamma_{ij}^k\left(\gamma(t)\right)\gamma_i'(t)\gamma_j'(t) \,,
\]
where
\[
\Gamma_{ij}^k = \frac{1}{2}\sum_{l=1}^{m+2}g^{kl}\left(\partial_ig_{jl} +\partial_jg_{il} -\partial_lg_{ij}\right) \in C^\infty(\R^{m+2},\R)
\]
are the Christoffel symbols of $g$.

For $k\neq2$, all $\Gamma_{22}^k$ vanish on $\tilde{L}\define\set{0}\times\R\times\set{0_m}\subset\R^{m+2}$: On $\tilde{L}$, we have $\left(g^{kl}\right) = \frac{1}{u}\cdot\left(\smatrix{0&1\\1&0}\oplus\diag(1,\dots,1)\right)$ and thus, for $\kappa\geq3$:
\begin{align*}
\Gamma_{22}^1 &= \frac{1}{2}\sum_lg^{1l}\left(2\partial_2g_{2l} -\partial_lg_{22}\right)
= \frac{1}{2u}\left(2\partial_2g_{22} -\partial_2g_{22}\right)
= \frac{1}{2u}\partial_2(xu) = 0 \,,\\
\Gamma_{22}^\kappa &= \frac{1}{2}\sum_lg^{\kappa l}\left(2\partial_2g_{2l} -\partial_lg_{22}\right)
= \frac{1}{u}\partial_2g_{2\kappa} -\frac{1}{2u}\partial_\kappa g_{22}
= -\frac{1}{2u}\partial_\kappa(xu) = 0 \,.
\end{align*}
Hence for all $y,r\in\R$, the $g$-geodesic equation has a local solution $\gamma_{y,r}$ with $\gamma_{y,r}(0)=(0,y,0_m)$ and $\gamma_{y,r}'(0)=(0,r,0_m)$ such that all components of $\gamma_{y,r}$ except the $2$-component vanish identically. This implies that the image of the maximal geodesic $\gamma$ with $\gamma(0)=(0,0,0_m)$ and $\gamma'(0)=(0,1,0_m)$ is $\tilde{L}$.

To determine $\gamma$, we thus have to calculate only $\gamma_2$. On $\tilde{L}$, we compute
\[
\Gamma_{22}^2 = \frac{1}{2}\sum_lg^{2l}\left(2\partial_2g_{2l} -\partial_lg_{22}\right)
= \frac{1}{2u}\left(2\partial_2g_{21} -\partial_1g_{22}\right)
= \frac{2\partial_2u -\partial_1(xu)}{2u}
= \frac{2\partial_2u -u}{2u}
= \frac{\partial_2u}{u} -\frac{1}{2} \,.
\]
With $w\in C^\infty(\R,\R)$ given by $w(y)\define\ln\left(u(0,y,0_m)\right)$, $\gamma_2\in C^\infty(I,\R)$ is the maximal solution of
\begin{align*}
\gamma_2''(t) &= \left(\frac{1}{2}-w'\left(\gamma_2(t)\right)\right)\gamma_2'(t)^2 \,,
&\gamma_2(0)&=0 \,, \quad\gamma_2'(0)=1 \,.
\end{align*}
Since $w$ is the pullback of a function on $\Sph^1$ via the universal covering $\R\ni y\mapsto[y]\in\R/\Z$, it is $1$-periodic. In particular, $w-w(0)$ is bounded from above by some $C\in\R$. We obtain
\[
\forall t\in I\colon\;\; \left(\ln\compose\gamma_2'\right)'(t)
= \frac{\gamma_2''(t)}{\gamma_2'(t)}
= \frac{1}{2}\gamma_2'(t) -w'\left(\gamma_2(t)\right)\gamma_2'(t)
= \left(\frac{1}{2}\gamma_2 -w\compose\gamma_2\right)'(t)
\]
and thus
\[
\forall t\in I\colon\;\; \ln\left(\gamma_2'(t)\right) = \int_0^t\left(\ln\compose\gamma_2'\right)'(s)\diff s
= \frac{1}{2}\gamma_2(t) -w\left(\gamma_2(t)\right) +w(0)
\geq \frac{1}{2}\gamma_2(t) -C \,.
\]
This implies $\forall t\in I\colon \gamma_2'(t) \geq \e^{-C}\e^{\gamma_2(t)/2}$, hence
\[
\forall t\in I\cap\R_{\geq0}\colon\;\; 1
> 1-\e^{-\gamma_2(t)/2}
= \frac{1}{2}\int_0^{\gamma_2(t)}\frac{1}{\e^{\xi/2}}\diff\xi
= \frac{1}{2}\int_0^t\frac{\gamma_2'(s)}{\e^{\gamma_2(s)/2}}\diff s
\geq \frac{t}{2\e^C} \,,
\]
i.e., $t<2\e^C$. This proves that the domain $I$ of $\gamma$ is bounded from above.
\end{proof}

\begin{remark}
Note that the manifold $M$ in Example \ref{incomplete} can even be compact, e.g.\ if $M$ is the $n$-torus $\T^n$ for some $n\geq2$. It is well-known that some compact manifolds admit incomplete Lorentzian metrics (cf.\ e.g.\ \cite{BEE}), and \ref{incomplete} is essentially the standard example for this phenomenon; but as far as we can tell, the literature does not mention that it even yields a conformal class without complete metric. Besides, it is apparently an open question whether every manifold which admits a Lorentzian metric admits a complete one. (We are grateful to Stefan Suhr for remarks on these points.) We will not discuss here to which extent the completeness problem can be avoided by imposing causality conditions on the conformal class in question.
\end{remark}

\begin{remark}
There are no natural useful notions of \emph{convexity radius} or \emph{injectivity radius} of a Lorentzian manifold $(M,g)$, but one can define such radii via an auxiliary Riemannian metric $\eta$ on $M$: the \scare{size} of subsets of the domain of $\exp^g_x$ in each tangent space $T_xM$ can be measured in terms of $\eta$. The resulting \scare{mixed} injectivity radius has been studied by Chen--LeFloch \cite{CLF} and Grant--LeFloch \cite{GLF} in the situation when $\eta$ has the form $\Wick(g,t)$ for some temporal function $t$, as in our Theorem \ref{mainlorentz}. Example \ref{incomplete} suggests that statement \ref{gen2} of Theorem \ref{maingeneral} becomes true for an arbitrary \emph{semi}-Riemannian metric $g_0$ and an arbitrary additional Riemannian metric $\eta$ if one drops the completeness claim and replaces the (undefined) radius $\conv_{g_0[u]}$ (resp.\ $\inj_{g_0[u]}$) by the \scare{mixed} radius $\conv_{g_0[u]}^{\eta[u]}$ (resp.\ $\inj_{g_0[u]}^{\eta[u]}$). Analogously one might perhaps get a correct semi-Riemannian generalization of statement \ref{gen4} in Theorem \ref{maingeneral}. We will not investigate these matters here.
\end{remark}

%%%%%%%%%%%%%%%%%%%%%%%%%%%%%%%%%%%%%%%%%%%%%%%%%%%%%%%%%%%%%%%%%%%%%%%%%%%%%%
\section{Proof of the main results} \label{mainproof}
%%%%%%%%%%%%%%%%%%%%%%%%%%%%%%%%%%%%%%%%%%%%%%%%%%%%%%%%%%%%%%%%%%%%%%%%%%%%%%

We will obtain Theorem \ref{maingeneral} as a corollary to the following result about sequences of quasi-flatzoomers:
\begin{theorem} \label{FlatzoomThemAll}
Let $\Kex=(K_i)_{i\in\N}$ be a smooth compact exhaustion of a manifold $M$, let $(\Phi_i)_{i\in\N}$ be a sequence of quasi-flatzoomers for $\Kex$, let $(\eps_i)_{i\in\N}$ be a sequence in $C^0(M,\R_{>0})$, let $w\in C^0(M,\R)$. Then there exists a real-analytic $u\colon M\to\R$ with $u>w$ such that
\begin{equation} \label{pudelskern}
\forall i\in\N\colon\; \Phi_i(u) < \eps_i  \;\;\text{holds on $M\without K_i$} \,.
\end{equation}
\end{theorem}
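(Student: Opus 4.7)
The overall plan is to first construct a smooth $u_{\mathrm{sm}}$ by an explicit ``staircase'' formula and then upgrade to a real-analytic $u$ by Whitney $C^\infty$ approximation. Fix an auxiliary Riemannian metric $\eta$ on $M$. By the \emph{and hence every} clause of Definition \ref{quasidef}, each quasi-flatzoomer $\Phi_i$ comes with data $k_i,d_i\in\N$, $\alpha_i\in\R_{>0}$, $u_{0,i}\in C^0(M,\R)$, and $P_i\in C^0(M,\Poly{k_i+1}{d_i})$ relative to this $\eta$. Setting $K_{-2}\define K_{-1}\define\leer$, the requirement $\Phi_i(u)<\eps_i$ on $M\without K_i$ only has to be checked on the annuli $A_l\define K_l\without K_{l-1}$ for $l\geq i+1$, and on each fixed $A_l$ only the finitely many $\Phi_0,\dots,\Phi_{l-1}$ are relevant. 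Exactly as in Example \ref{quasiex2}, these finitely many bounds aggregate into a single quasi-flatzoomer estimate on $A_l$ with uniform data $(\bar k_l,\bar\alpha_l,\bar u_{0,l},\bar P_l)$ whose supremum is taken over the fattened annulus $B_l\define K_{l+1}\without K_{l-2}$.

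Now I pick smooth cutoffs $\chi_l\colon M\to\smallcci{0}{1}$ with $\chi_l\equiv 0$ on $K_{l-1}$ and $\chi_l\equiv 1$ on $M\without\inte(K_l)$; these exist because $\Kex$ is smooth. I also choose $w_0\in C^\infty(M,\R)$ with $w_0>w+1$ everywhere and $w_0>\bar u_{0,l}$ on $B_l$ for every $l$ (a partition-of-unity patching). Then
\[
u_{\mathrm{sm}} \;\define\; w_0 + \sum_{l=0}^\infty c_l\,\chi_l,
\]
with positive constants $c_l$ to be fixed inductively. On $B_l$ only the indices $j\in\set{l-2,l-1,l,l+1}$ contribute nonconstant $\chi_j$-terms (those with $j\leq l-3$ are identically $1$ on $B_l$, those with $j\geq l+2$ are identically $0$ there), so on $B_l$
\[
u_{\mathrm{sm}} \;\geq\; w_0 + \sum_{j=0}^{l-3}c_j, \qquad \bigabs{\nabla_\eta^\kappa u_{\mathrm{sm}}}_\eta \;\leq\; \bigabs{\nabla_\eta^\kappa w_0}_\eta + \sum_{j=l-2}^{l+1}c_j\sup_{B_l}\bigabs{\nabla_\eta^\kappa\chi_j}_\eta.
\]
Plugging these into the aggregated bound yields, for $x\in A_l$ and $i\leq l-1$, an estimate of the shape
\[
\Phi_i(u_{\mathrm{sm}})(x) \;\leq\; \exp\!\Bigl(-\bar\alpha_l\Bigl(\inf_{B_l}w_0 + c_0+\dots+c_{l-3}\Bigr)\Bigr)\;\cdot\;Q_l\bigl(c_{l-2},c_{l-1},c_l,c_{l+1}\bigr),
\]
with $Q_l$ a polynomial of bounded degree and positive coefficients determined by $\bar P_l$ and the $C^{\bar k_l}$-norms of the $\chi_j$ on the compact set $B_l$.

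The constants $c_l$ are then chosen inductively so that the right-hand side stays below $\tfrac12\inf_{A_l}\eps_i$ for every $i\leq l-1$. The key observation is that $c_{l-3}$ enters only through the exponential factor while the ``future'' constants $c_{l-2},\dots,c_{l+1}$ enter only polynomially; so after committing in advance to an a priori growth rate $c_{l+1}\leq F(c_l)$ for a fixed polynomial $F$, each $c_{l-3}$ can be chosen at its induction stage large enough to force the inequality no matter which compatible values the later constants take. This is exactly the ``climbing'' construction formalised in Lemma \ref{alpinist}, and it is the technical heart of the argument. The resulting smooth $u_{\mathrm{sm}}$ satisfies $u_{\mathrm{sm}}>w$ everywhere and every inequality in \eqref{pudelskern} with a strictly positive slack on each compact subset of the corresponding $M\without K_i$.

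For the real-analytic upgrade, note that the aggregated bound on $A_l$ depends on $u$ only through finitely many of its $\eta$-derivatives at points of the compact set $B_l$, so each condition $\Phi_i(u)<\eps_i$ on $M\without K_i$ is open in the Whitney strong $C^\infty$-topology on $C^\infty(M,\R)$. The Grauert--Morrey approximation theorem (cf.\ \cite{Shiga}) provides a real-analytic $u$ arbitrarily close to $u_{\mathrm{sm}}$ in that topology, and choosing the approximation fine enough on each $B_l$ to fit inside the slack built up above delivers the required $u$. The principal obstacle throughout is the interleaved induction: on each annulus the exponential control is determined by constants chosen earlier while the polynomial factor depends on later constants, which is what forces the a priori growth commitment and the delicate balancing encapsulated in Lemma \ref{alpinist}.
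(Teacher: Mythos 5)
Your overall architecture --- a staircase of plateaus over the exhaustion, aggregation of the finitely many relevant quasi-flatzoomers on each annulus, strict slack, and a Whitney-fine real-analytic approximation at the end --- is the same as the paper's. But there is a genuine gap at the technical heart, and it sits exactly where you defer to Lemma \ref{alpinist}. Your Ansatz $u_{\mathrm{sm}}=w_0+\sum_lc_l\chi_l$ climbs across each annulus by scaling a \emph{fixed} profile $\chi_l$ by a constant $c_l$. For any such profile the quantity $\sup_t\e^{-\alpha c\chi_l(t)}\bigl(1+c\abs{\chi_l'(t)}\bigr)$ is \emph{unbounded} as $c\to\infty$: by Gronwall no smooth $\chi_l$ starting at $0$ can satisfy $\abs{\chi_l'}\leq C\chi_l$ near the bottom of the climb, so there are points where $\chi_l$ is of order $1/c$ (hence $\e^{-\alpha c\chi_l}$ gives essentially no decay) while $c\abs{\chi_l'}$ is large. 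This is precisely the failure mode exhibited in Remark \ref{alpineremark}. Lemma \ref{alpinist} does not formalise your construction; it replaces it, by using climb profiles $\varphi_l[c_l]$ that \emph{depend on the climb height} (the logarithmic reparametrisation $-\tfrac1c\ln(1-q_n\xi)$), so that at every point of the climb the derivatives are dominated by a constant times the exponential of the height already climbed \emph{at that point}. That pointwise self-compensation is what allows the paper to choose the plateau heights $b_i$ by a recursion containing only lower bounds, with no upper bound linking $b_{i+1}$ to $b_i$.

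Because your profile is not self-compensating, the derivative term on the $l$th climb is of order $c_l$ and can only be paid for by the exponential $\e^{-\bar\alpha_l(c_0+\cdots+c_{l-2})}$ accumulated on earlier annuli; this imposes upper bounds on each $c_l$ in terms of the earlier partial sums. At the same time the inequality $\Phi_i(u)<\eps_i$ itself forces lower bounds on those partial sums of size roughly $\tfrac1{\bar\alpha_l}\ln(\vartheta_l/\delta_l)$, and since the $\eps_i$ may decay arbitrarily fast (and the $\theta_i$ grow, the $a_i$ decay, arbitrarily fast), these thresholds can outrun every sequence compatible with the caps. Already for the single flatzoomer $u\mapsto\e^{-u}\bigl(1+\abs{\!\diff u}_\eta\bigr)$ on $M=\R$ with $\eps_0$ decaying like an $l$-fold iterated exponential on the $l$th annulus, your bound caps $c_0+\cdots+c_l$ by a tower of height about $l/3$ while the inequality forces it above a tower of height $l$; no choice of $w_0$ and $(c_l)$ works. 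Your proposed fix --- committing to $c_{l+1}\leq F(c_l)$ for a fixed polynomial $F$ --- makes matters worse, since it is itself an upper bound of exactly the incompatible kind, and you never verify that it is consistent with the requirement that each $c_{l-3}$ be ``large enough''. To close the argument you must abandon the fixed profiles and import the height-dependent profiles of Lemma \ref{alpinist} (or prove a substitute with the same uniform boundedness of $G_{k,a}$). The concluding approximation step is essentially fine once the smooth $u$ exists with uniform strict slack on each annulus.
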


We need some preparations for the rather technical proof of \ref{FlatzoomThemAll}. In that proof, we have to construct a function $u$ which increases rapidly, because we want the exponential factor $\e^{-\alpha u}$ from the quasi-flatzoomer definition to decrease rapidly. Of course, such a rapid increase makes the derivatives of $u$ large as well, and that is potentially harmful. The details of how we increase $u$ are therefore crucial; the most obvious attempt to do this would not work, as we indicate in Remark \ref{alpineremark} below. Lemma \ref{alpinist} is the analytic key to our argument.

\begin{definition} \label{climbers}
As usual, $\phi^{(i)}$ denotes the $i$th derivative of a function $\phi\in C^\infty(I,\R)$ on some interval $I\subseteq\R$. For $r\in\R_{\geq0}$, we define
\[
\Climbers(r) \define \Set{\phi\in C^\infty\big(\cci{0}{1},\cci{0}{r}\big)}{\phi(0)=0,\;\; \phi(1)=r,\;\; \forall i\in\N_{\geq1}\colon \phi^{(i)}(0)=0=\phi^{(i)}(1)} \,.
\]
A sequence $(\phi_n)_{n\in\N}$ in $C^\infty(\cci{0}{1},\R)$ is an \textbf{alpinist} iff $\forall n\in\N\colon \phi_n\in \Climbers(n)$.

\smallskip
Let $k\in\N$, let $a\in\R_{>0}$, let $\Theta=(\phi_n)_{n\in\N}$ be a sequence in $C^\infty(\cci{0}{1},\R)$. We define the set
\[
G_{k,a}[\Theta] \define
\Set{ \max_{t\in\cci{0}{1}}\e^{-a\phi_n(t)}\left(1+\sum_{j=0}^k\bigabs{\phi_n^{(j)}(t)}\right) }{n\in\N}
\;\subset\; \R_{>0} \,.
\]
\end{definition}

\begin{lemma} \label{alpinist}
Let $a\in\R_{>0}$, let $k\in\N$. There is an alpinist $\Theta$ such that the set $G_{k,a}[\Theta]$ is bounded.
\end{lemma}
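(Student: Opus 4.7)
The plan is to build each $\phi_n$ by concatenating suitably rescaled copies of a single template. Fix once and for all a smooth function $\tilde{\psi}\colon\R\to\cci{0}{1}$ with $\tilde{\psi}\equiv0$ on $\smallooi{-\infty}{0}$, $\tilde{\psi}\equiv1$ on $\smallooi{1}{\infty}$, and $\bigrestrict{\tilde{\psi}}{\cci{0}{1}}\in\Climbers(1)$ (so all derivatives of $\tilde{\psi}$ vanish outside $\smallooi{0}{1}$). Let $M_j\define\bignorm{\tilde{\psi}^{(j)}}_{C^0(\R)}<\infty$. Set $\sigma\define\e^{-a/\max\set{k,1}}\in\smallooi{0}{1}$ and choose breakpoints
\[
s_0\define 0, \qquad s_i\define 1-\sigma^i = (1-\sigma)\sum_{\ell=0}^{i-1}\sigma^\ell \quad(i\in\N_{\geq 1}),
\]
so that $s_i\nearrow 1$ and the $i$th interval has length $\ell_i\define s_i-s_{i-1}=(1-\sigma)\sigma^{i-1}$. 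Define $\phi_n\colon\cci{0}{1}\to\cci{0}{n}$ by
\[
\phi_n(t) \define \begin{cases} (i-1)+\tilde{\psi}\big((t-s_{i-1})/\ell_i\big) & \text{if $t\in\cci{s_{i-1}}{s_i}$, $i\in\set{1,\dots,n}$,} \\ n & \text{if $t\in\cci{s_n}{1}$.} \end{cases}
\]

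Next I would verify that $\phi_n\in\Climbers(n)$. The flatness of $\tilde{\psi}$ at $0$ and $1$ makes all one-sided derivatives of $\phi_n$ vanish at each breakpoint $s_i$ and at the two endpoints $0$ and $1$, so $\phi_n$ is $C^\infty$ on $\cci{0}{1}$ with the required endpoint conditions, and takes values in $\cci{0}{n}$ with $\phi_n(0)=0$, $\phi_n(1)=n$.

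The core estimate is a direct computation on each piece. On $\cci{s_{i-1}}{s_i}$, $\phi_n(t)\geq i-1$ and $\bigabs{\phi_n^{(j)}(t)}\leq M_j\ell_i^{-j}=M_j(1-\sigma)^{-j}\sigma^{-(i-1)j}$ for $j\geq1$, hence
\[
\e^{-a\phi_n(t)}\bigabs{\phi_n^{(j)}(t)} \;\leq\; \e^{-a(i-1)}M_j(1-\sigma)^{-j}\sigma^{-(i-1)j} \;=\; M_j(1-\sigma)^{-j}\,\sigma^{(i-1)(k-j)} \;\leq\; M_j(1-\sigma)^{-j}
\]
because $\sigma^k=\e^{-a}$ and $j\leq k$. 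The $j=0$ contribution $\e^{-a\phi_n(t)}\bigabs{\phi_n(t)}$ is bounded by $\sup_{y\geq 0}y\e^{-ay}=1/(a\e)$, and the stand-alone $1$ contributes at most $1$. On the terminal plateau $\cci{s_n}{1}$ all derivatives of order $\geq1$ vanish and $\e^{-an}(1+n)\leq 1+1/(a\e)$. Summing the $k+1$ uniform bounds yields a single constant that dominates $\max_{t\in\cci{0}{1}}\e^{-a\phi_n(t)}\big(1+\sum_{j=0}^k\bigabs{\phi_n^{(j)}(t)}\big)$ independently of $n$, proving boundedness of $G_{k,a}[\Theta]$.

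The only nontrivial design choice — and the place where the argument could easily fail — is the tuning $\sigma=\e^{-a/k}$: the widths $\ell_i$ must shrink geometrically at exactly this rate so that the exponential factor $\e^{-a\phi_n}\leq\e^{-a(i-1)}$ precisely absorbs the $\ell_i^{-k}$ blow-up of the top derivative on the $i$th piece; any slower decay of $\ell_i$ would cause the lower-order derivatives to escape control, while any faster decay would blow up $|\phi_n^{(k)}|$. Once this balance is set, the gluing argument and the estimate are straightforward.
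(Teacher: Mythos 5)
Your construction is correct, and it takes a genuinely different route from the paper's. The paper fixes a single climber $\xi\in\Climbers(1)$ and defines $\phi_n=-\tfrac{1}{c}\ln(1-q_n\xi)$ with $q_n=1-\e^{-nc}$ and $c=a/k$; an induction shows $\phi_n^{(i)}=P_i(q_n\xi,\dots,q_n\xi^{(i)})/(1-q_n\xi)^i$ with uniformly bounded numerators, and the factor $\e^{-a\phi_n}=(1-q_n\xi)^{k}$ exactly absorbs the denominators since $i\leq k$. You instead concatenate $n$ rescaled copies of a fixed smooth step, compressing the $i$th transition into an interval of width $\ell_i=(1-\sigma)\sigma^{i-1}$ with $\sigma=\e^{-a/\max\set{k,1}}$, so that $\e^{-a\phi_n}\leq\e^{-a(i-1)}=\sigma^{k(i-1)}$ cancels the $\ell_i^{-j}=(1-\sigma)^{-j}\sigma^{-(i-1)j}$ growth of the $j$th derivative for all $j\leq k$. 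Both proofs implement the same heuristic --- the climbing rate at height $y$ should be of order $\e^{ay/k}$ --- but yours does so by a piecewise geometric schedule and the chain rule for affine rescaling, avoiding the paper's induction on derivative formulas entirely; the price is a piecewise definition whose smoothness at the finitely many breakpoints must be (and is) checked via the vanishing of all one-sided derivatives there. Your estimates are all verified correctly: the $j=0$ term is handled by $\sup_{y\geq0}y\e^{-ay}=1/(a\e)$, the terminal plateau contributes $\e^{-an}(1+n)\leq1+1/(a\e)$, and the $k=0$ case is covered by the $\max\set{k,1}$ convention since the sum over $j\geq1$ is then empty.
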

(Here ``bounded'' means bounded from above, not away from $0$.)

\begin{proof}[Proof of Lemma \ref{alpinist}]
We let $c\define a/k$ if $k\geq1$, and $c\define 19.26$ if $k=0$. For $n\in\N$, we consider
\[
q_n\define 1-\e^{-nc}\in\coi{0}{1} \,.
\]
We choose some $\xi\in\Climbers(1)$ and define a sequence $\Theta = (\phi_n)_{n\in\N}$ in $C^\infty([0,1],\R)$ by
\begin{equation} \label{alpidef}
\phi_n \define -\frac{1}{c}\;\ln\big(1-q_n\xi\big) \,.
\end{equation}
The $\phi_n$ are well-defined because $\xi$ is $\cci{0}{1}$-valued and hence $1-\big(1-\e^{-nc}\big)\xi$ is $\cci{\e^{-nc}}{1}$-valued. Since $\xi\in\Climbers(1)$, we have $\phi_n(0)=0$ and $\phi_n(1) = -\frac{1}{c}\ln\left(1-q_n\right) = n$.

\smallskip
We claim that for every $i\in\N_{\geq1}$, there is a polynomial $P_i\in\R[X_0,\dots,X_i]$ with
\begin{align*}
\forall X_0\in\R &\colon\;\; P_i(X_0,0,\dots,0) = 0 \,,\\
\forall n\in\N &\colon\;\; \phi_n^{(i)} = \frac{P_i\left(q_n\xi^{(0)},\dots,q_n\xi^{(i)}\right)}{\left(1-q_n\xi\right)^i} \,.
\end{align*}
We prove this by induction over $i$. For $i=1$, the first derivatives $\phi_n' = \frac{1}{c}\left.q_n\xi'\middle/\left(1-q_n\xi\right)\right.$ have the claimed form. If the $i$th derivatives $\phi_n^{(i)}$ have the claimed form, then the $(i+1)$st derivatives
\[ \begin{split}
\phi_n^{(i+1)} &= \left(\frac{P_i\left(q_n\xi^{(0)},\dots,q_n\xi^{(i)}\right) }{\left(1-q_n\xi\right)^i}\right)'\\
&= \frac{\left(1-q_n\xi\right)\cdot\sum_{\nu=0}^i\frac{\partial P_i}{\partial X_\nu} \left(q_n\xi^{(0)},\dots,q_n\xi^{(i)}\right)q_n\xi^{(\nu+1)}
+iq_n\xi'\cdot P_i\left(q_n\xi^{(0)},\dots,q_n\xi^{(i)}\right) }{\left(1-q_n\xi\right)^{i+1}}
\end{split} \]
have the claimed form as well. This completes the proof of the claim.

\smallskip
For each $n\in\N$, we obtain $\forall i\in\N_{\geq1}\colon \phi_n^{(i)}(0) = 0 = \phi_n^{(i)}(1)$ and thus $\phi_n\in\Climbers(n)$. Hence $\Theta$ is an alpinist.

\smallskip
Since $\forall n\in\N\colon \smallabs{q_n}\leq1$, there exists for each $i\in\N_{\geq1}$ a constant $C_i\in\R_{>0}$ with
\[
\forall n\in\N\colon \norm{P_i\left(q_n\xi^{(0)},\dots,q_n\xi^{(i)}\right)}_{C^0(\cci{0}{1},\R)} \leq C_i \,.
\]

The supremum $S\define\sup\bigSet{(1+s)/\e^{as}}{s\in\R_{\geq0}}$ exists in $\R_{>0}$. We obtain for all $n\in\N$ and $t\in[0,1]$:
\[ \begin{split}
\frac{1+\sum_{i=0}^k\bigabs{\phi_n^{(i)}(t)}}{\e^{a\phi_n(t)}}
&\leq \frac{1+\bigabs{\phi_n(t)}}{\e^{a\phi_n(t)}}
+\sum_{i=1}^k\frac{ \abs{P_i\left(q_n\xi^{(0)}(t),\dots,q_n\xi^{(i)}(t)\right)} }{ \left(1-q_n\xi(t)\right)^i\;\e^{a\phi_n(t)} }\\
&\leq S +\sum_{i=1}^k\frac{C_i}{\left(1-q_n\xi(t)\right)^i\;\e^{a\phi_n(t)}}
= S +\sum_{i=1}^k \frac{C_i\cdot\left(1-q_n\xi(t)\right)^k}{\left(1-q_n\xi(t)\right)^i}
\leq S +\sum_{i=1}^k C_i\cdot 1^{k-i} \,.
\end{split} \]
(Note that $\sum_{i=1}^0(...) = 0$.) Hence the set $G_{k,a}[\Theta]$ is bounded by $S+\sum_{i=1}^k C_i$.
\end{proof}

\begin{remark} \label{alpineremark}
If you suspect that the proof of \ref{alpinist} --- in particular definition \eqref{alpidef} --- is unnecessarily complicated, the following example might be instructive. Consider any $\phi\in\Climbers(1)$ which is for some $\delta\in\ooi{0}{1}$ equal to $t\mapsto\e^{-1/t}$ on $\oci{0}{\delta}$. The sequence $\Theta=(\phi_n)_{n\in\N}$ given by $\phi_n = n\phi$ is an alpinist, but $G_{1,1}[\Theta]$ is not bounded.
\end{remark}
\begin{proof}[Proof of the claim made in Remark \ref{alpineremark}]
There is an $n_0\in\N_{\geq1}$ with $1/\ln(n_0)\leq\delta$. For $n\in\N$ with $n\geq n_0$, we consider $t_n\define 1/\ln(n)\in\oci{0}{\delta}$. We have $\phi(t_n) = \e^{-1/t_n} = 1/n$ and thus $\e^{-\phi_n(t_n)} = 1/\e$. Moreover, $\phi'(t_n) = \left.\e^{-1/t_n}\middle/t_n^2\right. = \left.\ln(n)^2\middle/n\right.$. Hence $\e^{-\phi_n(t_n)}\left(1+\abs{\phi_n(t_n)}+\abs{\phi_n'(t_n)}\right) = \frac{1}{\e}\left(1+1+\ln(n)^2\right)$. Since this tends to $\infty$ as $n\to\infty$, the set $G_{1,1}[\Theta]$ is not bounded.
\end{proof}

\begin{proof}[Proof of Theorem \ref{FlatzoomThemAll}]
Let $K_{-2}\define K_{-1}\define\leer$. For every $i\in\N$, the boundary $\Sigma_i$ of the smooth codimension-zero submanifold-with-boundary $K_i$ has an interior collar neighborhood $A_i \subseteq K_i\without K_{i-1}$ which can be diffeomorphically identified with $\cci{0}{1}\times\Sigma_i$ such that $\Sigma_i$ is identified with $\set{1}\times\Sigma_i$. Let $\rho_i\colon A_i\to\cci{0}{1}$ denote the projection to the first factor.

\smallskip
We fix a Riemannian metric $\eta$ on $M$. For $i,k\in\N$, the chain and product rules yield a constant $L_{i,k}\in\R_{>0}$ such that for all $x\in A_i$ and $f\in C^\infty([0,1],\R)$, we have
\begin{equation} \label{chainrule}
1+\sum_{j=0}^k\abs{\nabla_\eta^j\left(f\compose\rho_i\right)}_\eta(x)
\leq L_{i,k}\cdot\left(1+\sum_{j=0}^k\abs{f^{(j)}\big(\rho_i(x)\big)}\right) \,.
\end{equation}

For each $i\in\N$, the quasi-flatzoomer property of $\Phi_i$ gives us $k_i,d_i\in\N_{\geq1}$ and $\theta_i,w_i\in C^0(M,\R_{>0})$ and $a_i\in\R_{>0}$ such that
\begin{equation} \label{Phi}
\Phi_i(u)(x) \;\leq\; \sup\Set{\e^{-a_iu(y)}\,\theta_i(y)\cdot\left(1+\sum_{j=0}^{k_i}\abs{\nabla_\eta^ju}_\eta(y)\right)^{d_i} }{y\in K_{l+1}\without K_{l-2}}
\end{equation}
holds for all $l\in\N$ and $x\in K_l\without K_{l-1}$ and $u\in C^\infty(M,\R)$ which satisfy $u>w_i$ on $K_{l+1}\without K_{l-2}$.

\smallskip
For each $i$, we replace $\Phi_i$ by $u\mapsto \Phi_i(u)^{1/d_i}$, replace $a_i$ by $a_i/d_i$, replace $\theta_i$ by $\theta_i^{1/d_i}$, and replace $\eps_i$ by $\eps_i^{1/d_i}$. After this, we may assume without loss of generality that \eqref{Phi} holds with $d_i=1$.

\smallskip
There is a function $\hat{w}\in C^0(M,\R_{>0})$ with $\hat{w}>w$ such that for each $l\in\N$, $\hat{w}>\max\set{w,w_0,\dots,w_{l-1}}$ holds pointwise on $K_{l+1}\without K_{l-2}$.

\smallskip
For $i,l\in\N$, we define $\check{\eps}_{i,l}\in\R_{>0}$ and $\hat{\Phi}_{i,l}\colon C^\infty(M,\R)\to\R_{\geq0}$ by
\begin{align*}
\check{\eps}_{i,l} &\define \inf\,\BigSet{\,\eps_i(x)}{x\in K_l\without K_{l-1}} \,,\\
\hat{\Phi}_{i,l}(u) &\define \sup\Set{\e^{-a_iu(y)}\,\theta_i(y)\cdot\left(1+\sum_{j=0}^{k_i}\abs{\nabla_\eta^ju}_\eta(y)\right)}{y\in K_{l+1}\without K_{l-2}} \,.
\end{align*}
Inequality \eqref{Phi} implies that, for any $u\in C^\infty(M,\R)$ which satisfies $u\geq\hat{w}$ (and hence satisfies for every $l\in\N$ the inequality $\bigrestrict{u}{K_{l+1}\without K_{l-2}} > \bigrestrict{\max\set{w_0,\dots,w_{l-1}}}{K_{l+1}\without K_{l-2}}$), the statement \eqref{pudelskern} is true if
\begin{equation} \label{proto}
\forall i,l\in\N\colon \Big(l\geq i+1 \;\implies\; \hat{\Phi}_{i,l}(u) < \check{\eps}_{i,l} \Big) \,.
\end{equation}
(If \eqref{proto} holds, then for all $i,l\in\N$ with $l\geq i+1$, we have on $K_l\without K_{l-1}$: $\Phi_i(u) \leq \hat{\Phi}_{i,l}(u) < \check{\eps}_{i,l} \leq \eps_i$, because $u>w_i$ is fulfilled on $K_{l+1}\without K_{l-2}$. Thus for all $i\in\N$, $\Phi_i(u)<\eps_i$ holds on $M\without K_i$; i.e., \eqref{pudelskern} is true.)

\smallskip
For $l\in\N$, we define $\tilde{\eps}_l\in\R_{>0}$ and $\tilde{\Phi}_l\colon C^\infty(M,\R)\to\R_{\geq0}$ by
\begin{align*}
\tilde{\eps}_l &\define \min\BigSet{\check{\eps}_{i,l}}{i\in\set{0,\dots,l-1}} \,,\\
\tilde{\Phi}_l(u) &\define \max\BigSet{\hat{\Phi}_{i,l}(u)}{i\in\set{0,\dots,l-1}} \,.
\end{align*}
For any $u\in C^\infty(M,\R)$ which satisfies $u\geq\hat{w}$, the statement \eqref{pudelskern} is true if
\begin{equation} \label{aim}
\forall l\in\N\colon \tilde{\Phi}_l(u) < \tilde{\eps}_l \,.
\end{equation}
(This follows from \eqref{proto}, because \eqref{aim} implies for all $i,l\in\N$ with $l\geq i+1$: $\hat{\Phi}_{i,l}(u) \leq \tilde{\Phi}_l(u) < \tilde{\eps}_l \leq \check{\eps}_{i,l}$.)

\smallskip
For $l\in\N$, we define $\alpha_l\in\R_{>0}$ and $\kappa_l\in\N$ and $\vartheta_l\in C^0(M,\R_{>0})$ by
\begin{equation} \label{kappadelta} \begin{split}
\alpha_l &\define \min\bigSet{a_i}{i\in\set{0,\dots,l-1}} \,,\\
\kappa_l &\define \max\bigSet{k_i}{i\in\set{0,\dots,l-1}} \,,\\
\vartheta_l(x) &\define \max\bigSet{\theta_i(x)}{i\in\set{0,\dots,l-1}} \,.
\end{split} \end{equation}
This yields for all $l\in\N$ and $u\in C^\infty(M,\R_{\geq0})$:
\[
\tilde{\Phi}_l(u) \leq \sup\Set{ \e^{-\alpha_lu(y)}\;\vartheta_l(y)\cdot \left(1+\sum_{j=0}^{\kappa_l}\bigabs{\nabla_\eta^ju}_\eta(y)\right) }{y\in K_{l+1}\without K_{l-2}} \,.
\]

For $l\in\N$, we define
\[
\tilde{\lambda}_l \define \frac{\tilde{\eps}_l}{\sup\Set{\vartheta_l(y)}{y\in K_{l+1}\without K_{l-2}}} \in\R_{>0} \,.
\]
We choose a monotonically decreasing sequence $(\lambda_i)_{i\in\N}$ in $\R_{>0}$ with $\forall i\in\N\colon \lambda_i<\tilde{\lambda}_i$.

\smallskip
Due to \eqref{aim}, for any $u\in C^\infty(M,\R)$ which satisfies $u\geq\hat{w}$, the statement \eqref{pudelskern} is true if
\begin{equation} \label{aim2}
\forall i\in\N\colon \sup\Set{ \e^{-\alpha_iu(y)} \left(1+\sum_{j=0}^{\kappa_i}\bigabs{\nabla_\eta^ju}_\eta(y)\right) }{y\in K_{i+1}\without K_{i-2}} \leq \lambda_i \,.
\end{equation}

For each $i\in\N$, Lemma \ref{alpinist} yields an alpinist $\Theta_i=(\varphi_i[n])_{n\in\N}$ such that the set $G_{k_{i+1},\alpha_{i+1}}[\Theta_i]$ is bounded from above by some $C_i\in\R_{>0}$.

\smallskip
For $i\in\N$, we consider $L_i\define L_{i,\kappa_{i+1}}$. We define a sequence $b=(b_i)_{i\in\N}$ in $\N$ recursively by
\begin{equation} \label{bdef} \begin{split}
b_i \define \min\Big\{\; \beta\in\N \mathrel{}\Big|\mathrel{}
&\max\set{1+\beta,\; L_i\beta+C_iL_i} \leq \lambda_{i+1}\,\e^{\alpha_{i+1}\beta} \quad\AND\\[-0.2ex]
&\;\forall x\in K_i\without K_{i-1}\colon \hat{w}(x)\leq\beta \quad\AND\quad
\forall j\in\set{0,\dots,i-1}\colon b_j\leq\beta \;\Big\} \,;
\end{split} \end{equation}
this is well-defined because the set on the right-hand side is nonempty. By construction, $b$ is monotonically increasing. Hence the numbers $c_i\define b_{i+1}-b_i$ lie in $\N$.

\smallskip
We define a function $u\in C^\infty(M,\R)$ by
\[
u(x) \define \begin{cases}
b_i &\text{if $\exists i\in\N\colon x\in K_i\without(A_i\cup K_{i-1})$}\\
b_i +\varphi_i[c_i](\rho_i(x)) &\text{if $\exists i\in\N\colon x\in A_i$}
\end{cases} \;\;.
\]
Obviously $u$ is indeed a well-defined function. It is smooth because all $j$th derivatives with $j\geq1$ of $\varphi_i[c_i]$ vanish at $0$ and $1$ and because $b_i +\varphi_i[c_i](1) = b_i+c_i = b_{i+1}$.

Moreover, we have $u\geq\hat{w}>w$, because $\forall i\in\N\colon \forall x\in K_i\without K_{i-1}\colon u(x) \geq b_i \geq \hat{w}(x) > w(x)$.

\smallskip
Let $i\in\N$, let $y\in K_{i+1}\without K_{i-2}$. Then for some $\mu\in\set{-1,0,1}$, we have either $y\in K_{i+\mu}\without (A_{i+\mu}\cup K_{i+\mu-1})$ or $y\in A_{i+\mu}$. If $y\in K_{i+\mu}\without (A_{i+\mu}\cup K_{i+\mu-1})$, then the definition \eqref{bdef} of $b_{i+\mu}$ implies
\begin{equation} \label{aimpart1}
1+\sum_{j=0}^{\kappa_i}\bigabs{\nabla_\eta^ju}_\eta(y)
= 1 +b_{i+\mu}
\leq \lambda_{i+\mu+1}\,\e^{\alpha_{i+\mu+1}b_{i+\mu}}
\leq \lambda_i\,\e^{\alpha_iu(y)} \,,
\end{equation}
because $(\lambda_l)_{l\in\N}$ and $(\alpha_l)_{l\in\N}$ decrease monotonically and $u(y)=b_{i+\mu}\geq0$.

If $y\in A_{i+\mu}$, we consider $t\define\rho_{i+\mu}(y)$. Since $G_{\kappa_{i+\mu+1},\alpha_{i+\mu+1}}[\Theta_{i+\mu}]$ is bounded from above by $C_{i+\mu}$, Definition \ref{climbers} yields
\begin{equation} \label{Gstuff} \begin{split}
\e^{-\alpha_{i+\mu+1}\varphi_{i+\mu}[c_{i+\mu}](t)} \left(1+\sum_{j=0}^{\kappa_{i+\mu+1}}\abs{\varphi_{i+\mu}[c_{i+\mu}]^{(j)}(t)}\right)
&\leq \sup\left(G_{\kappa_{i+\mu+1},\alpha_{i+\mu+1}}[\Theta_{i+\mu}]\right)
\leq C_{i+\mu} \,.
\end{split} \end{equation}
By \eqref{kappadelta}, the sequence $(\kappa_l)_{l\in\N}$  increases monotonically, whereas $(\lambda_l)_{l\in\N}$ and $(\alpha_l)_{l\in\N}$ decrease monotonically. Since $\e^{\alpha_{i+\mu+1}\varphi_{i+\mu}[c_{i+\mu}](t)}\geq1$, we deduce from \eqref{bdef} and \eqref{Gstuff}:
\[ \begin{split}
\lambda_i\,\e^{\alpha_iu(y)}
&\;=\; \lambda_i\,\e^{\alpha_ib_{i+\mu}}\;\e^{\alpha_i\varphi_{i+\mu}[c_{i+\mu}](t)}\\
&\;\geq\; \lambda_{i+\mu+1}\,\e^{\alpha_{i+\mu+1}b_{i+\mu}}\;\e^{\alpha_{i+\mu+1}\varphi_{i+\mu}[c_{i+\mu}](t)}\\
&\;\geq\; L_{i+\mu}b_{i+\mu}\e^{\alpha_{i+\mu+1}\varphi_{i+\mu}[c_{i+\mu}](t)} +C_{i+\mu}L_{i+\mu}\;\e^{\alpha_{i+\mu+1}\varphi_{i+\mu}[c_{i+\mu}](t)}\\
&\;\geq\; L_{i+\mu}b_{i+\mu} +\frac{C_{i+\mu}L_{i+\mu}}{C_{i+\mu}} \left(1 +\sum_{j=0}^{\kappa_{i+\mu+1}}\abs{\varphi_{i+\mu}[c_{i+\mu}]^{(j)}(t)}\right)\\
&\;=\; L_{i+\mu}\cdot \left(1+b_{i+\mu}+\sum_{j=0}^{\kappa_{i+\mu+1}}\abs{\varphi_{i+\mu}[c_{i+\mu}]^{(j)}(t)}\right) \,.
\end{split} \]
Applying \eqref{chainrule} to the function $f\colon s\mapsto b_{i+\mu} +\varphi_{i+\mu}[c_{i+\mu}](s)$, we thus obtain
\begin{equation} \label{aimpart2} \begin{split}
1+\sum_{j=0}^{\kappa_i} \bigabs{\nabla^j_{\eta}u}_\eta(y)
&\;\leq\; 1+\sum_{j=0}^{\kappa_{i+\mu+1}} \bigabs{\nabla^j_{\eta}u}_\eta(y)
\;\leq\; L_{i+\mu}\cdot \left(1 +b_{i+\mu} +\sum_{j=0}^{\kappa_{i+\mu+1}}\abs{\varphi_{i+\mu}[c_{i+\mu}]^{(j)}(t)}\right)
\;\leq\; \lambda_i\,\e^{\alpha_iu(y)} \,.
\end{split} \end{equation}
The inequalities \eqref{aimpart1} and \eqref{aimpart2} imply that \eqref{aim2} is true for the function $u$ with $u\geq\hat{w}>w$ we have constructed. This shows already that there exists a function $u\in C^\infty(M,\R)$ with $u>w$ such that for every $i\in\N$, the inequality $\Phi_i(u)<\eps_i$ holds on $M\without K_i$.

Since $u$ satisfies by construction even
\[
\forall i\in\N\colon \sup\Set{ \e^{-\alpha_iu(y)} \left(1+\sum_{j=0}^{\kappa_i}\bigabs{\nabla_\eta^ju}_\eta(y)\right) }{y\in K_{i+1}\without K_{i-2}} < \tilde{\lambda}_i
\]
and $u>w$, there exists obviously a neighborhood $\mathcal{U}$ of $u$ in the fine $C^\infty$-topology on $C^\infty(M,\R)$ such that every $v\in\mathcal{U}$ satisfies $v>w$ and
\[
\forall i\in\N\colon \sup\Set{ \e^{-\alpha_iv(y)} \left(1+\sum_{j=0}^{\kappa_i}\bigabs{\nabla_\eta^jv}_\eta(y)\right) }{y\in K_{i+1}\without K_{i-2}} < \tilde{\lambda}_i \,.
\]
In particular, every $v\in\mathcal{U}$ satisfies $v>w$ and, for every $i\in\N$, $\Phi_i(v)<\eps_i$ on $M\without K_i$. Since real-analytic functions are fine-$C^\infty$-dense in $C^\infty(M,\R)$ (cf.\ e.g.\ \cite[Theorem A]{Shiga}), Theorem \ref{FlatzoomThemAll} is proved.
\end{proof}

Now we can prove our main result stated in the Introduction:
\begin{proof}[Proof of Theorem \ref{maingeneral}]
For $i\in\N$, consider the maps $\Psi_i,\Psi_i^\Fol,\Upsilon_i\colon C^\infty(M,\R) \to C^0(M,\R_{\geq0})$ defined by
\begin{align*}
\Psi_i(u) &\define \abs{\nabla^i_{g_0[u]}\Riem_{g_0[u]}}_{h_0[u]} \,,\\
\Psi_i^\Fol(u) &\define \abs{\nabla^i_{(g_0)_\Fol[u]}\Riem_{(g_0)_\Fol[u]}}_{(h_0)_\Fol[u]} \,,
&\Upsilon_i(u) &\define \abs{\nabla^i_{g_0[u]}\SecondFF^\Fol_{g_0[u]}}_{h_0[u]} \,.
\end{align*}
The Examples \ref{ex2}, \ref{ex3}, \ref{ex4} show that $\Psi_i$, $\Psi_i^\Fol$, $\Upsilon_i$ are flatzoomers. By \ref{quasiex1}, they are quasi-flatzoomers for $\Kex$. For $i\in\N$, we define $\Phi_i\colon C^\infty(M,\R) \to C^0(M,\R_{\geq0})$ by $\Phi_i(u)\define \Psi_i(u)+\Psi_i^\Fol(u)+\Upsilon_i(u)$. Example \ref{quasiex2} (see also \ref{ex5}) tells us that $\Phi_i$ is a quasi-flatzoomer.

Theorem \ref{FlatzoomThemAll}, applied to the sequence $(\Phi_i)_{i\in\N}$, yields a real-analytic function $u\colon M\to\R$ with $u>u_0$ such that for every $i\in\N$, the inequality $\Phi_i(u) < \eps_i$ holds on $M\without K_i$. Thus the statements \ref{gen1}, \ref{gen3}, \ref{gen5} of Theorem \ref{maingeneral} are true. If $(g_0)_\Fol$ (and thus also $g_0$) is not Riemannian, the proof of \ref{maingeneral} is now complete.

\smallskip
Otherwise we define a smooth compact exhaustion $\Kex'=(K_i')_{i\in\N}$ by $K_0'\define\leer$ and $\forall i\geq1\colon K_i'\define K_{i-1}$, define $(\eps_i')_{i\in\N}$ by $\eps_0'\define \frac{1}{\iota+1}$ and $\forall i\geq1\colon \eps_i'\define \eps_{i-1}$, and define $\forall i\geq1\colon \Phi_i'\define\Phi_{i-1}$. If $(g_0)_\Fol$, but not $g_0$, is Riemannian, then we consider $\Phi_0'\colon u\mapsto 1/\conv_{(g_0)_\Fol[u]}^\Fol$, which is a quasi-flatzoomer due to Theorem \ref{RiemannInj}. If $g_0$ is Riemannian, we consider $\Phi_0'\colon u\mapsto 1/\conv_{(g_0)_\Fol[u]}^\Fol +1/\conv_{g[u]}$, which is a quasi-flatzoomer due to Theorem \ref{RiemannInj} (applied also to the foliation whose only leaf is $M$) and Example \ref{quasiex2}.

Now Theorem \ref{FlatzoomThemAll}, applied to $\Kex'$ and $(\Phi_i')_{i\in\N}$ and $(\eps_i')_{i\in\N}$, shows that all statements of Theorem \ref{maingeneral} are true, because the convexity radii are by construction $\geq\iota+1\geq1$, which implies in particular completeness of the metrics. By \cite[Proposition IX.6.1]{Chavel}, this yields also the inequalities $\inj\geq2\conv$ (note that Chavel's $\conv$ is a priori $\geq$ the one we have defined at the beginning of \S\ref{radii}).
\end{proof}

The other results stated in Section \ref{intro} follow from Theorem \ref{maingeneral}, as explained there.

\bigskip
We end this article by stating explicitly, for future use elsewhere, one result about ordinary differential inequalities that has essentially been derived during the proof of Theorem \ref{FlatzoomThemAll}.

\begin{theorem} \label{OD}
Let $(\eps_i)_{i\in\N}$ and $(\alpha_i)_{i\in\N}$ be sequences in $\R_{>0}$, let $(m_i)_{i\in\N}$ be a sequence in $\N$, let $(P_i)_{i\in\N}$ be a sequence such that each $P_i$ is a real polynomial (whose degree may depend on $i$) in $m_i+1$ real variables. Let $w\in C^0(\coi{0}{\infty},\R)$. Then there exists a number $\mu\in\R$ such that for every $u_0\in\coi{\mu}{\infty}$, there is a function $u\in C^\infty(\coi{0}{\infty},\R)$ with the following properties:
\begin{enumerate}[label=(\roman*)]
\item\label{OD1} $u(0) = u_0$.
\item\label{OD2} For each $i\in\N$, \;$u$ is constant on the interval $\cci{i}{i+\tfrac{1}{2}}$.
\item\label{OD3} $u>w$.
\item\label{OD4} $\forall i\in\N\colon \forall x\in\cci{i}{i+1}\colon P_i\left(u(x),u'(x),\dots,u^{(m_i)}(x)\right) < \eps_i\e^{\alpha_iu(x)}$.
\end{enumerate}
\end{theorem}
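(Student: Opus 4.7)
The plan is to construct $u$ as a step-and-climb function: constant equal to some value $b_i$ on each plateau $\cci{i}{i+1/2}$, then climbing on each interval $\cci{i+1/2}{i+1}$ to a new plateau value $b_{i+1}$ via a smoothly flat-joined bump built from the alpinist of Lemma \ref{alpinist}. The driving idea is that the polynomial inequality \ref{OD4} is dominated by the exponential factor $\e^{\alpha_iu}$, so it is enough that the derivatives of $u$ grow no faster than such an exponential --- which is precisely what the alpinist delivers.

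Since each $P_i$ is a polynomial, fix $A_i\geq 1$ and $d_i\in\N_{\geq 1}$ with $\bigabs{P_i(y_0,\dots,y_{m_i})}\leq A_i(1+\smallabs{y_0}+\dots+\smallabs{y_{m_i}})^{d_i}$; setting $\beta_i\define\alpha_i/d_i$ and $B_i\define(\eps_i/A_i)^{1/d_i}$, property \ref{OD4} is implied by the simpler pointwise inequality $1+\smallabs{u}+\sum_{j=1}^{m_i}\bigabs{u^{(j)}}<B_i\e^{\beta_iu}$ on $\cci{i}{i+1}$. For each $i$, Lemma \ref{alpinist} applied with $a=\beta_i$ and $k=m_i$ yields an alpinist $\Theta_i=(\phi_{i,n})_{n\in\N}$ and a constant $C_i\in\R_{>0}$ such that $1+\sum_{j=0}^{m_i}\bigabs{\phi_{i,n}^{(j)}(t)}\leq C_i\e^{\beta_i\phi_{i,n}(t)}$ uniformly in $n\in\N$ and $t\in\cci{0}{1}$. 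Reparametrizing via $\psi_{i,n}(s)\define\phi_{i,n}(2s)$ for $s\in\cci{0}{1/2}$, we obtain climbers with $\psi_{i,n}(0)=0$, $\psi_{i,n}(1/2)=n$, vanishing positive-order derivatives at both endpoints, and $1+\sum_{j=0}^{m_i}\bigabs{\psi_{i,n}^{(j)}(s)}\leq 2^{m_i}C_i\e^{\beta_i\psi_{i,n}(s)}$.

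Now choose the heights $b_i$ recursively: set $b_0\define u_0$, and given $b_i$ take $c_i\in\N_{\geq 1}$ to be the smallest positive integer for which $b_{i+1}\define b_i+c_i$ satisfies both
\textit{(a)} $b_{i+1}>\sup_{\cci{i+1}{i+2}}w$ and \textit{(b)} $b_{i+1}+2^{m_{i+1}}C_{i+1}<B_{i+1}\e^{\beta_{i+1}b_{i+1}}$; such $c_i$ exists because the right-hand side of (b) grows exponentially in $b_{i+1}$ while $w$ is bounded on each compact interval. Pick $\mu\in\R$ large enough that $u_0\geq\mu$ already guarantees the analogues of (a) and (b) for index $0$. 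Finally define $u(x)\define b_i$ for $x\in\cci{i}{i+1/2}$ and $u(x)\define b_i+\psi_{i,c_i}(x-i-1/2)$ for $x\in\cci{i+1/2}{i+1}$; the vanishing of all positive-order derivatives of $\psi_{i,c_i}$ at its endpoints makes $u\in C^\infty(\coi{0}{\infty},\R)$, and properties \ref{OD1}, \ref{OD2} hold by construction. Property \ref{OD3} follows from $u\geq b_i>\sup_{\cci{i}{i+1}}w$ on $\cci{i}{i+1}$. For \ref{OD4}, on each plateau the simpler inequality reduces to $1+b_i<B_i\e^{\beta_ib_i}$, which is weaker than (b); on the ascent, with $t\define x-i-1/2$ and using $\psi_{i,c_i}(t)\geq 0$,
\[
1+u(x)+\sum_{j=1}^{m_i}\bigabs{u^{(j)}(x)}\;\leq\;b_i+2^{m_i}C_i\e^{\beta_i\psi_{i,c_i}(t)}\;\leq\;B_i\e^{\beta_ib_i}\e^{\beta_i\psi_{i,c_i}(t)}\;=\;B_i\e^{\beta_iu(x)}.
\]

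The main obstacle is conceptual rather than computational: it is precisely the control of every $j$-th derivative ($j\leq m_i$) of the climbers $\phi_{i,n}$ by a single exponential in $\phi_{i,n}$, uniformly in the target height $n$, that makes the scheme work --- and the naive choice $\phi_n=n\phi_1$ fails (Remark \ref{alpineremark}). Lemma \ref{alpinist} is the analytic heart of the matter; once it is in hand, the remainder is just exponential-beats-polynomial bookkeeping.
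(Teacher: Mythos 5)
Your proof is correct and follows essentially the same route as the paper's: the paper's own argument is precisely a specialization of the proof of Theorem \ref{FlatzoomThemAll} to $[0,\infty)$ with collars $A_i=\cci{i+\frac12}{i+1}$, i.e.\ the same plateau-and-climb construction driven by Lemma \ref{alpinist} and a recursive, exponentially dominated choice of the plateau heights $b_i$. The only (harmless) points to make explicit are that $\mu\geq 0$ so that all $b_i\geq 0$ (needed for $b_i+2^{m_i}C_i\e^{\beta_i\psi}\leq(b_i+2^{m_i}C_i)\e^{\beta_i\psi}$) and that $C_i\geq1$ (so the plateau inequality really is weaker than (b)), both of which hold automatically.
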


\noindent
\emph{Remark 1.} In particular, the ordinary differential inequality \ref{OD4} can be solved globally with initial values $u(0)$ and $\forall i\geq1\colon u^{(i)}(0)=0$ whenever $u(0)$ is sufficiently large. In contrast, the results of \cite{MN} show that even in simple special cases, the inequality \ref{OD4} can\emph{not} be solved with $\forall i\geq1\colon u^{(i)}(0)=0$ for arbitrary initial values $u(0)$ that satisfy $P_0(u(0),0,\dots,0) < \eps_0\e^{\alpha_0u(0)}$ (the properties \ref{OD2}, \ref{OD3} do not matter for this conclusion).

\medskip\noindent
\emph{Remark 2.} The polynomials $P_i$ are assumed to have constant coefficients here, for simplicity. But since they may depend on the interval $\cci{i}{i+1}$, an inequality of the form
\[
\forall x\in\coi{0}{\infty}\colon P(x)\left(u(x),u'(x),\dots,u^{(m)}(x)\right) < \eps(x)\e^{\alpha(x)u(x)} \,,
\]
for a polynomial-valued function $P\in C^0\big(\coi{0}{\infty},\Poly{m+1}{d}\big)$ and functions $\eps,\alpha\in C^0(\coi{0}{\infty},\R_{>0})$, can always be strengthened to an inequality of the form \ref{OD4} and can then be solved using the theorem.

\begin{proof}[Sketch of proof of Theorem \ref{OD}]
For $M\define\coi{0}{\infty}$, we consider the smooth compact exhaustion $(K_i)_{i\in\N}$ with $K_i\define\cci{0}{i+1}$; this $M$ is a manifold-with-boundary, but the boundary does not cause any problem. After replacing $P_i$ by $P_i^2+1$ if necessary, we may assume that all $P_i$ are $\geq0$. The maps $\Phi_i\colon C^\infty(M,\R)\to C^\infty(M,\R_{\geq0})$ given by $\Phi_i(u)(x)\define \e^{-\alpha_i u(x)} P_i\left(u(x),u'(x),\dots,u^{(m_i)}(x)\right)$ are obviously (quasi-)flatzoomers. Revisiting the proof of Theorem \ref{FlatzoomThemAll} for our given data $(\Phi_i)_{i\in\N}$, $(\eps_i)_{i\in\N}$, $w$, we choose the interior collar neighborhoods $A_i = \cci{i+\frac{1}{2}}{i+1}$. Clearly there exists a number $\mu\in\R$ such that for every $u_0\in\coi{\mu}{\infty}$, we can choose the sequence $b$ with $b_0=u_0$. The constructed function $u\in C^\infty(M,\R)$ satisfies \ref{OD1}--\ref{OD4}.
\end{proof}

%%%%%%%%%%%%%%%%%%%%%%%%%%%%%%%%%%%%%%%%%%%%%%%%%%%%%%%%%%%%%%%%%%%%%%%%%%%%%%
%%%%%%%%%%%%%%%%%%%%%%%%%%%%%%%%%%%%%%%%%%%%%%%%%%%%%%%%%%%%%%%%%%%%%%%%%%%%%%

\end{document}